\newcommand{\vv}{\vert \vert}
\newcommand{\leftv}{\left\vert}
\newcommand{\rightv}{\right\vert}
\newcommand{\vvh}{\vert \vert_{\operatorname{Hof}}}
\newcommand{\R}{\mathbb{R}}
\newcommand{\N}{\mathbb{N}}
\newcommand{\Z}{\mathbb{Z}}
\newcommand{\C}{\mathbb{C}}
\newcommand{\CP}{\mathbb{CP}}
\newcommand{\supp}{\operatorname{supp}}
\newcommand{\Homeo}{\operatorname{Homeo}}
\newcommand{\Ham}{\operatorname{Ham}}
\newcommand{\tHam}{\widetilde{\operatorname{Ham}}}
\newcommand{\bHam}{\overline{\operatorname{Ham}}}
\newcommand{\Id}{\operatorname{Id}}
\newcommand{\Spec}{\operatorname{Spec}}
\newcommand{\Sym}{\operatorname{Sym}}
\newcommand{\Ker}{\operatorname{Ker}}
\newcommand{\im}{\operatorname{Im}}
\newcommand{\osc}{\operatorname{osc}}
\newcommand{\Cal}{\operatorname{Cal}}
\newcommand{\Hameo}{\operatorname{Hameo}}
\newcommand{\FHomeo}{\operatorname{FHomeo}}
\newcommand{\diam}{\operatorname{diam}}
\newcommand{\Area}{\operatorname{Area}}
\newtheorem{theorem}{Theorem}
\newtheorem*{question*}{Question}
\newtheorem{definition}[theorem]{Definition}
\newtheorem{lemma}[theorem]{Lemma}
\newtheorem{prop}[theorem]{Proposition}
\newtheorem*{lemma*}{Lemma}
\newtheorem*{theorem*}{Theorem}
\newtheorem*{remark*}{Remark}
\newtheorem*{definition*}{Definition}
\newtheorem{remark}[theorem]{Remark}
\theoremstyle{remark}
\newtheorem*{remarks*}{Remarks}
\theoremstyle{definition}
\newtheorem*{claim*}{Claim}
\newtheorem*{example*}{Examples}
\title{Hameomorphism groups of positive genus surfaces}
\author{Cheuk Yu Mak and Ibrahim Trifa}
\begin{document}

\maketitle

\begin{abstract}
    In their previous works \cite{CGHMSS1,CGHMSS2}, Cristofaro-Gardiner, Humilière, Mak, Seyfaddini and Smith defined links spectral invariants on connected compact surfaces and used them to show various results on the algebraic structure of the group of area-preserving homeomorphisms of surfaces,  particularly in cases where the surfaces have genus zero.  We show that on surfaces with higher genus, for a certain class of links, the invariants will satisfy a local quasimorphism property. Subsequently, we generalize their results to surfaces of any genus. This extension includes the non-simplicity of (i) the group of hameomorphisms of a closed surface, and (ii) the kernel of the Calabi homomorphism inside the group of hameomorphisms of a surface with non-empty boundary. Moreover, we prove that the Calabi homomorphism extends (non-canonically) to the $C^0$-closure of the set of Hamiltonian diffeomorphisms of any surface. The local quasimorphism property is a consequence of a quantitative Künneth formula for a connected sum in Heegaard Floer homology, inspired by results of Ozsv{\'{a}}th and Szab{\'{o}}.

\end{abstract}

\tableofcontents

\section{Introduction}

Let $\Sigma$ be a compact connected orientable surface (possibly with boundary) equipped with an area from $\omega$.
In 1980's, Fathi defined the mass-flow homomorphism \cite{Fa80}
\[
\Homeo_c(\Sigma,\omega) \to \R
\]
from the group of area-preserving homeomorphisms supported in the interior of $\Sigma$ to $\R$.
Whether its kernel is a simple group was an open question for a long time and has recently been resolved negatively using techniques from symplectic geometry.
The case of the sphere was answered by \cite{CGHS20} using periodic Floer homology, building on the work of \cite{Hutchings} and \cite{CGHR}. The case of positive genus surfaces was answered
by \cite{CGHMSS1} using Lagrangian Floer theory, borrowing ideas from \cite{OS}, \cite{MS21} and \cite{PS21}.

Symplectic geometry enters the picture because $\omega$ is a symplectic form and the kernel of the mass-flow homomorphism can be identified with the $C^0$ closure of the group $\Ham(\Sigma)$ of Hamiltonian diffeomorphisms supported in the interior of $\Sigma$.
The Hofer metric, a bi-invariant and non-degenerate metric, on $\Ham(\Sigma)$ enables us to define two natural normal subgroups of $\overline \Ham(\Sigma)$, namely the group of hameomorphisms $\Hameo(\Sigma)$ and the group of finite energy homeomorphisms $\FHomeo(\Sigma)$ (see Section \ref{ss:subgroups} for the precise definitions, and also \cite{OM}, \cite{CGHS20} for more discussions). 
Indeed, the authors of \cite{CGHMSS1} show that the subgroup $\Hameo(\Sigma)$ is always a proper normal subgroup.

Since then, the method has been pushed further to answer more refined questions about the algebraic structure of $\overline \Ham(\Sigma)$, especially when $\Sigma$ has genus $0$, using a property called the {\it quasimorphism} property.
The goal of this paper is to generalize the results of \cite{CGHMSS2} to all surfaces even though we no longer have the quasimorphism property for positive genus surfaces.


\subsection{Link spectral invariants and known results for genus zero surfaces}

Link spectral invariants are introduced in \cite{CGHMSS1} as the main tool to study $\overline \Ham(\Sigma)$. Given a Lagrangian link (i.e. a union of disjoint circles) $\underline L = L_1\cup ... \cup L_k$ satisfying certain monotonicity conditions on a surface $(\Sigma,\omega)$, we can associate a spectral invariant $c_{\underline L} : C^\infty(S^1\times\Sigma,\mathbb R) \rightarrow \mathbb R$ which satisfies several useful properties (Proposition \ref{prop:invariants}).

In particular, the homotopy invariance permits to define $c_{\underline L}(\varphi)$ for $\varphi \in \widetilde{\Ham}(\Sigma)$, by the formula $c_{\underline L}(\{\varphi_H^t\}_{t \in [0,1]}) = c_{\underline L}(H)$ for a mean normalized Hamiltonian function $H$.
The  homogenization $\mu_{\underline L}$ of $c_{\underline L}$ is defined by 
\[
\mu_{\underline L}(\varphi)= \lim_{n \to \infty} \frac{c_{\underline L}(\varphi^n)}{n}.
\]

In the case of $\Sigma = S^2$, we have the following :

\begin{theorem}[Theorem 7.7 of \cite{CGHMSS1}]
\label{thm:qm}
$c_{\underline L} : \widetilde{\Ham}(S^2)\to\mathbb R$ is a quasimorphism with defect $D\leqslant \frac{k+1}{k}\lambda$ where $\lambda$ is the monotonicity constant of $\underline L$.
Moreover, $\mu_{\underline L}$ descends to a quasimorphism on $\Ham(S^2)$.
\end{theorem}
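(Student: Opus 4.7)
The plan is to deduce the quasimorphism property from the pair-of-pants product on Lagrangian Floer cohomology $HF^*(\underline L, \underline L)$ together with its action filtration. Writing $c_{\underline L}(H; \alpha)$ for the spectral invariant associated to a class $\alpha \in HF^*(\underline L, \underline L)$, the standard triangle inequality in spectral invariant theory yields
\[
c_{\underline L}(H\# K;\, \alpha \cup \beta) \;\leq\; c_{\underline L}(H;\alpha) + c_{\underline L}(K;\beta).
\]
Setting $\alpha=\beta=\mathbf 1$ (the unit, which defines $c_{\underline L}$) and using $\mathbf 1\cup\mathbf 1=\mathbf 1$ immediately gives subadditivity $c_{\underline L}(\varphi\psi)\leq c_{\underline L}(\varphi)+c_{\underline L}(\psi)$ for all $\varphi,\psi\in\tHam(S^2)$.

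For the reverse inequality I would exploit the Poincar\'e-type duality pairing on $HF^*(\underline L,\underline L;\Lambda)$. Let $\mathbf 1^\vee$ denote the class dual to $\mathbf 1$ under this pairing. Duality gives an inequality of the form $c_{\underline L}(H;\mathbf 1)+c_{\underline L}(\bar H;\mathbf 1^\vee)\geq 0$, and combining this with the triangle inequality applied to the factorization of the top class as $\mathbf 1\cup \mathbf 1^\vee$ produces a lower bound
\[
c_{\underline L}(\varphi\psi)\;\geq\; c_{\underline L}(\varphi)+c_{\underline L}(\psi)-D,
\]
where $D$ is the action gap between $\mathbf 1$ and $\mathbf 1^\vee$ inside $HF^*(\underline L,\underline L)$.

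The main obstacle is to compute $D$ and show the sharp bound $D\leq \tfrac{k+1}{k}\lambda$. For a monotone Lagrangian link with $k$ components on $S^2$, the Floer complex admits a combinatorial description in which the generators correspond to tuples of intersection points distributed among the $k+1$ complementary regions of $\underline L$; monotonicity forces the action of each generator to be a linear function of the area data of these regions. A direct computation in this model should then identify the spread between the minimal-action generator (the unit) and the maximal-action generator ($\mathbf 1^\vee$), producing the factor $\tfrac{k+1}{k}\lambda$. The technical core of the argument lies in this combinatorial identification, combined with ensuring that the chain-level triangle inequality is quantitatively sharp and that continuation-map errors do not degrade the bound.

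For the ``moreover'' part, the homogenization $\mu_{\underline L}$ of any quasimorphism is automatically a homogeneous quasimorphism and hence vanishes on torsion elements. Since the kernel of the projection $\tHam(S^2)\to\Ham(S^2)$ equals $\pi_1(\Ham(S^2))=\Z/2$, which is torsion, $\mu_{\underline L}$ factors through $\Ham(S^2)$ to yield the desired quasimorphism.
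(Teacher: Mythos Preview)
Your argument for the ``moreover'' clause is fine: $\pi_1(\Ham(S^2))\cong\Z/2$, and a homogeneous quasimorphism kills torsion, so $\mu_{\underline L}$ descends.

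The main body, however, has a genuine gap, and it also diverges from the route the paper points to. The paper does not reprove this theorem but indicates (Section~2.2) that it follows from the identification $\Sym^k(S^2)\cong\CP^k$, Entov--Polterovich's result that the \emph{Hamiltonian} spectral invariant $c$ is a quasimorphism on $\tHam(\CP^k)$ (a consequence of $QH^*(\CP^k)$ being a field), and the comparison $c_{\Sym\underline L}\leqslant c$ of Leclercq--Zapolsky. From these one gets $\gamma_{\underline L}(\varphi)=c_{\underline L}(\varphi)+c_{\underline L}(\varphi^{-1})\leqslant\tfrac1k\gamma(\varphi)$, and the Entov--Polterovich bound on $\gamma$ for $\CP^k$ produces the constant $\tfrac{k+1}{k}\lambda$.

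Your attempt to bypass the ambient invariant and work purely inside $HF^*(\underline L,\underline L)$ breaks at the duality step. Poincar\'e duality yields $c_{\underline L}(H;\mathbf 1^\vee)+c_{\underline L}(\bar H;\mathbf 1)\geqslant 0$, which is a \emph{lower} bound on $c_{\underline L}(\bar H;\mathbf 1)$; but controlling the defect requires an \emph{upper} bound on $\gamma_{\underline L}(H)=c_{\underline L}(H;\mathbf 1)+c_{\underline L}(\bar H;\mathbf 1)$. Your ``factorization of the top class as $\mathbf 1\cup\mathbf 1^\vee$'' is just the tautology $\mathbf 1\cdot\mathbf 1^\vee=\mathbf 1^\vee$ and does not close this gap. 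What is actually needed is a factorization in the opposite direction, namely a class $x$ with $\mathbf 1^\vee\cdot x=T^a\,\mathbf 1$ for some $a$, i.e.\ quantum invertibility of $\mathbf 1^\vee$. That invertibility is precisely what the field structure of $QH^*(\CP^k)$ supplies (via the module action on $HF^*(\Sym\underline L)$), and the defect is governed by the valuation of the inverse, not by any ``action gap between $\mathbf 1$ and $\mathbf 1^\vee$'' read off from generators. Your combinatorial computation of generator actions would not, by itself, recover this ring-theoretic input.
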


The fact that $\mu_{\underline L}$ is a quasimorphism and that we can quantify its defect is the key ingredient to prove the following results (the definition of $\Cal$ will be recalled in Section \ref{ss:subgroups}):

\begin{enumerate}
\item 
\label{thm:extension}
The Calabi homomorphism $\Cal:\Hameo(D^2) \to \mathbb R$ can be extended to $\overline{\Ham}(D^2) = \Homeo(D^2,\omega)$. (\cite[Theorem 1.9]{CGHMSS2})

\item
\label{thm:notsimple}
$\Ker(\Cal)\cap \Hameo(D^2)$ is not simple. (\cite[Theorem 1.3]{CGHMSS2})


\item
    \label{thm:hameo}
    $\Hameo(S^2)$ is not simple. (\cite[Theorem 1.3]{CGHMSS2})

\end{enumerate}

\subsection{Main results for positive genus surfaces}

The purpose of this paper is to generalize (\ref{thm:extension}) and (\ref{thm:notsimple}) to any compact oriented surface $\Sigma$ (of any genus) with non-empty boundary:
\begin{theorem}
\label{thm:extensiongeneral}
The Calabi homomorphism $\Hameo(\Sigma) \to \mathbb R$ can be extended to $\overline{\Ham}(\Sigma)$.
\end{theorem}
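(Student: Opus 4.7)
The plan is to emulate the strategy of \cite[Theorem 1.9]{CGHMSS2} for the disk, substituting the local quasimorphism property proved in the body of the paper for the global quasimorphism property of Theorem~\ref{thm:qm} that was available in the genus zero case. In that argument one constructs an extension $\widetilde{\Cal}$ as a limit $\lim_n a_n \cdot \mu_{\underline L_n}$ over a sequence of monotone Lagrangian links $\underline L_n$ that become equidistributed in $\Sigma$, and the global quasimorphism combined with Hofer-Lipschitzness upgrades each $\mu_{\underline L_n}$ to a $C^0$-continuous functional on $\Ham$, yielding a canonical extension to $\overline{\Ham}$. On higher genus surfaces the invariant $\mu_{\underline L_n}$ is no longer a quasimorphism, so we expect only a non-canonical extension, in agreement with the statement of the theorem.

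The first step is to produce a sequence of monotone Lagrangian links $\underline L_n \subset \Sigma$ whose complement is a disjoint union of topological disks of shrinking area, with the components arranged so that the hypotheses of the local quasimorphism theorem hold. Since $\partial \Sigma \neq \emptyset$, mean-normalization is unnecessary, and a Weyl-type integration identity in the spirit of \cite{CGHMSS1} should yield
\[
\lim_{n \to \infty} a_n \cdot \mu_{\underline L_n}(\varphi_H^1) = \Cal(\varphi_H^1)
\]
for every compactly supported $H \in C^\infty(S^1 \times \mathrm{int}(\Sigma), \mathbb R)$, with $a_n$ an explicit normalization constant depending on the monotonicity of $\underline L_n$.

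Next, each $\mu_{\underline L_n}$ must be extended from $\Ham(\Sigma)$ to $\overline{\Ham}(\Sigma)$. Without a global quasimorphism this cannot be achieved by direct $C^0$-continuity; instead, for $\varphi \in \overline{\Ham}(\Sigma)$ one picks a $C^0$-approximation $\varphi_k \in \Ham(\Sigma)$ and defines $\widetilde{\mu_{\underline L_n}}(\varphi)$ as the limit of $\mu_{\underline L_n}(\varphi_k)$ along a non-principal ultrafilter $\mathcal U_1$. Hofer-Lipschitzness together with a Hofer-completion argument guarantees the limit is finite, while the local quasimorphism bounds the resulting defects for pairs of elements supported in a single disk of $\Sigma \setminus \underline L_n$. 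Finally set $\widetilde{\Cal}(\varphi) := \lim_{n \to \mathcal U_2} a_n \cdot \widetilde{\mu_{\underline L_n}}(\varphi)$ along a second ultrafilter; the formula of the previous paragraph ensures $\widetilde{\Cal}|_{\Ham(\Sigma)} = \Cal$, and the homomorphism property follows because the bounded defects are killed by the factor $a_n \to 0$.

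The main obstacle is the middle step: controlling the defect of $\mu_{\underline L_n}$ on arbitrary pairs in $\overline{\Ham}(\Sigma)$ using only the local quasimorphism property. The subtlety is that the local property yields defect bounds only for Hamiltonians concentrated in a single disk of $\Sigma \setminus \underline L_n$, so the passage from fragmented to unfragmented pairs must be carried out in a way compatible with the link decomposition. I expect this is precisely where the quantitative K\"unneth formula proved in the Heegaard Floer portion of the paper enters most directly: it should deliver defect estimates that are sufficiently uniform in $n$ to be killed by the factor $a_n \to 0$ in the final limit.
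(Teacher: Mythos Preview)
Your outline has the right shape---a sequence of equidistributed links, a Calabi asymptotic, a non-canonical extension via some choice procedure---but the middle step as written does not work. You propose to extend $\mu_{\underline L_n}$ to $\overline{\Ham}(\Sigma)$ by choosing a $C^0$-approximating sequence $\varphi_k \in \Ham(\Sigma)$ and taking an ultralimit of $\mu_{\underline L_n}(\varphi_k)$, with Hofer-Lipschitzness ``guaranteeing the limit is finite.'' But Hofer-Lipschitzness gives no control over a $C^0$-convergent sequence: elements that are $C^0$-close can be at arbitrarily large Hofer distance, so the sequence $\mu_{\underline L_n}(\varphi_k)$ need not be bounded, the ultralimit need not be finite, and even if it were there would be no reason for it to be independent of the approximating sequence. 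The paper avoids this entirely by working with the unhomogenized invariant and using the nontrivial fact (imported from \cite{CGHMSS1}) that $f_{\underline L^k} := c_{\underline L^k} + \Cal$ is genuinely $C^0$-uniformly continuous on $\Ham(\Sigma)$, hence extends canonically to $\overline{\Ham}(\Sigma)$. No ultrafilter is needed at this stage; the non-canonical choice enters only at the very end, via a Zorn's-lemma projection $\mathbb R^{\mathbb N}/\!\sim\,\to\mathbb R$.

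You also leave the fragmentation step as an unresolved ``main obstacle,'' but this is where the actual content lies. The paper's argument is: to show the sequence $(f_{\underline L^k}(\varphi))_k$ defines a homomorphism into $\mathbb R^{\mathbb N}/\!\sim$, it suffices to prove $\gamma_{\underline L^k}(\varphi) := f_{\underline L^k}(\varphi) + f_{\underline L^k}(\varphi^{-1}) \to 0$ for every $\varphi \in \overline{\Ham}(\Sigma)$. One fragments $\varphi = \varphi_1 \cdots \varphi_n$ with each $\varphi_i$ supported in a disk $D_i$, then conjugates each $\varphi_i$ by a fixed smooth $\psi_i$ moving $D_i$ off the non-contractible components $\underline\alpha$. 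The local quasimorphism property bounds $\gamma_{\underline L^k}(\psi_i\varphi_i\psi_i^{-1})$ by $\frac{k+1}{k(k+g)}$, and the Calabi asymptotic (applied to the \emph{smooth} elements $\psi_i$) forces $\gamma_{\underline L^k}(\psi_i) \to 0$. The K\"unneth formula is used only to establish the local quasimorphism and the spectral invariant properties, not as an additional ingredient at this stage.
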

\begin{theorem}
\label{thm:notsimplegeneral}
$\Ker(\Cal)\cap \Hameo(\Sigma)$ is not simple.
\end{theorem}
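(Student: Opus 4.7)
My plan is to transplant the disk-case argument of \cite[Theorem 1.3]{CGHMSS2} into an embedded disk in $\Sigma$, using the local quasimorphism property established in this paper in place of the global quasimorphism property (Theorem \ref{thm:qm}) that drives that argument on the sphere. Concretely, I would fix an embedded open disk $D$ in the interior of $\Sigma$ and choose a monotone Lagrangian link $\underline L \subset D$ of the type to which the local quasimorphism property applies. When restricted to Hamiltonians supported in $D$, that property forces $c_{\underline L}$ to be a quasimorphism with bounded defect, so its homogenization $\mu_{\underline L}$ is a genuine homogeneous quasimorphism on $\Ham_c(D) \subset \Ham(\Sigma)$, and by its Hofer-Lipschitz property it extends continuously to $\Hameo_c(D) \subset \Hameo(\Sigma)$.

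Next, I would import from \cite{CGHMSS2} the construction of a hameomorphism $h \in \Hameo_c(D) \cap \Ker(\Cal)$ for which the extended $\mu_{\underline L}(h)$ is nonzero: typically an infinite monotone twist or egg-beater type element realized as a Hofer-Cauchy limit of Hamiltonians with linearly growing spectral invariants. Since $h$ is supported in the interior disk $D$, it automatically lies in $\Ker(\Cal) \cap \Hameo(\Sigma)$. I would then follow the disk-case scheme to produce a proper nontrivial normal subgroup $N$ of $\Ker(\Cal) \cap \Hameo(\Sigma)$ that does not contain $h$: this $N$ arises either as the kernel of a homomorphism $\Ker(\Cal) \cap \Hameo(\Sigma) \to \R$ extracted from $\mu_{\underline L}$, or as the normal closure of a class of finite-energy hameomorphisms on which the extended $\mu_{\underline L}$ stays bounded. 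In either case, nontriviality is witnessed by explicit supported examples such as commutators of Hamiltonians, while properness is forced by $\mu_{\underline L}(h) \neq 0$.

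The main obstacle I anticipate is the verification that $N$ really is normal in the full group $\Ker(\Cal) \cap \Hameo(\Sigma)$, not merely in $\Hameo_c(D)$. Because only a local, and not a global, quasimorphism is available, $\mu_{\underline L}$ is not a priori conjugation-invariant under elements whose support leaves $D$ and carries $\underline L$ around the surface. I expect to handle this by exploiting the positive genus of $\Sigma$: any such conjugation can be decomposed into factors whose supports are either disjoint from $\underline L$ or carry $\underline L$ to a link disjoint from itself inside a handle, and in either case the local quasimorphism property (via the K\"unneth-type estimate underlying its proof in this paper) controls the resulting defect $|\mu_{\underline L}(g h g^{-1}) - \mu_{\underline L}(h)|$. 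Combined with the homogeneous quasimorphism property on $\Ham_c(D)$, this should suffice to make $N$ genuinely normal in the ambient group and hence to conclude non-simplicity.
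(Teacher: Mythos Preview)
Your proposal has a genuine gap: it tries to run the argument with a \emph{single} link and a single homogenized invariant $\mu_{\underline L}$, whereas the paper's proof (following \cite{CGHMSS2}) uses an \emph{equidistributed sequence} $(\underline L^k)_k$ of admissible links and defines the normal subgroup by an \emph{asymptotic} condition
\[
N\bigl((\underline L^k)_k\bigr)=\Bigl\{\varphi\in\Hameo(\Sigma)\;:\;\sqrt{k}\,\bigl(f_{\underline L^k}(\varphi)-\Cal(\varphi)\bigr)\text{ is bounded}\Bigr\},\qquad f_{\underline L^k}=c_{\underline L^k}+\Cal.
\]
A single $\mu_{\underline L}$ cannot separate the infinite twist $T$ from smooth elements: $\mu_{\underline L}(T)$ is just a real number, and many smooth diffeomorphisms also have nonzero $\mu_{\underline L}$, so neither ``kernel of a homomorphism extracted from $\mu_{\underline L}$'' nor ``elements on which $\mu_{\underline L}$ stays bounded'' defines a meaningful proper subgroup. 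In the paper, smooth elements lie in $N$ because the Calabi property (Proposition~\ref{prop:Calabi}) gives $f_{\underline L^k}(\varphi)-\Cal(\varphi)=O(\diam\underline L^k)=O(k^{-1/2})$, while for the explicitly constructed twist $T$ one computes $\sqrt{k}\,(f_{\underline L^k}(T)-\Cal(T))\to-\infty$; this is an asymptotic distinction, invisible to any single invariant.

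Your worry about conjugation invariance is also misplaced. The homogenized invariants $\mu_{\underline L}$ \emph{are} conjugation invariant on all of $\Ham(\Sigma)$; the obstruction is not normality but the \emph{subgroup} property, since by Proposition~\ref{p:notqm} the $c_{\underline L}$ are never quasimorphisms on the whole group. The paper handles this via Lemma~\ref{lem:gamma}: for every $\varphi\in\bHam(\Sigma)$ one has $\sqrt{k}\,\gamma_{\underline L^k}(\varphi)$ bounded, proved by fragmenting $\varphi$ into disk-supported pieces, conjugating each piece away from the non-contractible components $\underline\alpha$ by fixed smooth maps $\psi_i$, applying the local quasimorphism bound (Theorem~\ref{thm:localqm}) to the conjugated pieces, and using the Calabi property on the smooth $\psi_i$. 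This $\gamma$-bound, together with the triangle inequality, is exactly what makes $N$ closed under products, inverses, and conjugation. Finally, note that the admissible links $\underline L^k$ are not contained in $D$: they have non-contractible components $\alpha_1,\dots,\alpha_g$ spread over the handles of $\Sigma$, and $D$ is a disk disjoint from $\underline\alpha$; your description ``choose a monotone Lagrangian link $\underline L\subset D$'' does not match the setup of Theorem~\ref{thm:localqm}.
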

and generalize (\ref{thm:hameo}) to any connected closed oriented surface $(\Sigma,\omega)$:

\begin{theorem}
    \label{thm:hameogeneral}
    $\Hameo(\Sigma,\omega)$ is not simple.
\end{theorem}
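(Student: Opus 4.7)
The plan is to adapt the proof of non-simplicity of $\Hameo(S^2)$ from \cite{CGHMSS2} to positive genus surfaces, substituting the local quasimorphism property established earlier in the paper for the global quasimorphism property (Theorem \ref{thm:qm}) used in the sphere case. The guiding idea is to promote the extended Calabi homomorphism of Theorem \ref{thm:extensiongeneral} on an embedded disk to a non-trivial invariant on all of $\Hameo(\Sigma,\omega)$ whose kernel is a proper normal subgroup.

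\textbf{Step 1 (Setup).} Fix an embedded open disk $D \hookrightarrow \Sigma$ with $\omega(D) < \omega(\Sigma)$, giving an inclusion $\Hameo_c(D) \hookrightarrow \Hameo(\Sigma,\omega)$. Choose a sequence of admissible Lagrangian links $\{\underline L_n\}$ supported in $\Sigma \setminus D$ that satisfies the hypotheses of the local quasimorphism property and becomes finer in a controlled sense, analogous to the equidistributing links used in \cite{CGHMSS2}.

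\textbf{Step 2 (Building the invariant).} Using the homogenized link spectral invariants $\mu_{\underline L_n}$, define a function $\bar\mu : \Hameo(\Sigma,\omega) \to \R$ as a suitable limit. The local quasimorphism property is designed exactly to bound the defect $\mu_{\underline L_n}(\phi\psi) - \mu_{\underline L_n}(\phi) - \mu_{\underline L_n}(\psi)$ when one of the factors is supported in a prescribed region compatible with $\underline L_n$. Combined with the Hofer-Lipschitz and $C^0$-continuity properties of the spectral invariants, this should make $\bar\mu$ well-defined (up to a constant) and allow us to arrange that, on $\Hameo_c(D)$, $\bar\mu$ recovers the extended Calabi of Theorem \ref{thm:extensiongeneral} after a suitable normalization.

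\textbf{Step 3 (A proper normal subgroup).} The invariant $\bar\mu$ is conjugation-invariant up to a bounded error by construction, so $N := \{\phi \in \Hameo(\Sigma,\omega) : \bar\mu(\phi) = 0\}$, or more precisely its "bounded" analogue, is a normal subgroup. To see $N$ is proper, pick $\phi \in \Hameo_c(D) \subset \Hameo(\Sigma,\omega)$ with non-trivial extended Calabi (available from Theorem \ref{thm:extensiongeneral}); then $\bar\mu(\phi) \neq 0$. To see $N$ is non-trivial, one uses the support property of spectral invariants: any element supported in a disk disjoint from $D$ and from all $\underline L_n$ (which exists by genericity) lies in $N$.

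\textbf{Main obstacle.} The principal difficulty is showing that $\bar\mu$ descends to a well-defined function on all of $\Hameo(\Sigma,\omega)$, not merely on a class of "localized" elements. In the sphere case the global quasimorphism property gives uniform Hofer continuity of $\mu_{\underline L}$, and hence an extension to the full $C^0$-closure of $\Ham$ via Hofer-Cauchy approximations. For positive genus we only have local control, so the extension requires a fragmentation argument: decomposing a hameomorphism into pieces supported in small regions, applying the local quasimorphism estimate to each factor, and summing the contributions. Ensuring that this sum is independent of the fragmentation up to a controlled defect, and that the resulting invariant is non-trivial in the right place, is the heart of the proof and is where the quantitative statement of the local quasimorphism property must be used most carefully.
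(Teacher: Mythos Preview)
Your proposal has a genuine structural gap: on a \emph{closed} surface there is no Calabi invariant, and the attempt to import one from an embedded disk via a limit $\bar\mu = \lim \mu_{\underline L_n}$ cannot succeed as stated. By the Calabi property (Proposition \ref{prop:Calabi}), for any smooth mean-normalized Hamiltonian $H$ one has $c_{\underline L^k}(H)\to 0$, hence also $\mu_{\underline L^k}(\varphi)\to 0$ for every smooth $\varphi$. So any limit of the $\mu_{\underline L_n}$ vanishes identically on $\Ham(\Sigma)$, and there is no reason it should be non-zero on your $\phi\in\Hameo_c(D)$: the ``extended Calabi'' of Theorem \ref{thm:extensiongeneral} lives on surfaces with boundary and does not transfer to an invariant on $\Hameo(\Sigma)$ via inclusion. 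Your setup compounds this by placing the links in $\Sigma\setminus D$, which kills any sensitivity to what happens in $D$ (Lagrangian control forces $c_{\underline L}$ to be zero on Hamiltonians supported away from the link); for the Calabi property you need the contractible components to equidistribute over the \emph{whole} surface, including $D$.

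The paper's proof takes a quite different route. Rather than a single real-valued invariant, it defines the normal subgroup by a growth condition on a \emph{sequence} of numbers. Since there is no $\Cal$ to subtract, it replaces $c_{\underline L^k}-\Cal$ by a difference of two spectral invariants at different scales,
\[
g_k:=c_{\underline L^{2^{2k}}}-c_{\underline L^{2^{2k-N}}},
\]
and sets $N((\underline L^k)):=\{\varphi\in\Hameo(\Sigma):\ (2^k g_k(\varphi))_k \text{ is bounded}\}$. Normality follows from the triangle inequality together with the bound $\sqrt{k}\,\gamma_{\underline L^k}(\varphi)=O(1)$ of Lemma \ref{lem:gamma} (this is where fragmentation and the local quasimorphism property actually enter, not in the definition of $\bar\mu$). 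Properness is shown not abstractly but by an explicit computation: the infinite-twist hameomorphism $T$ built from a radial Hamiltonian with a power-law singularity satisfies $2^k g_k(T)\to\infty$, via the two-sided estimates on $c_{\underline L^k}(H)-\int H\omega$ obtained in the proof of Theorem \ref{thm:notsimplegeneral}. The choice of the exponent $a$ and the integer $N$ is tuned precisely so that the upper estimate at scale $2^{2k-N}$ beats the lower estimate at scale $2^{2k}$. None of these ingredients --- the difference-of-scales trick replacing $\Cal$, the growth-rate definition of the subgroup, and the explicit element $T$ --- appear in your outline, and they are the heart of the argument.
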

Theorem \ref{thm:notsimplegeneral} and \ref{thm:hameogeneral} together answer a question in \cite[Problem 4]{OM} for all surfaces.

There is a fundamental difference between the genus $0$ and positive genus case: $c_{\underline L}$ and $\mu_{\underline L}$ are never  quasimorphisms for positive genus surfaces for any $\underline L$ (cf. Proposition \ref{p:notqm}). 
To remedy this, we need to prove a local version of the quasimorphism property when $\Sigma$ has positive genus and combine it with the fragmentation technique.
This requires a slightly different class of Lagrangian links (see Definition \ref{d:admissible}) than those in \cite{CGHMSS1}.
We define the spectral invariants $c_{\underline L}$ for this new class of links, show that they satisfy all the usual spectral invariant properties listed in Proposition \ref{prop:invariants}, as well as the following {\it local quasimorphism} property.

\begin{theorem}
\label{thm:localqm}
    Let $\underline L$ be an admissible link with $k$ contractible components, with monotonicity constant $\lambda$ (see Definition \ref{d:admissible}).
    Let $D\subset \Sigma$ be a disk that does not intersect the non-contractible components of $\underline L$, and denote by $\Ham_D(\Sigma)$ the Hamiltonian diffeomorphisms supported in $D$.
    Then, the restriction of $c_{\underline L}$ to $\Ham_D(\Sigma)$ is a quasimorphism with defect bounded by $\frac{k+1}{k+g}\lambda$.
\end{theorem}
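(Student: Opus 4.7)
The plan is to combine the chain-level quasimorphism argument from the genus-zero case (Theorem~\ref{thm:qm}) with a connected-sum splitting of the filtered Heegaard Floer complex associated to $\underline L$. Heuristically, a global quasimorphism on $\tHam(\Sigma)$ fails because the Floer complex of $\underline L$ carries extra generators coming from the $g$ handles of $\Sigma$ that are sensitive to arbitrary Hamiltonian isotopies; but for $\varphi \in \Ham_D(\Sigma)$ these handle generators should be frozen, leaving a quasimorphism whose defect is even smaller than in the genus-zero case, by virtue of the extra ``room'' that the handle contributions provide in the action filtration.

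First, I would view $\Sigma$ as a connected sum $\Sigma = \Sigma_1 \# \Sigma_2$, where $\Sigma_1$ is a sphere (or disk, if $\partial\Sigma \ne \emptyset$) containing $D$ and all contractible components of $\underline L$, while $\Sigma_2$ absorbs the non-contractible components together with the genus. I would then apply a quantitative K\"unneth formula for this decomposition. Qualitatively, this is the Ozsv{\'{a}}th--Szab{\'{o}} connected-sum theorem; the quantitative enhancement must preserve the action filtration so that, for $\varphi \in \Ham_D(\Sigma)$, the invariant $c_{\underline L}(\varphi)$ computes---up to a constant shift depending only on $\Sigma_2$---a spectral invariant built from the $\Sigma_1$-side factor of the tensor decomposition.

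Second, the chain-level triangle inequality argument of \cite{CGHMSS1} should transplant to the $\Sigma_1$-side. The numerator $k+1$ would come from the $k+1$ regions of $D \setminus \underline L$ that drive the local combinatorics of the triangle inequality. The improvement of the denominator from $k$ (as in the genus-zero bound $\tfrac{k+1}{k}\lambda$) to $k+g$ should come from the monotonicity structure of $\underline L$ in $\Sigma$ interacting with the K\"unneth factor on $\Sigma_2$: informally, the $g$ handles contribute additional generators to a distinguished summand of the full Heegaard Floer complex, which dilute the maximal height difference governing the defect.

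The main technical obstacle is the quantitative K\"unneth formula itself. Ozsv{\'{a}}th--Szab{\'{o}}'s original connected-sum argument uses a neck-stretching degeneration and yields only a chain homotopy equivalence, but for a statement at the level of spectral invariants one must carefully track how the degeneration interacts with Hamiltonian perturbations and with the monotonicity constant $\lambda$, so that the action filtration splits quantitatively along the tensor product. A secondary subtlety is that the admissibility condition of Definition~\ref{d:admissible} is broader than the class of links used in \cite{CGHMSS1}, so one must verify that the K\"unneth splitting remains compatible with the non-contractible components of $\underline L$ sitting on the $\Sigma_2$-side.
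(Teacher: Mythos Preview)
Your overall strategy is correct and matches the paper's proof: split the filtered Floer complex via a quantitative K\"unneth formula (Theorem~\ref{thm:stabilization}), then invoke the genus-zero quasimorphism bound (Theorem~\ref{thm:qm}) on the sphere factor. The paper does this by decomposing $\Sigma = S^2 \# E_1 \# \cdots \# E_g$ with $D \subset S^2$ and applying Theorem~\ref{thm:stabilization} iteratively $g$ times, rather than a single $\Sigma_1 \# \Sigma_2$ split; but this is a minor difference.

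However, your account of the constants is off in a way that obscures how short the actual argument is. The denominator $k+g$ does not arise from handle generators ``diluting the maximal height difference'' in the action filtration. The mechanism is pure normalization: since $H$ is supported in $D \subset S^2$, the K\"unneth isomorphism shows that each torus factor $CF^*(\alpha_i,\alpha_i,H_\epsilon|_{E_i})$ contributes zero to the action, so one obtains an \emph{equality}
\[
c_{\Sym\underline L}\bigl(\Sym^{k+g}(H)\bigr) = c_{\Sym\underline L_0}\bigl(\Sym^k(H)\bigr).
\]
Theorem~\ref{thm:qm} bounds the defect of $c_{\Sym\underline L_0}$ by $(k+1)\lambda$, and dividing by $k+g$ (the definition of $c_{\underline L}$) gives $\frac{k+1}{k+g}\lambda$ immediately. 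There is no further triangle-inequality analysis to ``transplant'', and the numerator $k+1$ has nothing to do with the regions of $D \setminus \underline L$ (indeed $D$ need not meet every $L_i$); it is inherited wholesale from the $S^2$ result.
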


The construction of $c_{\underline L}$ and the proof of its local quasimorphism property relies on the following Künneth formula for connected sums in Heegaard Floer Homology, similar to the stabilization result of \cite{OS}, which is proved by identifying moduli spaces of holomorphic maps under degeneration:
\begin{theorem}
    \label{thm:stabilization}
    Consider two transverse $\eta$-monotone admissible Lagrangian links $\underline L$ and $\underline K$ with $k$ components on a closed surface $(\Sigma,\omega)$. Let $(E,\omega_E)$ denote the two-dimension torus, and $\alpha$ be a non-contractible circle on $E$. Let $\alpha '$ be a small Hamiltonian deformation of $\alpha$, such that $\alpha$ and $\alpha '$ are transverse.
    Then for an appropriate choice of almost complex structure, there is an isomorphism of filtered chain complexes
    \[ CF^*(\Sym \underline L, \Sym\underline K)\otimes CF^*(\alpha,\alpha ') \xrightarrow{\sim} CF^*(\Sym (\underline L\cup \alpha),\Sym (\underline K\cup\alpha '))\]
    where the LHS is computed considering the links $\underline L$ and $\underline K$ in $(\Sigma, \omega)$, $\alpha$ and $\alpha '$ in $(E,\omega_E)$, while in the RHS, $\underline L\cup\alpha$ and $\underline K\cup \alpha '$ are links in the connected sum $(\Sigma \# E,\omega')$ (where we perform the connected sum between a point $\sigma_1 \in \Sigma$ away from the links $\underline L$ and $\underline K$, and a point $\sigma_2 \in E$ away from the isotopy between $\alpha$ and $\alpha '$).
\end{theorem}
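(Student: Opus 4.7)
The plan is to establish the isomorphism by matching generators via a canonical bijection, transporting the action filtration under this bijection, and identifying the two Floer differentials through a neck-stretching degeneration of the connected sum, in the spirit of the Ozsv\'{a}th--Szab\'{o} stabilization argument of \cite{OS}.

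First, since the connect sum points $\sigma_1 \in \Sigma$ and $\sigma_2 \in E$ are chosen away from all the curves and away from the isotopy between $\alpha$ and $\alpha'$, the Lagrangian $\underline L \cup \alpha \subset \Sigma \# E$ is literally the disjoint union of $\underline L \subset \Sigma \setminus \{\sigma_1\}$ and $\alpha \subset E \setminus \{\sigma_2\}$, and similarly for $\underline K \cup \alpha'$. A generator of $CF^*(\Sym(\underline L \cup \alpha), \Sym(\underline K \cup \alpha'))$ is an unordered $(k+1)$-tuple of intersection points between the two links; each such tuple decomposes uniquely into $k$ points in $\underline L \cap \underline K$ and one point in $\alpha \cap \alpha'$, giving a canonical bijection with the tensor-product generators. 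The action of each matched generator is additive in the two components, so the filtrations match under this bijection.

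For the differential, I would work with an almost complex structure on $\Sigma \# E$ that, on a neighborhood of the connect-sum neck, is cylindrical, and then stretch this neck to infinite length. By the tautological correspondence, holomorphic disks in $\Sym^{k+1}(\Sigma \# E)$ bounded by $\Sym(\underline L \cup \alpha)$ and $\Sym(\underline K \cup \alpha')$ are in bijection with holomorphic maps from $(k+1)$-fold branched covers of the disk into $\Sigma \# E$. Under neck stretching, an SFT-type compactness argument produces a limit consisting of two pieces: one piece in $\Sigma \setminus \{\sigma_1\}$ with punctures asymptotic to Reeb orbits at $\sigma_1$, and one in $E \setminus \{\sigma_2\}$ with punctures at $\sigma_2$. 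Because $\alpha$ is non-contractible on $E$, homological considerations force the $E$-piece to be a single unbranched disk bounded by $\alpha \cup \alpha'$ that does not interact with $\sigma_2$, while the $\Sigma$-piece is a genuine holomorphic $k$-disk for $(\Sym \underline L, \Sym \underline K)$. Combining this with a gluing argument identifies the two moduli spaces and yields the chain-level isomorphism; preservation of area under the degeneration ensures this is a filtered isomorphism.

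The main obstacle will be the analytic content of the degeneration in the symmetric-product setting: one must arrange transversality simultaneously on the two summands using a suitable cylindrical almost complex structure, control the branching data so that exactly one sheet of the branched cover crosses the neck (to account for the $\alpha$ strand), and verify that no matched-pair moduli or boundary bubbles contribute extra terms to the differential. This is essentially a direct adaptation of the Ozsv\'{a}th--Szab\'{o} stabilization proof, where the role of the non-contractibility of $\alpha$ is to exclude the bubbling of disks in $E$ that would otherwise destroy the product structure, and this is the key geometric input distinguishing the present setting from the original.
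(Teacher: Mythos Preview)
Your overall strategy---matching generators, then identifying differentials via neck-stretching and gluing---is the same as the paper's, and you correctly identify that the non-contractibility of $\alpha$ rules out disk bubbles on the torus side. However, there is a genuine gap in your analysis of the degeneration.

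You claim that homological considerations force the $E$-piece of the limit to avoid $\sigma_2$ entirely, so that the limit is simply the product of a Floer trajectory in $\Sym^k(\Sigma)$ with a bigon between $\alpha$ and $\alpha'$. This is not correct. A Maslov-index-$1$ Floer trajectory $u$ in $\Sym^k(\Sigma)$ may well have $n_{\sigma_1}([u])>0$, i.e.\ may intersect the divisor $W=\{\sigma_1\}\times\Sym^{k-1}(\Sigma)$; nothing homological forbids this. When that happens, the corresponding disk in $\Sym^{k+1}(\Sigma\#E)$ is \emph{not} simply $u\times\{c\}$: at each intersection point of $u$ with $W$ one must glue a holomorphic sphere in $\Sym^{k-1}(\Sigma)\times\Sym^2(E)$ lying in the ruling of the $\mathbb{CP}^1$-bundle $\Sym^2(E)\to E$. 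These ruling spheres pass through $\Sym^{k-1}(\Sigma)\times(\{\sigma_2\}\times E)$ and are exactly what carries the curve across the neck. Conversely, in the Gromov limit as $T\to\infty$ they appear as sphere bubbles in the stratum $\Sym^{k-1}(\Sigma)\times\Sym^2(E)\subset\Sym^{k+1}(\Sigma\vee E)$, and an adjusted-Maslov-index computation relative to the normal-crossings divisor of $\Sym^{k+1}(\Sigma\vee E)$ is required to show that \emph{only} these ruling spheres, and no deeper bubbles in strata with $i\ge 3$, can occur. (In your cylindrical picture these bubbles become closed genus-one components in $E$ covering the torus once, not disks; non-contractibility of $\alpha$ says nothing about them.) This sphere-gluing is the technical core of the Ozsv\'ath--Szab\'o stabilization argument you invoke, and your proposal omits it.

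A secondary point: your treatment of the filtration is too brief. Generators carry cappings, and one must verify that the bijection of capped generators descends to equivalence classes (cappings modulo $\Ker\omega$), in particular that the generator $S$ of $\pi_2(\Sym^k(\Sigma))$ matches the generator $S'$ of $\pi_2(\Sym^{k+1}(\Sigma'))$ with the same symplectic area. This is where the specific area normalization $\omega'(\Sigma\#E)=\omega(\Sigma)$ stated after the theorem is used.
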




If we forget the filtration, Thereom \ref{thm:stabilization} is an identification of generators and differentials so it doesn't depend on the symplectic form. To guarantee that the filtration also agrees, the symplectic form $\omega'$ on $\Sigma \# E$ is chosen such that it equals to $\omega$ away from a neighborhood $B(\sigma_1)$ of $\sigma_1$ which does not intersect $\underline L \cup \underline K$, equals to $\omega_E$ over the support $K_{\alpha}$ of the Hamiltonian isotopy from $\alpha$ to $\alpha'$, and satisfies $\omega'(\Sigma \# E)=\omega(\Sigma)$ (so we need to assume that $\omega_E(K_{\alpha}) < \omega(B(\sigma_1))$ for $\omega'$ to exist).

\subsection*{Structure of the paper}

We collect some preliminaries in Section \ref{s:pre}. 
The new class of Lagrangian links and the proof of its local quasimorphism property (Theorem \ref{thm:localqm}) are given in Section \ref{sec:construction}.
Section \ref{sec:proof} is devoted to the proof of the main results, Theorem \ref{thm:extensiongeneral}, \ref{thm:notsimplegeneral} and \ref{thm:hameogeneral}.
Theorem \ref{thm:stabilization} is proved in Section \ref{sec:stabilization}.

\subsection*{Acknowledgement}

The authors thank Sobhan Seyfaddini for suggesting the exploration of a local quasimorphism property. 
They also thank Ivan Smith for discussions regarding the Floer theory of symmetric products, Kristen Hendricks for discussions concerning Heegaard Floer homology, and Vincent Humili\`ere for discussions about properties of spectral invariants.
C.M. was supported by the Royal Society University Research Fellowship while working on this project.
I.T. was supported by the École Normale Supérieure while working on this project.
I.T. was also partially supported by the ERC Starting number 851701.

\section{Preliminaries}\label{s:pre}

\subsection{Subgroups of $\bHam(\Sigma)$}\label{ss:subgroups}

Let $\Sigma$ be a compact connected surface equipped with an area form $\omega$.
We start by introducing some conventions and notations, which we follow closely from \cite{CGHMSS1}:

\begin{itemize}
    \item Given a Hamiltonian $H:S^1\times \Sigma\to\R$, the Hamiltonian diffeomorphism $\phi^1_H$ is the time 1 flow of the Hamiltonian vector field $X_{H_t}$ defined by $\iota_{X_{H_t}}\omega = dH_t$; 
    \item Given two Hamiltonians $H$ and $K$, we define the composition by $(H \# K)_t(x):=H_t(x)+K_t((\phi^{t}_H)^{-1}(x))$; 
    \item We denote by $\Ham(\Sigma)$ the group of Hamiltonian diffeomorphisms of $\Sigma$ supported in the interior of $\Sigma$ (it is often denoted $\Ham_c(\Sigma)$ in the literature);
    \item $\bHam(\Sigma)$ its closure for the $C^0$ distance;
    \item the Hofer norm of a Hamiltonian is $\vv H_t\vvh := \int_0^1\osc H_t dt = \int_0^1(\max H_t -\min H_t)dt$;
    \item the Hofer norm of a Hamiltonian diffeomorphism is $\vv \varphi\vvh := \inf\limits_{H_t,\varphi = \varphi^1_{H_t}}\vv H_t\vvh$;
    \item the Hofer distance on $\Ham(\Sigma)$ is $d_H(\varphi,\psi):=\vv \varphi\psi^{-1}\vvh$;
\end{itemize}

We define some subgroups of $\bHam(\Sigma)$ (cf. \cite{OM} and \cite{CGHS20}):

\begin{definition}$\varphi\in\overline{\Ham}(\Sigma,\omega)$ is called a \textbf{finite energy homeomorphism} if there exists a sequence of smooth Hamiltonians $H_i$ such that :
\begin{itemize}
\item $\phi^1_{H_i}\xrightarrow{C^0}\varphi$
\item There exists $C\geqslant 0$ such that for every $i$,
\[\vv H_i\vvh := \int_0^1\osc(H_{i,t})dt\leqslant C\]
\end{itemize}
\end{definition}

\begin{definition}
$\varphi\in\overline{\Ham}(\Sigma,\omega)$ is called a \textbf{hameomorphism} if there exists an isotopy $(\psi^t)_{t \in [0,1]}$ in $\bHam(\Sigma)$ from $\Id$ to $\varphi$ and a sequence of smooth Hamiltonians $H_i$ supported in a compact subset $K$ of the interior of $\Sigma$  such that :
\begin{itemize}
    \item $\phi^t_{H_i}\xrightarrow{C^0}\psi^t$ uniformly in $t \in [0,1]$;
    \item $(H_i)$ is a Cauchy sequence for the Hofer norm.
\end{itemize}
\end{definition}

We denote the group of finite energy homeomorphisms by $\FHomeo(\Sigma,\omega)$, and the group of hameomorphisms by $\Hameo(\Sigma,\omega)$.
When $\Sigma$ has non-empty boundary, one can define the Calabi invariant $\Cal : \Ham(\Sigma)\to\R$ as follow:
let $\varphi\in\Ham(\Sigma)$, and $H_t$ be a Hamiltonian supported in the interior of $\Sigma$ such that $\varphi=\phi^1_{H_t}$. Then,
\[\Cal(\varphi)=\int_0^1\int_\Sigma H_t\omega dt\]

This definition does not depend on the choice of the Hamiltonian $H_t$, and $\Cal$ is a group homomorphism.

As shown in \cite{CGHMSS1}, $\Cal$ can be extended canonically to a group homomorphism $\Hameo(\Sigma)\to R$ by the formula $\Cal(\varphi)=\lim\limits_{i\to\infty}\Cal(\varphi^1_{H_i})$, where we consider any sequence $(H_i)$ as in the definition of a hameomorphism.

The purpose of this paper is to study the algebraic structure of $\bHam(\Sigma)$ and its subgroups, for a general surface $\Sigma$.

Here is what was known before this paper:

\begin{enumerate}
    \item $\bHam(\Sigma)$ is not simple since $\FHomeo(\Sigma)$ is a proper normal subgroup (\cite{CGHMSS1});
    \item $\Hameo(S^2)$ is not simple (\cite{CGHMSS2});
    \item $\FHomeo(S^2)$ is not simple since $\Hameo(S^2)$ is a proper normal subgroup (\cite{Buhovsky});
    \item when $\Sigma$ has non-empty boundary :
    \begin{enumerate}
        \item $\Hameo(\Sigma)$ is not simple since it contains the kernel of the (extended) Calabi homomorphism (\cite{CGHMSS1});
        \item $\FHomeo(\Sigma)$ is not simple, since either $\Hameo(\Sigma)$ is a proper normal subgroup, or they coincide and by the previous point they are not simple (\cite{CGHMSS1});
        \item $\Hameo(D^2)\cap\Ker(\Cal)$ is not simple (\cite{CGHMSS2})
    \end{enumerate}
    \item All normal subgroups of $\bHam(\Sigma)$ contain the commutator subgroup, which is perfect and simple.    
\end{enumerate}

We will extend this picture with a generalization of $(2)$, $(3)$ and $(4)(c)$ respectively:
\begin{itemize}
    \item when $\Sigma$ is closed, $\Hameo(\Sigma)$ is not simple (Theorem \ref{thm:hameogeneral});
    \item when $\Sigma$ is closed, $\FHomeo(\Sigma)$ is not simple, since either $\Hameo(\Sigma)$ is a proper normal subgroup, or they coincide and by the previous point they are not simple;
    \item when $\Sigma$ has non-empty boundary, $\Hameo(\Sigma)\cap\Ker(\Cal)$ is not simple (Theorem \ref{thm:notsimplegeneral}).
\end{itemize}

\subsection{Spectral invariants and Quasimorphisms}

Let $(M,\omega)$ be a closed symplectic manifold, and $L\subset M$ a monotone Lagrangian, i.e. $\omega|_{\pi_2(M,L)}=\tau\mu|_{\pi_2(M,L)}$ for some constant $\tau>0$, where $\mu$ is the Maslov homomorphism.
Then, by \cite{LZ}, for a Lagrangian $L'$ Hamiltonian isotopic to $L$, and a Hamiltonian $H$ such that $\varphi^1_H(L) \pitchfork L'$, the Floer cohomology $HF^*(L,L',H)$ is well defined.

We follow the convention in \cite[Section 6]{CGHMSS1} and define the Floer cohomology $HF^*(L,L',H)$ is a vector space over $\C[[T]][T^{-1}]$.
In particular, there is an action filtration on the Floer complex, by defining $CF_\lambda(L,L',H)$, the subcomplex of $CF(L,L',H)$ generated by capped Hamiltonian chords of action less than or equal to $\lambda$. 
The inclusion of this subcomplex gives rise to a map 
\[
i_\lambda : HF_\lambda(L,L',H)\to HF(L,L',H).
\]

We assume that either $L'=L$ or $L' \pitchfork L$. In the former case, there is the PSS isomorphism $QH(L) \to HF(L,L,H)$.
In the latter case, there is the continuation isomorphism $HF(L,L',0) \to HF(L,L',H)$.
By an abuse of notation, we denote $QH(L)$ by $HF(L,L,0)$ and the isomorphism (in either case) by $\kappa$.
Given a homology class $a\in HF^*(L,L',0)\setminus\{0\}$, one can define a spectral invariant:
\[c_{L,L'}(a,H):=\inf\{\lambda | \kappa(a)\in \im i_\lambda\}\]
When $L=L'$ and $a = e_L$ is the unit of $QH^*(L)$, we will simply denote $c_L(H):=c_{L,L}(e_L,H)$.
This spectral invariant satisfies a homotopy invariance property, which enables us to define $c_L$ on $\tHam(M,\omega)$, the universal cover of $\Ham(M,\omega)$.

We recall the definition of a quasimorphism:
\begin{definition}
    Let $G$ be a group. A \textbf{quasimorphism} on $G$ is a map $\mu : G\to \R$ that satisfies:
    \[\exists D\geqslant 0, \forall g,h\in G, |\mu(gh)-\mu(g)-\mu(h)|\leqslant D\]
    The infimal value of $D$ such that this property holds is called the defect of $\mu$.
    
    Moreover, $\mu$ is an \textbf{homogeneous quasimorphism} if it also satisfies
    \[\forall n\in\Z,\forall g \in G,\mu(g^n)=n\mu(g)\]
\end{definition}

When $(M,\omega)=(\CP^n,\omega_{FS})$ and $L$ is a monotone Lagrangian submanifold with $HF(L) \neq 0$, $c_L$ is a quasimorphism on $\tHam(M,\omega)$.
This is a consequence of the same result for the Hamiltonian spectral invariant $c$ (cf. \cite{EP}), and the inequality $c_L\leqslant c$ (cf. \cite[Proposition 4]{LZ}).

\begin{prop}[Homogenization] Let $\mu : G \to \mathbb R$ be a quasimorphism. Then,
\[\widetilde \mu (g):= \lim\limits_{n\to\infty} \frac{\mu(g^n)}{n}\]
is well defined, and it is a homogeneous quasimorphism, called the \textit{homogenization} of $\mu$.
\end{prop}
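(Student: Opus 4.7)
My plan has three steps: establishing existence of the limit, proving homogeneity, and verifying the quasimorphism property with controlled defect.

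First I would establish the existence of $\widetilde\mu(g)$ for each $g\in G$. Setting $a_n := \mu(g^n)$, the defect inequality applied to $g^m\cdot g^n$ gives $|a_{m+n}-a_m-a_n|\le D$, where $D$ denotes the defect of $\mu$. Iterating this on the decomposition $g^{kn}=(g^n)^k$ yields $|a_{kn}-ka_n|\le (k-1)D$, hence $|a_{kn}/(kn)-a_n/n|\le D/n$. Comparing both $a_n/n$ and $a_m/m$ to $a_{mn}/(mn)$ via the triangle inequality then gives $|a_n/n-a_m/m|\le D/n+D/m$, so the sequence $(a_n/n)$ is Cauchy in $\mathbb R$ and the limit exists.

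Next, I would verify that $\widetilde\mu$ is homogeneous in $k\in\mathbb Z$. For $k>0$ this is immediate from $\widetilde\mu(g^k)=\lim_n \mu(g^{nk})/n = k\lim_n \mu(g^{nk})/(nk)=k\widetilde\mu(g)$. For $k=0$ I would use $|\mu(e\cdot e)-2\mu(e)|\le D$ to deduce $|\mu(e)|\le D$, hence $\widetilde\mu(e)=0$. For $k=-1$, the defect bound applied to $g^n\cdot g^{-n}=e$ gives $|\mu(g^n)+\mu(g^{-n})-\mu(e)|\le D$; dividing by $n$ and passing to the limit yields $\widetilde\mu(g^{-1})=-\widetilde\mu(g)$, and the general integer case follows by combining these observations.

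Finally, for the quasimorphism property, the strategy is to compare $(gh)^n$ with $g^n$ and $h^n$ at the level of $\mu$ and then pass to the limit. Iterating the defect inequality $n-1$ times yields $|\mu(x^n)-n\mu(x)|\le (n-1)D$ for any $x\in G$. Applying this with $x=g$, $x=h$, and $x=gh$, and combining with the single-step defect bound $|\mu(gh)-\mu(g)-\mu(h)|\le D$, the triangle inequality gives $|\mu((gh)^n)-\mu(g^n)-\mu(h^n)|\le (4n-3)D$. Dividing by $n$ and taking $n\to\infty$ proves $|\widetilde\mu(gh)-\widetilde\mu(g)-\widetilde\mu(h)|\le 4D$, so $\widetilde\mu$ is a homogeneous quasimorphism with defect at most $4D$. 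Since this is a classical statement, there is no genuine obstacle; the only subtlety is producing the linear-in-$n$ bound on $|\mu((gh)^n)-\mu(g^n)-\mu(h^n)|$, which the iteration trick above handles directly (and, with more care, can be sharpened, but any finite bound suffices for the statement).
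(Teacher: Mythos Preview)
Your proof is correct and entirely standard. Note, however, that the paper does not actually supply a proof of this proposition: it is stated as a well-known classical fact and immediately used, so there is no ``paper's own proof'' to compare against. Your argument is exactly the textbook one (subadditivity-type estimate to get a Cauchy sequence, then iterate the defect bound to control $\mu((gh)^n)-\mu(g^n)-\mu(h^n)$ linearly in $n$), and the constant $4D$ you obtain is fine for the statement as written; the sharper bound of $2D$ for the defect of $\widetilde\mu$ can be obtained with a slightly more careful estimate, but the paper makes no use of any explicit defect constant for the homogenization.
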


Now we explain the construction of spectral invariants for Lagrangian links as defined in \cite{CGHMSS1}.

Consider a closed symplectic surface $(\Sigma,\omega)$, with a compatible complex structure $j$.
A Lagrangian link on $\Sigma$ is a disjoint union $\underline L=L_1\cup...\cup L_k$ of smooth simple curves in $\Sigma$.

\begin{definition}
    \label{def:monotone}
    Denote by $B_j$, $1\leqslant j \leqslant s$, the connected components of $\Sigma\setminus\underline L$. Let $k_j$ be the number of boundary components of $B_j$, and $A_j$ the $\omega$-area of $B_j$. Let $\eta \geqslant 0$.
    We say that $\underline L$ is \textbf{$\eta$-monotone} if 
    \[\lambda:=2\eta(k_j-1)+A_j\]
    does not depend on $j$. $\lambda$ is called the \textbf{monotonicity constant} of $\underline L$.
    
    A Lagrangian link $\underline L$ on a compact surface $\Sigma_0$ with non-empty boundary is called $\eta$-monotone if there exists a symplectic embedding of $\Sigma_0$ into a closed surface $\Sigma$ such that $\underline L$ is $\eta$-monotone inside $\Sigma$.
\end{definition}

\begin{remark}
\label{rk:lambda}
    $\lambda$ is equal to the area of the disks bounded by contractible components of the link. Therefore, if $\underline L$ has $m$ components bounding pairwise disjoint disks, then $\lambda\leqslant \frac 1 m$.
\end{remark}

Let $\underline L=L_1\cup...\cup L_k$ be a Lagrangian link on $\Sigma$.
Denote by $\Sym \underline L$ the image of $L_1\times ... \times L_k$ in the symmetric product $\Sym^k(\Sigma) := \Sigma^k/\mathfrak S_k$, where $\mathfrak S_k$ is the permutation group permuting the factors.
Suppose that $\underline L$ is $\eta$-monotone and $\underline L'$ is Hamiltonian isotopic to $\underline L$.
Let $H:S^1 \times \Sigma \to \R$ be a Hamiltonian and $\Sym^k(H):S^1 \times \Sym^k(\Sigma) \to \R$ be given by 
$\Sym^k(H)_t(x_1,\dots,x_k):=\sum_{i=1}^k H_t(x_i)$.
We recall in Section \ref{sec:Heegaard} how from such a link one can define a Floer cohomology\footnote{The function $\Sym^k(H)$ is not smooth along the diagonal of $\Sym^k(\Sigma)$ but it turns out that any smooth Hamiltonian that agrees with $\Sym^k(H)$ outside a sufficiently small neighborhood of the diagonal will give the same Floer cohomology up to canonical isomorphisms as a filtered vector space. Therefore, $HF^*(\Sym\underline L,\Sym\underline L',\Sym^k(H))$ is defined to be the filtered vector space.} 
\begin{align}\label{eq:L=SymL}
HF(\underline L,\underline L',H):= HF^*(\Sym\underline L,\Sym\underline L',\Sym^k(H)).
\end{align}
It was shown in \cite{CGHMSS1} that $HF^*(\Sym\underline L,\Sym\underline L',\Sym^k(H))\cong H^*(\Sym \underline L)$ so a vector space (without filtration) so it is non-zero. 
Moreover, they show that Lagrangian spectral invariants
$c_{\Sym\underline L,\Sym\underline L'}(a,\Sym^k(H))$ are well-defined.
Therefore, one can define  link spectral invariants
\[c_{\underline L} := \frac 1 k c_{\Sym \underline L}=\frac 1 k c_{\Sym \underline L, \Sym \underline L}(e_{\Sym \underline L}, \cdot)\]

\begin{prop}
\label{prop:invariants}
This invariant inherits all the properties of Lagrangian spectral invariants:
\begin{itemize}\item (spectrality) $c_{\underline L}(H)$ lies in the action spectrum $\Spec(H,\underline L)$
\item (Hofer Lipschitz) $\left|c_{\underline L}(H)-c_{\underline L}(K)\right|\leqslant\vv H-K\vvh$
\item (monotonicity) If $H\leqslant K$ then $c_{\underline L}(H)\leqslant c_{\underline L}(K)$
\item (Lagrangian control) If $H_t|_{L_i}=s_i(t)$ for each $i$, then
\[c_{\underline L}(H)=\frac{1}{k}\sum\limits_{i=1}^k \int s_i(t)dt\]
Moreover,
\[\frac 1 k \sum\limits_{i=1}^k\int_{S^1}\min\limits_{L_i}H_t dt\leqslant c_{\underline L}(H)\leqslant \frac 1 k \sum\limits_{i=1}^k\int_{S^1}\max\limits_{L_i}H_t dt \]
\item (triangle inequality) $c_{\underline L}(H\# K)\leqslant c_{\underline L}(H)+c_{\underline L}(K)$
\item (homotopy invariance) If $H,K$ are mean normalized, $\phi^1_H=\phi^1_K$ and $(\phi^t_H)_{t \in [0,1]}$ is homotopic to $(\phi^t_K)_{t \in [0,1]}$ relative to endpoints, then $c_{\underline L}(H)=c_{\underline L}(K)$.
\item (shift) $c_{\underline L}(H+s(t))=c_{\underline L}(H)+\int s(t)dt$
\end{itemize}
\end{prop}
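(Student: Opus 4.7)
The plan is to reduce each listed property to the analogous property for the Lagrangian spectral invariants $c_{\Sym\underline L}$ on $\Sym^k(\Sigma)$ via the identity $c_{\underline L}(H) = \tfrac{1}{k}\, c_{\Sym\underline L}(\Sym^k(H))$ coming from \eqref{eq:L=SymL}. The key algebraic observation is that $\Sym^k$ interacts well with every operation appearing in the statement: it is $\R$-linear in $H$; it commutes with composition, in the sense that $\Sym^k(H\#K)=\Sym^k(H)\#\Sym^k(K)$, because the flow of $\Sym^k(H)$ is the diagonal action $\Sym^k(\phi_H^t)$; the oscillation scales as $\osc\Sym^k(H)_t = k\,\osc H_t$, so $\vv\Sym^k(H)\vvh = k\vv H\vvh$; and mean normalization on $\Sigma$ implies mean normalization of $\Sym^k(H)$ on $\Sym^k(\Sigma)$, since $\int_{\Sym^k\Sigma}\Sym^k(H)_t$ is proportional to $\int_\Sigma H_t\,\omega$.

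Given these observations, the spectrality, Hofer Lipschitz, monotonicity, triangle inequality, and shift properties are immediate from the corresponding properties of $c_{\Sym\underline L}$, with the factor of $k$ in the scaling cancelling the $\tfrac{1}{k}$ in the definition of $c_{\underline L}$. For shift one also uses $\Sym^k(H+s(t))_t = \Sym^k(H)_t + k\,s(t)$. For homotopy invariance, a homotopy relative to endpoints between paths $(\phi^t_H)$ and $(\phi^t_K)$ in $\Ham(\Sigma)$ lifts via $\Sym^k$ to a homotopy relative to endpoints between $(\Sym^k\phi^t_H)$ and $(\Sym^k\phi^t_K)$ in $\Ham(\Sym^k\Sigma)$, so the homotopy invariance of $c_{\Sym\underline L}$ yields the conclusion for $c_{\underline L}$.

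For Lagrangian control: when $H_t|_{L_i}=s_i(t)$ for each $i$, the function $\Sym^k(H)_t$ restricted to $\Sym\underline L = (L_1\times\cdots\times L_k)/\mathfrak{S}_k$ is the purely time-dependent constant $\sum_{i=1}^k s_i(t)$, and Lagrangian control for $c_{\Sym\underline L}$ gives the first formula after dividing by $k$. For the upper bound of the second part, the plan is to build, for each $\epsilon>0$, a smooth Hamiltonian $H^+_\epsilon \geqslant H$ with $H^+_\epsilon|_{L_i}=\max_{L_i}H_t+\epsilon$ by modifying $H$ inside disjoint tubular neighborhoods of the $L_i$'s using cutoff functions; monotonicity combined with the first Lagrangian control formula then yields $c_{\underline L}(H) \leqslant \tfrac{1}{k}\sum_i \int(\max_{L_i}H_t+\epsilon)\,dt$, and letting $\epsilon\to 0$ gives the stated upper bound. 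The lower bound is obtained by the symmetric construction.

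The main technical nuisance throughout is the non-smoothness of $\Sym^k(H)$ along the diagonal of $\Sym^k(\Sigma)$. However, as already addressed in the footnote of the excerpt, any smooth Hamiltonian that agrees with $\Sym^k(H)$ outside a sufficiently small neighborhood of the diagonal yields the same filtered Floer complex up to canonical isomorphism. Since $\Sym\underline L$ is disjoint from the diagonal, the relevant Hamiltonian chords and their action values are insensitive to the choice of smoothing, and all the preceding arguments carry through.
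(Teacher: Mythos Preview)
Your proposal is correct and follows essentially the same approach the paper intends: the paper does not give an explicit proof of this proposition, but simply records (here and again in Section~\ref{sec:construction}) that these properties are ``a straightforward consequence of the properties of Lagrangian spectral invariants'' via the definition $c_{\underline L}=\tfrac{1}{k}c_{\Sym\underline L}$, citing \cite{CGHMSS1} and \cite{H}. Your reduction is exactly this, carried out in more detail; the only minor simplification available is that the Lagrangian control inequalities follow directly from $\min_{\Sym\underline L}\Sym^k(H)_t=\sum_i\min_{L_i}H_t$ (and similarly for $\max$) applied to the standard bounds for $c_{\Sym\underline L}$, without needing the auxiliary Hamiltonians $H^\pm_\epsilon$.
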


The homotopy invariance permits to define $c_{\underline L}(\{\varphi^t\}_{t \in [0,1]})$ for $\{\varphi^t\}_{t \in [0,1]} \in \widetilde{\Ham}(\Sigma)$, by the formula $c_{\underline L}(\{\phi_H^t\}_{t \in [0,1]}) = c_{\underline L}(H)$ for a mean normalized $H$.

Moreover,  $c_{\underline L}$ a quasimorphism when $\Sigma = S^2$ (i.e. Theorem \ref{thm:qm}).
It is proved using the fact that $\Sym^k(S^2)\cong \CP^k$ (cf. \cite{EP}).

\section{Construction of the new invariants}
\label{sec:construction}

Let $(\Sigma,\omega)$ be a compact surface of genus $g$. We suppose that $\Sigma$ has area $1$.
We introduce the following class of links, which is slightly different from the ones in \cite{CGHMSS1} (cf. \cite{Chen21}, \cite{Chen22} for the study of this class of links in the cylindrical setting).

\begin{definition}\label{d:admissible}
    A Lagrangian link $\underline L = L_1\cup ...\cup L_k\cup \alpha_1\cup...\cup \alpha_g$ is called \textbf{admissible} if:
    \begin{itemize}
        \item the circles $L_1, ..., L_k, \alpha_1,...,\alpha_g$ are all disjoint;
        \item $\alpha_1,...,\alpha_g$ are non-contractible;
        \item there exists a decomposition of $\Sigma$ as a connected sum of a genus zero surface $\Sigma_0$ and $g$ tori such that each $\alpha_i$ lives in a different torus and $L_i$ lives in $\Sigma_0$;
        \item $\underline L_0:=L_1\cup...\cup L_k\subset \Sigma_0$ is $\eta$-monotone for some $\eta\geqslant 0$, with respect to a symplectic form $\omega_0$ on $\Sigma_0$ which coincides with $\omega$ outside a small neighborhood of the connected sum region away from the link, and such that $\omega_0(\Sigma_0)=1$.
    \end{itemize}
    We define the monotonicity constant of $\underline L$ as the monotonicity constant of $\underline L_0$ (see Definition \ref{def:monotone}).
\end{definition}

\begin{remark}[A remark on the third bullet of Definition \ref{d:admissible}]
\label{rk:admissible}
Suppose that $\underline L = L_1\cup ...\cup L_k\cup \alpha_1\cup...\cup \alpha_g$ satisfies the first two bullets of Definition \ref{d:admissible}.
Let $B$ be the image of $H_1(\partial \Sigma) \to H_1(\Sigma)$, $V$ be the image of  $H_1(\alpha_1 \cup \dots \cup \alpha_g) \to H_1(\Sigma)$ and $l_i$ be the image of  $H_1(L_i) \to H_1(\Sigma)$.
Topologically, if
$V$ is a $g$ dimensional subspace which intersects $B$ only at $0$ and $l_i \subset B$ for all $i$, then there is a decomposition of $\Sigma$ as  a connected sum of a genus zero surface $\Sigma_0$ and $g$ tori such that the third bullet of Definition \ref{d:admissible} is satisfied.

To see this, for simplicity, we first assume that there is no $L_i$ and $\Sigma$ is closed (so $B=0$). Then $V$ is a Lagrangian subspace with respect to the intersection form $\Omega$ on $H_1(\Sigma)$.
Let $a_i:=[\alpha_i] \in H_1(\Sigma)$. 
We can complete $\{a_i\}$ to a basis $\{a_1, \dots, a_g, b_1, \dots, b_g\}$ of $H_1(\Sigma;\mathbb{Z})$ such that $\Omega(a_i,b_i)=1$ and $\Omega(a_i,b_j)=0$ if $i \neq j$, and $\Omega(b_i,b_j)=0$ for all $i,j$.
We can find circles $\beta_i \subset \Sigma$, $i=1,\dots,g$, such that the geometric intersection number between any two circles in $\{\alpha_i,\beta_j\}$ agrees with the homological intersection number. 
The regular neighborhood of $\alpha_i \cup \beta_i$ gives the splitting of the $i^{th}$ torus in the connected sum decomposition.
The case when $\Sigma$ has boundary components can be proved by first embedding it to a closed surface by capping off the boundary components by disks (and choosing $\beta_i$ to avoid the capping disks).
The case when there is $L_i$ can be reduced to the case with no $L_i$ by running the argument above, for the positive genus components, in the complement of $\cup_i L_i$ (in particular, $L_i$ are allowed to be non-contractible separating circles).

\end{remark}

\begin{figure}[h]
	\centering
	\includegraphics[width=1\linewidth]{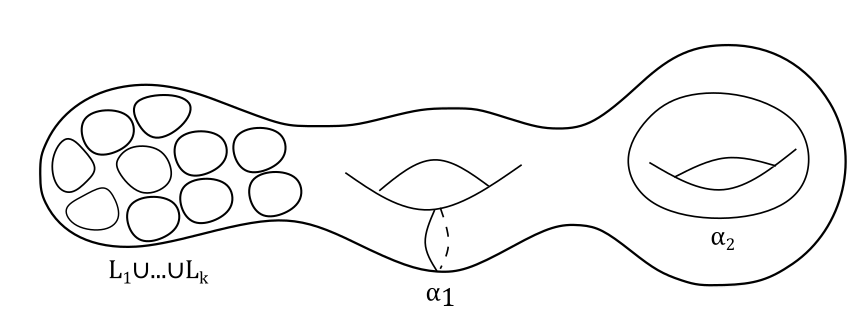}
	\caption{An admissible link}
	\label{fig:Figure 1}
\end{figure}

We assume that $\Sigma$ is closed. Then, given an admissible link $\underline L$, there exists a decomposition of $\Sigma$ as a connected sum $\Sigma = S^2\# E_1 \#...\# E_g$, where the $E_i$ are copies of the 2-torus, such that $\underline L_0:=L_1\cup...\cup L_k \subset S^2$ is $\eta$-monotone, and for all $1\leqslant i \leqslant g$, $\alpha_i \subset E_g$. (Here, we inflate the symplectic form near the connected sum point in $S^2$ so that $S^2$ has area $1$. This choice of symplectic form makes 
$\underline L_0$ $\eta$-monotone by the fourth bullet of Definition \ref{d:admissible}, and it
is compatible with the one in Theorem \ref{thm:stabilization}.)

The authors of \cite{CGHMSS1} show that $HF^*(\underline L_0,\underline L_0)$ is well-defined and non-zero.

By applying Theorem \ref{thm:stabilization} $g$ times, we get that $HF^*(\underline L, \underline L)$ is non-zero, and therefore for a non-degenerate Hamiltonian $H$, $HF^*(\underline L, \underline L,H)$ is also non-zero.
As a result, one can define spectral invariants 
\[c_{\underline L}(H):=\frac 1 {k+g} c_{\Sym\underline L}(\Sym^{k+g}(H))\]
for non-degenerate $H$, and then extend it to all Hamiltonians by continuity (i.e. the Hofer Lipschitz property in Proposition \ref{prop:invariants}).

If $\Sigma_0$ has non-empty boundary, then one can embed $\Sigma_0$ into a closed surface $\Sigma$, such that $\underline L$ remains admissible in $\Sigma$. Indeed, by the definition of $\eta$-monotonicity for surfaces with boundary, there exists an embedding into a closed surface $\Sigma$ such that $\underline L$ is still $\eta$-monotone inside $\Sigma$. 
Then, one can define the link spectral invariant for $\Sigma_0$ by restricting $c_{\underline L}$ to $\Ham(\Sigma_0)\subset \Ham(\Sigma)$.

The fact that this invariant satisfy all the properties listed in Proposition \ref{prop:invariants}, as in \cite{CGHMSS1}, is a straightforward consequence of the properties of Lagrangian spectral invariants (see \cite{H} for instance).

Before proving the local quasimorphism property \ref{thm:localqm}, we show the following statement:

\begin{prop}\label{p:notqm}
Let $\Sigma$ be a surface of genus $g>0$. Let $\underline L = L_1\cup... \cup L_k \cup \alpha_1 \cup ...\cup \alpha_g$ be a monotone admissible Lagrangian link on $\Sigma$, where $\alpha_1$, ..., $\alpha_g$ are the non-contractible components of $\underline L$.
Then $c_{\underline L}$ is not a quasimorphism.
\end{prop}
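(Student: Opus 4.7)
The plan is to contradict the quasimorphism property by exhibiting Hamiltonians with arbitrarily large defect. The strategy relies on two ingredients: (i) using the stabilization formula (Theorem \ref{thm:stabilization}) to reduce the problem to a single torus piece with a single non-contractible Lagrangian, and (ii) a direct computation on the torus showing that the Lagrangian spectral invariant associated to a non-contractible circle is not a quasimorphism.

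For the reduction, I would pick a torus piece $E_j$ of the connected sum decomposition from Definition \ref{d:admissible} and consider Hamiltonians $H$ supported in $E_j$, away from the connected sum point and the other components of $\underline L$. Applying Theorem \ref{thm:stabilization} iteratively (once for each of the $g$ non-contractible components), the filtered Floer complex $CF^*(\Sym \underline L, \Sym \underline L, \Sym^{k+g}(H))$ splits as a tensor product of the complex associated to $\underline L_0$ on $\Sigma_0$ (with the zero Hamiltonian, since $H$ is supported away from $\underline L_0$) and the complexes $CF^*(\alpha_i, \alpha_i')$ for $i \neq j$ (again with trivial Hamiltonian), together with the factor $CF^*(\alpha_j, \alpha_j', H)$. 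Since the spectral invariant of the zero Hamiltonian vanishes, this reduces the computation to $c_{\underline L}(H) = \frac{1}{k+g}\, c_{\alpha_j}(H)$ on $E_j \cong T^2$. Hence the quasimorphism defect for $c_{\underline L}$ on such Hamiltonians is, up to a factor of $\frac{1}{k+g}$, the defect of $c_{\alpha_j}$ on $T^2$, and it suffices to produce the failure on a single torus with a single non-contractible Lagrangian.

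On $(T^2, \alpha)$ I would construct a sequence of Hamiltonians $(H_n, K_n)$ such that $c_\alpha(H_n)$ and $c_\alpha(K_n)$ are controlled (either by Lagrangian control when the Hamiltonians are constant on $\alpha$, or by the min/max bounds of Proposition \ref{prop:invariants} otherwise), while the composition $H_n \# K_n$ has qualitatively different behavior. The idea is that Hamiltonian isotopies that shift $\alpha$ across the torus accumulate actions in the universal cover that, because $\pi_2(T^2, \alpha) = 0$, are not subject to the sphere-bubbling corrections that give the $\mathbb{CP}^k$ quantum product its field structure. Concretely, an iterate of a Hamiltonian that sweeps $\alpha$ once around a transverse cycle produces chords with action growing linearly in the iterate, and combining such Hamiltonians in two independent directions gives compositions whose action spectrum is strictly smaller than the sum of the individual spectra. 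Tracking this via the Floer-theoretic computation of actions on the relevant chords and showing $|c_\alpha(H_n) + c_\alpha(K_n) - c_\alpha(H_n \# K_n)| \to \infty$ contradicts any potential defect bound $D$.

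The main obstacle is the explicit Floer computation on $T^2$: identifying the correct chord representing the unit class for the iterated and composed Hamiltonians, tracking its capping data in the universal cover, and showing that the action values produce the claimed linear-in-$n$ mismatch (while remaining consistent with the Hofer–Lipschitz bound, which allows a linear but not superlinear growth). The absence of disk bubbling on $\alpha$, which is the essential feature distinguishing this from the $S^2$ case where $c_{\underline L}$ is a quasimorphism, is what makes the computation tractable and the mismatch rigid.
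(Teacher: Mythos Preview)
Your overall strategy (reduce to a single torus factor via K\"unneth, then exhibit unbounded defect for $c_\alpha$ on $T^2$) is plausible, but there is a genuine gap in step~(ii), and step~(i) has an unaddressed subtlety.

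The central problem is your torus construction. You propose Hamiltonians that ``sweep $\alpha$ once around a transverse cycle,'' but a \emph{Hamiltonian} isotopy has zero flux, so it cannot push $\alpha$ monotonically around a transverse cycle; any Hamiltonian deformation of $\alpha$ stays in its Hamiltonian isotopy class, and the net area swept is zero. The intuition you are relying on applies to symplectic but non-Hamiltonian isotopies. Your subsequent remarks about ``two independent directions'' and ``action spectrum strictly smaller than the sum'' are too vague to repair this, and it is not clear what concrete pair $(H_n,K_n)$ you have in mind or how you would identify the unit class in the composed Floer complex. Without this, there is no proof.

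There is also a technical issue with the reduction: Theorem~\ref{thm:stabilization} requires the Hamiltonian isotopy on the torus side to have \emph{small} support (the condition $\omega_E(K_\alpha)<\omega(B(\sigma_1))$), whereas you want to put a large Hamiltonian precisely on that side. This is not fatal (one can use Hamiltonians supported in a thin transverse annulus), but you have not addressed it, and your ``sweep around'' Hamiltonians would violate it.

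For comparison, the paper's argument is considerably more direct and avoids K\"unneth entirely. It works on $\Sigma$ itself: take a thin annulus $U$ around a circle $\beta_1$ intersecting $\underline L$ only in one point of $\alpha_1$, let $H$ be a bump function on $U$ constant along $\beta_1$ with maximum $1$, and set $H_n=nH$. The only Hamiltonian chords in the relevant component of path space are constant at fixed points, so $\Spec(\Sym H_n)=\{0,n\}$ and $\Spec(\Sym \overline H_n)=\{-n,0\}$; since $\gamma_{\underline L}\geq 0$ and $\gamma_{\underline L}(H_n)\neq 0$ (because $\varphi_{H_n}$ does not fix $\Sym\underline L$), spectrality forces $\gamma_{\underline L}(H_n)=\frac{n}{k+g}\to\infty$. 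This is essentially the $T^2$ computation you were aiming for, carried out directly on $\Sigma$ via spectrality rather than via a chord-by-chord Floer analysis.
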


\begin{proof} It is enough to show that there exists a sequence of Hamiltonians $(H_n)_n$ such that $\gamma_{\underline L}(H_n) := c_{\underline L}(H_n) + c_{\underline L}(\overline{H}_n)$ is not bounded.
We pick a non-contractible circle in $\Sigma$ that intersects $\underline L$ at a single point in $\alpha_1$. Such a circle always exists, take for instance $\beta_1$ as in Remark \ref{rk:admissible}.
Then, we pick a small neighborhood $U$ of this circle, diffeomorphic to the annulus $A=S^1\times (-1;1)$ (we denote by $\psi:U\rightarrow A$ such a diffeomorphism), such that $U\cap\underline L = U\cap \alpha_1$ is connected and sent to a vertical $\lbrace \theta_0\rbrace\times(-1;1)$ by $\psi$.
Let $H:(-1;1)\rightarrow \R$ be a smooth function such that :
\begin{itemize}
    \item $H$ is compactly supported;
    \item $H$ admits a single local maximum at 0 and no other critical point in the interior of its support;
    \item $H(0)=1$
\end{itemize}
We define $K_n$ on $A$ by $K_n(\theta,t) = n H(t)$, and $H_n$ on $\Sigma$ by:
\begin{itemize}
    \item $H_n(x) = K_n(\psi(x))$ if $x\in U$
    \item $H_n(x)=0$ if $x\notin U$
\end{itemize}

Now, we compute the sequence $(\gamma_{\underline L}(H_n))_n$ for this choice of Hamiltonians.

We know that $c_{\underline L}(H_n)$ lies in $\frac 1 {k+g} \Spec(\Sym(H_n))$. In order to compute this spectrum, we consider critical points of the action that are in the same connected component as a chosen reference path in $\mathcal P(\Sym(\underline L),\Sym(\underline L))$ (see Section \ref{sec:Heegaard} for a definition of the Heegaard Floer complex and the action functional).
We pick $x_1\in L_1,...,x_k\in L_k,y_1\in \alpha_1,...,y_g\in\alpha_g$ fixed by the flow of $H_n$, and take the constant path $\eta:=\lbrace x_1,...,x_k,y_1,...,y_g\rbrace$ in $\Sym(\underline L)$ as the reference path.

Then, the only critical points of the action that are in the same connected component as $\eta$ in $\mathcal P(\Sym \underline L, \Sym \underline L)$ are symmetric products of points fixed by $H_n$. They all have zero action except when we choose in $\alpha_1$ the point $y'_1$ for which $H_n$ is maximal. For any choice of $x'_i\in L_i, 1\leqslant i \leqslant k$, and $y'_i\in\alpha_i, 2\leqslant i \leqslant g$, the critical point $\lbrace x'_1,...,x'_k,y'_1, ..., y'_g\rbrace$ has action $n$.

Hence, $\Spec(\Sym(H_n))=\lbrace 0,n\rbrace$. Similarly, $\Spec(\Sym(\overline H_n))=\lbrace -n, 0\rbrace$ because $\overline H_n=-H_n$.
Therefore, $\gamma_{\underline L}(H_n) \in \lbrace -\frac n {k+g}, 0, \frac n {k+g}\rbrace$.

Since $\gamma_{\underline L}$ is non-negative, we can rule out $-\frac n {k+g}$. Moreover, $\Sym(\underline L)$ is not fixed by $\varphi_{H_n}$, so $\gamma_{\underline L}(H_n)$ is non-zero.

Finally, we get that $\gamma_{\underline L}(H_n)=\frac n {k+g}$, which is unbounded as $n$ goes to infinity.
\end{proof}

We now prove that this invariant satisfies Theorem \ref{thm:localqm}.

\begin{proof}[Proof of Theorem \ref{thm:localqm}]
    We consider an admissible link $\underline L = L_1\cup...\cup L_k\cup\alpha_1\cup...\cup\alpha_g$, and a disk $D$ that does not intersect $\underline \alpha := \alpha_1\cup...\cup\alpha_g$. Then, one can find a decomposition of $\Sigma$ as a connected sum $\Sigma = S^2\# E_1 \#...\# E_g$ such that $\underline L_0:=L_1\cup...\cup L_k \subset S^2$ is $\eta$-monotone, for all $1\leqslant i \leqslant g$, $\alpha_i \subset E_g$, and $D\subset S^2$.

    Let $H$ be a Hamiltonian supported in $D$, and let $H_\epsilon$ be an $\epsilon$-perturbation of $H$ in small neighborhoods of the link's components so that $HF(\underline L, \underline L, H_\epsilon)$ is well defined (cf. \eqref{eq:L=SymL}).
    We can assume that $H_{\epsilon}$ is chosen such that it is away from the connected sum neighborhoods of the decomposition
    $\Sigma = S^2\# E_1 \#...\# E_g$.
   
    Then, by applying Theorem \ref{thm:stabilization} $g$ times, we have that
    \begin{align*}
     CF^*(\underline L, \underline L, H_{\epsilon}) 
    \simeq  CF^*(\underline L_0, \underline L_0, H_{\epsilon}|_{S^2})\otimes  \bigotimes_{i=1}^g CF^*(\alpha_i,\alpha_i, H_{\epsilon}|_{E_i})
    \end{align*}

    Therefore, representatives of $\kappa(e_{\Sym(\underline L)})$ in $CF^*(\underline L, \underline L, H_{\epsilon})$ are in one-to-one correspondence with tensor products of representatives of unit classes in $CF^*(\underline L_0, \underline L_0, H_{\epsilon}|_{S^2})$ and $CF^*(\alpha_i,\alpha_i,H_{\epsilon}|_{E_i})$.
It follows from the proof of Theorem \ref{thm:stabilization} that this one-to-one correspondence preserves the action (which is defined as the sum of the actions on the tensor product). 
    
    Since $H_\epsilon$ is $\epsilon$-small on $E_i$, we get that $c_{\Sym\underline L}(\Sym^{k+g}(H)) = c_{\Sym\underline L_0}(\Sym^k(H))$ where $c_{\Sym\underline L_0}$ is computed inside $\Sym^k(S^2)$. Since $L_0$ is $\eta$-monotone inside $S^2$, with monotonicity constant $\lambda$, applying Theorem \ref{thm:qm} gives that the restriction of $c_{\Sym\underline L}$ to $\Ham_D(\Sigma)$ is a quasimorphism with defect bounded by $\frac{k+1}{k+g}\lambda$.
\end{proof}

We define the homogenized spectral invariant $\mu_{\underline L}$ by the formula:
\[\mu_{\underline L}(H):=\lim\limits_{n\to\infty}\frac {c_{\underline L}(H^{\# n})} n\]
This is well defined by the triangle inequality and Fekete's lemma.

\begin{prop}
The invariant $\mu_{\underline L}$ satisfies the following properties:
\begin{itemize}
\item (Hofer Lipschitz) $\left|\mu_{\underline L}(H)-\mu_{\underline L}(K)\right|\leqslant \vv H-K\vvh$ 
\item (Lagrangian control) Suppose $H$ is mean-normalized, and $H_t|_{L_i}=s_i(t)$. Then,
\[\mu_{\underline L}(H)=\frac{1}{k}\sum\limits_{i=1}^k\int_0^1 s_i(t)dt\]
Moreover,
\[\frac{1}{k}\sum\limits_{i=1}^k\int_0^1 \min\limits_{L_i} H_tdt\leqslant \mu_{\underline L}(H)\leqslant \frac{1}{k}\sum\limits_{i=1}^k\int^1_0 \max\limits_{L_i} H_tdt\]
\item (homotopy invariance) $\mu_{\underline L}$ descends to a map $\Ham(\Sigma)\to\R$
\item (support control) If $\supp(\varphi)\subset \Sigma\setminus \underline L$, then $\mu_{\underline L}(\varphi)=-\Cal(\varphi)$.
\item (conjugacy invariance) $\mu_{\underline L}(\psi\varphi\psi^{-1})=\mu_{\underline L}(\varphi)$
\end{itemize}
\end{prop}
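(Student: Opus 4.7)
Each of the five properties follows from the corresponding statement for $c_{\underline L}$ in Proposition \ref{prop:invariants} together with the definition $\mu_{\underline L}(H)=\lim_{n\to\infty} c_{\underline L}(H^{\#n})/n$. \emph{Homotopy invariance} is automatic because $c_{\underline L}$ already descends to $\tHam(\Sigma)$ on mean-normalized Hamiltonians and iteration preserves mean normalization. \emph{Conjugacy invariance} is inherited through the naturality identity $c_{\underline L}(\psi\varphi\psi^{-1})=c_{\psi^{-1}(\underline L)}(\varphi)=c_{\underline L}(\varphi)$, where the first equality is naturality of Lagrangian Floer theory and the second uses that $\psi\in\Ham(\Sigma)$ makes $\psi^{-1}(\underline L)$ Hamiltonian isotopic to $\underline L$; this propagates to $\mu_{\underline L}$ via $(\psi\varphi\psi^{-1})^n=\psi\varphi^n\psi^{-1}$. \emph{Hofer Lipschitz} follows from the factorization $\tilde\varphi^n\tilde\psi^{-n}=\prod_{i=0}^{n-1}\tilde\psi^i(\tilde\varphi\tilde\psi^{-1})\tilde\psi^{-i}$ and conjugation invariance of the Hofer norm, which yields $\|\tilde\varphi^n\tilde\psi^{-n}\|_{\operatorname{Hof}}\leq n\|\tilde\varphi\tilde\psi^{-1}\|_{\operatorname{Hof}}\leq n\|H-K\|_{\operatorname{Hof}}$; combining with Hofer Lipschitz of $c_{\underline L}$ and dividing by $n$ finishes the bound.

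For \emph{support control}, let $\varphi=\phi^1_H$ with $\supp(H)\subset \Sigma\setminus\underline L$ and take $\bar H_t := H_t - \int_\Sigma H_t\,\omega$ as the mean-normalized representative. Then $\bar H_t|_{\underline L}\equiv -\int_\Sigma H_t\,\omega$ and $\phi^t_{\bar H}=\phi^t_H$ fixes $\underline L$ pointwise, so inductively $\bar H^{\#n}$ is constant on every component of $\underline L$ with value $-n\int_\Sigma H_t\,\omega$. Applying the Lagrangian control of $c_{\underline L}$ to all $k+g$ components with this common constant value yields $c_{\underline L}(\bar H^{\#n})=-n\Cal(\varphi)$; dividing by $n$ gives $\mu_{\underline L}(\varphi)=-\Cal(\varphi)$.

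The main content is \emph{Lagrangian control}. The key input is that $H_t|_{L_i}=s_i(t)$ forces $X_{H_t}$ to be tangent to each contractible $L_i$, so $\phi^t_H$ preserves each $L_i$ setwise and inductively $H^{\#n}|_{L_i}=n\,s_i(t)$. Using the connected-sum decomposition $\Sigma=S^2\#E_1\#\cdots\#E_g$ from admissibility, Theorem \ref{thm:stabilization} applied $g$ times factors representatives of the unit class in $CF^*(\underline L,\underline L,H^{\#n}_\epsilon)$ as tensor products of unit representatives in $CF^*(\underline L_0,\underline L_0, H^{\#n}_\epsilon|_{S^2})$ and in each $CF^*(\alpha_j,\alpha_j, H^{\#n}_\epsilon|_{E_j})$, with actions adding. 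The $\underline L_0$-factor contributes $\frac{n}{k}\sum_i\int s_i(t)\,dt$ by the Lagrangian control for $c_{\underline L_0}$ on $S^2$, and the statement reduces to showing that the sum of the $\alpha_j$-factor contributions is $o(n)$. The max/min bounds follow from the same factorization applied to the max/min Lagrangian control for $c_{\underline L_0}$ on $S^2$.

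The hard step is precisely this $\alpha_j$-vanishing. Since $H$ is not a priori separated with respect to the connected-sum decomposition, one first replaces it, at the cost of only an $O(1)$ error in $c_{\underline L}(H^{\#n})$ via Hofer Lipschitz, by a Hamiltonian supported away from the connected-sum necks. One then needs to show that the homogenized Lagrangian spectral invariants of the non-contractible curves $\alpha_j\subset E_j$, evaluated on the resulting mean-normalized Hamiltonian, vanish. This is the principal technical obstacle: it requires a finer analysis exploiting that a non-contractible curve in the torus carries no monotonicity obstruction, and that after mean normalization the homogenization of its Lagrangian spectral invariant should detect only a Calabi-type average that vanishes.
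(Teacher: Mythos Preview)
The paper's own proof is a single sentence: ``These are all straightforward consequences of the properties of $c_{\underline L}$ (Proposition \ref{prop:invariants}) and the definition of $\mu_{\underline L}$.''  Against that baseline, your write-up is dramatically more elaborate, and in two places it goes astray.

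\textbf{Lagrangian control.} You have read the hypothesis as ``$H_t$ is constant on the $k$ contractible components $L_1,\dots,L_k$ only,'' and then you try to use the K\"unneth theorem (Theorem \ref{thm:stabilization}) to isolate and kill the torus contributions.  You yourself flag this as the ``principal technical obstacle'' and do not complete it.  In fact, under that reading the statement is false: the Hamiltonians $H_n$ built in the proof of Proposition \ref{p:notqm} vanish on every $L_i$ but have unbounded $\gamma_{\underline L}$, which is incompatible with the displayed identity.  The paper's one-line proof, together with the use of the factor $\tfrac{1}{k+g}$ in the very next proposition, makes clear that the intended hypothesis is that $H_t$ is constant on \emph{all} $k+g$ components of $\underline L$ (and the normalization should be $\tfrac{1}{k+g}$; the $\tfrac{1}{k}$ in the displayed formulas is a typo carried over from Proposition \ref{prop:invariants}).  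With that reading the proof is the trivial one you indicate at the start: the flow preserves every component, so $H^{\#n}$ restricts to $n\,s_i(t)$ on each, and Lagrangian control for $c_{\underline L}$ gives $c_{\underline L}(H^{\#n})=n\,c_{\underline L}(H)$.  No K\"unneth, no ``$\alpha_j$-vanishing.''

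\textbf{Conjugacy invariance.} Your line $c_{\underline L}(\psi\varphi\psi^{-1})=c_{\psi^{-1}(\underline L)}(\varphi)=c_{\underline L}(\varphi)$ is wrong at the second equality: Hamiltonian-isotopic Lagrangians give spectral invariants that differ by a bounded constant, not by zero.  The standard argument is the one that actually uses homogenization: the triangle inequality gives $|c_{\underline L}(\psi\varphi^n\psi^{-1})-c_{\underline L}(\varphi^n)|\le c_{\underline L}(\psi)+c_{\underline L}(\psi^{-1})$, a constant independent of $n$; divide by $n$ and let $n\to\infty$.  Your Hofer Lipschitz, support control, and homotopy invariance arguments are fine (for the last, note that $\tHam(\Sigma)=\Ham(\Sigma)$ for closed $\Sigma$ of positive genus).
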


\begin{proof}
    These are all straightforward consequences of the properties of $c_{\underline L}$ (\ref{prop:invariants}) and the definition of $\mu_{\underline L}$.    
\end{proof}

Moreover, we show the following:

\begin{theorem}
    \label{thm:mu}
    Suppose that $\underline L$ and $\underline L'$ are two admissible $\eta$-monotone links with the same number of components $k$, that share the same non-contractible components $\underline\alpha$. Then the homogenized spectral invariants $\mu_{\underline L}$ and $\mu_{\underline L'}$ coincide, and we denote by $\mu_{k,\eta,\underline\alpha}$ their common value.
\end{theorem}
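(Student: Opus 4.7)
The plan is to produce a Hamiltonian diffeomorphism $\psi\in\Ham(\Sigma)$ with $\psi(\underline L)=\underline L'$, and then apply naturality of Floer cohomology together with the conjugacy invariance of $\mu_{\underline L}$ to conclude.

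The first and main step is the construction of $\psi$. Write $\underline L=\underline L_0\cup\underline\alpha$ and $\underline L'=\underline L_0'\cup\underline\alpha$, where $\underline L_0$ and $\underline L_0'$ are the contractible parts. Because both links are $\eta$-monotone with the same data $(k,\eta,\underline\alpha)$, the cellular decompositions of $\Sigma\setminus\underline L$ and $\Sigma\setminus\underline L'$ agree combinatorially (the number of boundary components $k_j$ of each complementary region is determined by the areas $A_j=\lambda-2\eta(k_j-1)$, the constraint $\sum A_j=\omega(\Sigma)=1$, and the Euler characteristic), and the corresponding regions have equal $\omega$-areas. Moser's theorem then produces a symplectomorphism $\psi_0\colon\Sigma\to\Sigma$ with $\psi_0(\underline L)=\underline L'$ that is the identity on a small neighborhood $U$ of $\underline\alpha$. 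A priori $\psi_0$ is only symplectic, but its flux in $H^1(\Sigma;\R)$ annihilates each class $[\alpha_i]$ (since $\psi_0=\Id$ on $U$), so the flux lies in the $g$-dimensional subspace of $H^1(\Sigma;\R)$ dual to the cycles $[\beta_i]$ of the symplectic basis from Remark \ref{rk:admissible}. This subspace is exactly the image of the flux map on symplectic rotations along the $\alpha_i$; each such rotation is supported in a thin tubular neighborhood of $\alpha_i$ (disjoint from $\underline L_0'$) and preserves $\underline L'$ setwise. Composing $\psi_0$ with appropriate rotations cancels the flux and yields $\psi\in\Ham(\Sigma)$ with $\psi(\underline L)=\underline L'$.

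The remaining two steps are essentially formal. Since $\psi\in\Ham(\Sigma)$, the induced map $\Sym(\psi)$ on $\Sym^{k}(\Sigma)$ is Hamiltonian and sends $\Sym\underline L$ to $\Sym\underline L'$; pulling back Floer data along it produces an action-preserving chain isomorphism
\[
CF^*(\Sym\underline L,\Sym\underline L,\Sym^{k}H)\ \xrightarrow{\sim}\ CF^*(\Sym\underline L',\Sym\underline L',\Sym^{k}(H\circ\psi)),
\]
hence $c_{\underline L'}(H)=c_{\underline L}(H\circ\psi)$. Homogenizing and using that $\phi^{1}_{H\circ\psi}=\psi^{-1}\phi^{1}_{H}\psi$ gives $\mu_{\underline L'}(\varphi)=\mu_{\underline L}(\psi^{-1}\varphi\psi)$ for every $\varphi\in\Ham(\Sigma)$. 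The conjugacy invariance of $\mu_{\underline L}$, which applies because $\psi\in\Ham(\Sigma)$, then yields $\mu_{\underline L}(\psi^{-1}\varphi\psi)=\mu_{\underline L}(\varphi)$, whence $\mu_{\underline L}=\mu_{\underline L'}$.

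The main obstacle is the first step. Producing the Moser-type symplectomorphism requires a careful topological verification that the complementary decompositions of two admissible $\eta$-monotone links sharing $\underline\alpha$ coincide, including the cyclic order in which the $\alpha_i$-sides of the higher-$k_j$ regions are met; this is the place where one uses the combinatorial rigidity coming from $\eta$-monotonicity. The flux cancellation by rotations along the $\alpha_i$ is then the natural way to promote the resulting symplectic $\psi_0$ into a Hamiltonian diffeomorphism, and the rest of the argument is a standard naturality computation.
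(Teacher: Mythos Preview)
Your Step~1 fails: two admissible $\eta$-monotone links sharing the same data $(k,\eta,\underline\alpha)$ need \emph{not} be Hamiltonian isotopic, nor even ambiently homeomorphic as links. The claimed combinatorial rigidity is simply false. For a concrete counterexample take $\Sigma=S^2$ (i.e.\ $g=0$, so $\underline\alpha=\varnothing$) with total area $1$ and $k=3$. In configuration~A the three circles bound pairwise disjoint disks, so the complementary regions have boundary counts $\{1,1,1,3\}$; in configuration~B one circle is nested inside another with the third separate, giving boundary counts $\{1,1,2,2\}$. Both satisfy $\sum_j k_j=2k=6$ and have $k+1=4$ regions, so the Euler-characteristic constraint you invoke does not distinguish them. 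A direct check shows that for any $\eta>0$ both configurations are $\eta$-monotone with the same monotonicity constant $\lambda=(1+4\eta)/4$, yet no homeomorphism of $S^2$ can carry one link to the other. The same phenomenon persists after taking connected sums with tori and adjoining a common $\underline\alpha$. Consequently there is in general no $\psi\in\Ham(\Sigma)$ with $\psi(\underline L)=\underline L'$, and the rest of your argument collapses.

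The paper circumvents this by working directly with Floer theory rather than trying to move one link onto the other. It uses the fact, imported from \cite{CGHMSS1}, that for any two $\eta$-monotone links $\underline L_0,\underline L_0'$ on $S^2$ with the same number of components one has $HF^*(\underline L_0,\underline L_0')\neq 0$, and indeed contains classes $a_0,b_0$ whose pants product is the unit $e_{\Sym\underline L_0}$. Tensoring with $e_{\underline\alpha}$ via the K\"unneth isomorphism of Theorem~\ref{thm:stabilization} produces classes $a,b$ in $HF^*(\underline L,\underline L')$ and $HF^*(\underline L',\underline L)$ with $a*b=e_{\Sym\underline L}$. Subadditivity of spectral invariants under the pants product then gives $c_{\underline L}(H)\leqslant c_{\underline L'}(H)+C$ for a constant $C$ independent of $H$; homogenizing kills $C$ and yields $\mu_{\underline L}\leqslant\mu_{\underline L'}$, and by symmetry equality. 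The point is that this comparison goes through even when $\underline L$ and $\underline L'$ are not isotopic.
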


\begin{proof}
    Let $*$ denote the pants product 
    \[HF^*(\underline L,\underline L')\otimes HF^*(\underline L', \underline L)\to HF^*(\underline L,\underline L).\]
    Using Theorem \ref{thm:stabilization}, we can view it as a map \[HF^*(\underline L_0,\underline L_0')\otimes HF^*(\underline \alpha,\underline \alpha)\otimes HF^*(\underline L'_0, \underline L_0)\otimes HF^*(\underline \alpha,\underline \alpha)\to HF^*(\underline L_0,\underline L_0)\otimes HF^*(\underline \alpha,\underline \alpha)\]
    Since $\underline L$ and $\underline L_0$ are two $\eta$-monotone links with the same number of components in $S^2$, there exist classes $a_0\in HF^*(\underline L_0,\underline L_0')$ and $b_0\in  HF^*(\underline L'_0, \underline L_0)$ such that $a_0 * b_0=e_{\Sym \underline L_0}\in HF^*(\underline L_0,\underline L_0)$.

    Let $a$ be the image of $a_0\otimes e_{\underline\alpha}$ in $HF^*(\underline L,\underline L')\cong HF^*(\underline L_0,\underline L_0')\otimes HF^*(\underline \alpha,\underline \alpha)$, and $b$ the image of $b_0\otimes e_{\underline\alpha}$ in $HF^*(\underline L',\underline L)\cong HF^*(\underline L_0',\underline L_0)\otimes HF^*(\underline \alpha,\underline \alpha)$.

    Then, $a*b$ is the image of $(a_0*b_0)\otimes (e_{\underline\alpha}*e_{\underline\alpha})=e_{\Sym \underline L_0}\otimes e_{\underline\alpha}$, i.e. $a*b=e_{\Sym \underline L}$ is the unit of $HF^*( \underline L, \underline L)$.

    Then, by the subadditivity property of Lagrangian spectral invariants, we have for any Hamiltonian $H$:
    \begin{align*}
        &c(\Sym \underline L,\Sym\underline L,e_{\Sym\underline L}, H)\\ 
        \leqslant& c(\Sym\underline L,\Sym\underline L',a, H)+c(\Sym\underline L',\Sym\underline L,b,0)\\
        \leqslant& c(\Sym\underline L,\Sym\underline L',a, 0)+c(\Sym\underline L',\Sym\underline L',e_{\Sym\underline L'}, H)+c(\Sym\underline L',\Sym\underline L,b,0)
    \end{align*}

    i.e.
    \[c_{\underline L}(H)\leqslant c_{\underline L'}(H)+\frac 1 k\left(c(\Sym\underline L,\Sym\underline L',a, 0)+c(\Sym\underline L',\Sym\underline L,b,0)\right)\]

    We get for all $n>0$:
    \[\frac {c_{\underline L}(H^{\#n})} n \leqslant \frac {c_{\underline L'}(H^{\#n})} n+ \frac {c(\Sym\underline L,\Sym\underline L',a, 0)+c(\Sym\underline L',\Sym\underline L,b,0)} {kn}\]
    and therefore $\mu_{\underline L}(H)\leqslant \mu_{\underline L'}(H)$.
    Swapping the roles of $\underline L$ and $\underline L'$, we get the other inequality and finally $\mu_{\underline L}=\mu_{\underline L'}$.

\end{proof}

Since the homogenized spectral invariants are conjugacy invariant, $\mu_{k,\eta,\underline \alpha}=\mu_{k,\eta,\underline \alpha'}$ when $\underline\alpha$ and $\underline\alpha'$ are Hamiltonian isotopic, and therefore we can write $\mu_{k,\eta,[\underline \alpha]}:=\mu_{k,\eta,\underline \alpha}$ where $[\underline \alpha]$ is the class of $\underline \alpha$.

We fix a decomposition of $\Sigma$ as a connected sum $\Sigma=\Sigma_0\#E_1\#...\#E_g$ where $\Sigma_0$ is a genus zero surface, and the $E_i$ are copies of the 2-torus. Recall that we modify the symplectic form in a neighborhood of the connected sum points so that $\Sigma_0$ has area 1. 

Let $\beta^1_{j,\theta}$ be the circle $\{\theta\}\times S^1 \subset S^1\times S^1 \cong E_j$, and $\beta^2_{j,\theta}$ be the circle $S^1\times \{\theta\}\subset E_j$.

For $\underline \theta = (\theta_1,...,\theta_g)\in (S^1)^g$, and $\underline \epsilon=(\epsilon_1,...,\epsilon_g)\in\{1,2\}^g$, let $\underline\alpha^{\underline \epsilon}_{\underline \theta}:=\beta^{\epsilon_1}_{1,\theta_1}\cup...\cup\beta^{\epsilon_g}_{g,\theta_g}$. When the components of $\underline\alpha^{\underline \epsilon}_{\underline \theta}$ do not intersect the connected sum regions, this defines a Lagrangian link on $\Sigma$.

\begin{figure}[h]
	\centering
	\includegraphics[width=0.7\linewidth]{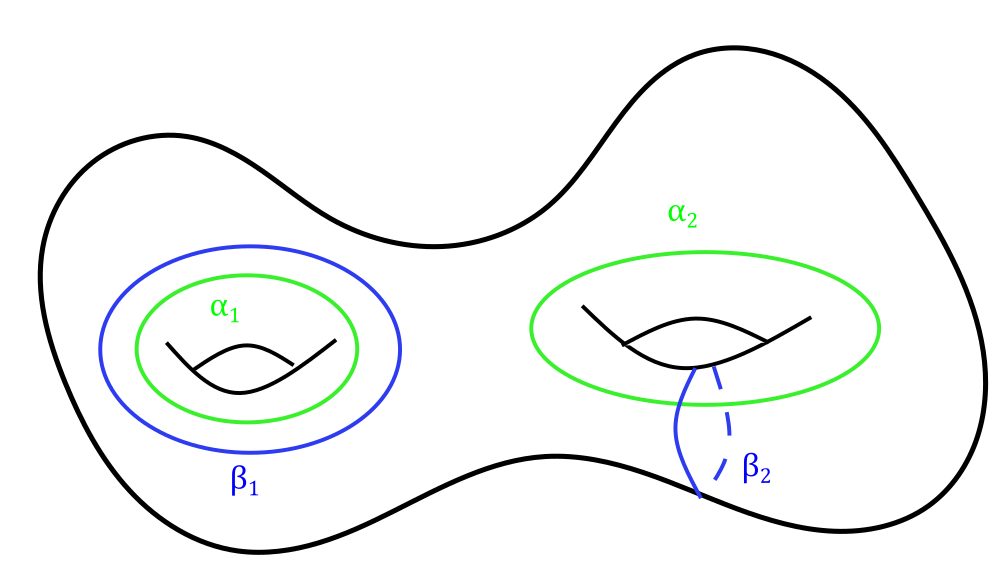}
	\caption{Two links $\underline \alpha$ and $\underline \beta$ inducing independent invariants $\mu_{\underline\alpha}$ and $\mu_{\underline\beta}$ }
	\label{fig:Figure 2}
\end{figure}

\begin{prop}
    The $\{\mu_{k,\eta,[\underline\alpha^{\underline\epsilon}_{\underline \theta}]}\}$ are linearly independent.
\end{prop}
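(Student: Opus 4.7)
The plan is a direct evaluation-matrix argument: supposing a hypothetical finite relation $\sum_{i=1}^{N} c_{i}\, \mu_{k,\eta,[\underline\alpha^{\underline\epsilon_{i}}_{\underline\theta_{i}}]} \equiv 0$ on $\Ham(\Sigma)$ with pairwise distinct tuples $(\underline\epsilon_{i},\underline\theta_{i})$ and not all $c_i=0$, I would construct test Hamiltonian diffeomorphisms $\varphi_{i}\in\Ham(\Sigma)$ making the evaluation matrix $M_{il}:=\mu_{k,\eta,[\underline\alpha^{\underline\epsilon_{l}}_{\underline\theta_{l}}]}(\varphi_{i})$ invertible, which then forces all $c_{i}=0$.

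For each $i$, take $\varphi_{i}:=\phi^{1}_{H_{i}}$ for an autonomous Hamiltonian $H_{i}=\sum_{j}h_{i,j}$, where on each torus factor $E_{j}$ the bump $h_{i,j}$ is supported in a thin tubular annular neighborhood of the circle $\beta^{\epsilon_{i,j}}_{j,\theta_{i,j}}$, depends only on the coordinate normal to this circle, and takes a prescribed value $a_{i,j}$ exactly on the circle. Finiteness of the collection lets us shrink the tubes so that they avoid every circle $\beta^{\epsilon_{i,j}}_{j,\theta_{l,j}}$ with $\theta_{l,j}\neq \theta_{i,j}$. When $\underline\epsilon_{l}=\underline\epsilon_{i}$, every component of $\underline\alpha^{\underline\epsilon_{l}}_{\underline\theta_{l}}$ is parallel to the level curves of $H_{i}$, so Lagrangian control yields $M_{il}=\tfrac{1}{k+g}\sum_{j:\theta_{l,j}=\theta_{i,j}}a_{i,j}$. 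A generic choice of the bump heights $a_{i,j}$ therefore makes the restriction of $M$ to each single-profile block invertible, which settles the case in which all tuples appearing in the relation share a single profile $\underline\epsilon$.

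The main obstacle lies in the cross-profile entries $M_{il}$ with $\underline\epsilon_{l}\neq\underline\epsilon_{i}$: on a torus factor $E_{j}$ where the profiles disagree, the relevant component of $\underline\alpha^{\underline\epsilon_{l}}_{\underline\theta_{l}}$ is transverse to the support of $h_{i,j}$, so Lagrangian control yields only the crude bound $|M_{il}|\leq \tfrac{g}{k+g}$. To pin these down, I would use the Künneth factorization of Theorem~\ref{thm:stabilization} to decompose the relevant Floer chain complex as a tensor product over the connected-sum factors, and then analyze the filtered Floer complex on each $E_{j}$ where $\epsilon_{l,j}\neq\epsilon_{i,j}$: for an autonomous bump $h_{i,j}$, the Hamiltonian chords from $\beta^{\epsilon_{l,j}}_{j,\theta_{l,j}}$ to itself concentrate at the critical points of $h_{i,j}$ restricted to the transverse circle, with action values in a known finite set determined by $a_{i,j}$. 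Making the $a_{i,j}$'s generic with respect to these explicit contributions would render the full matrix $M$ non-singular, completing the argument.
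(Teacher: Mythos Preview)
Your overall evaluation-matrix strategy is natural, but the proposed handling of the cross-profile entries has a genuine gap, and in fact fails already in the smallest nontrivial case.

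Consider $g=1$ and the two invariants $\mu_{k,\eta,[\beta^{1}_{0}]}$ and $\mu_{k,\eta,[\beta^{2}_{0}]}$. Your test Hamiltonians are bumps $h_1,h_2$ of heights $a_1,a_2$ supported near $\beta^{1}_{0}$ and $\beta^{2}_{0}$ respectively. The same action-spectrum computation carried out in Proposition~\ref{p:notqm} (the only relevant constant chords on the transverse circle sit at the maximum of the bump and outside its support, with actions $a_i$ and $0$) shows that the cross-profile value is $\mu_{k,\eta,[\beta^{2}_{0}]}(h_1)=\tfrac{a_1}{k+1}$, which is \emph{exactly} the same as the diagonal value $\mu_{k,\eta,[\beta^{1}_{0}]}(h_1)=\tfrac{a_1}{k+1}$ given by Lagrangian control. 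By symmetry the second row behaves the same way, so your $2\times 2$ matrix has both rows proportional to $(1,1)$ and is singular for every choice of $a_1,a_2$. Mean-normalising does not help, since it subtracts the same constant from every entry of a row. Thus no ``generic choice of $a_{i,j}$'' can rescue the argument: the cross-profile invariants are determined by the $a_{i,j}$ in a specific way, not as free parameters, and they can coincide identically with the diagonal ones.

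The paper sidesteps this obstruction by never trying to compute cross-profile values. It works one torus factor $E_j$ at a time and uses a single bump $H$ near one $\beta^{1}_{j,\theta_1}$, together with its translates $H_i=H\circ\rho_i^{-1}$ under rotations $\rho_i$ of $E_j$ in the first coordinate. Since each $\rho_i$ fixes every $\beta^{2}_{j,\theta}$, the cross-profile contributions $\mu_p(H_i)$ for $p$ of type $\beta^2$ are independent of $i$; evaluating the relation at an \emph{extra} translate $H_0$ concentrated near a $\beta^{1}_{j,\theta_0}$ with $\theta_0$ not appearing in the relation then kills the cross-profile sum outright, leaving only the Lagrangian-control diagonal terms. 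Iterating over $j$ gives the direct-sum decomposition $E_{k,\eta}=\bigoplus_{\epsilon,\theta}E_{k,\eta,\beta^{\epsilon}_{j,\theta}}$. You should also note that the reduction to fixed $(k,\eta)$, which your write-up implicitly assumes, is a separate step the paper handles by citing the argument from \cite{CGHMSS1}.
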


\begin{proof}
    Let $E$ be the vector space generated by the $\{\mu_{k,\eta,[\underline\alpha^{\underline\epsilon}_{\underline \theta}]}\}$, and $E_{k,\eta}$ the vector subspace generated by the $\{\mu_{k,\eta,[\underline\alpha^{\underline\epsilon}_{\underline \theta}]}\}$ where $k$ and $\eta$ are fixed.
    Following the argument in \cite{CGHMSS1}, one can show that $E=\bigoplus\limits_{k,\eta}E_{k,\eta}$.

    Now we show that for fixed $k$ and $\eta$, the $\{\mu_{k,\eta,[\underline\alpha^{\underline\epsilon}_{\underline \theta}]}\}$ are linearly independent.   
    For $1\leqslant j \leqslant g$, let $E_{k,\eta,\beta^\epsilon_{j,\theta}}$ be the subspace generated by the $\mu_{k,\eta,[\underline\alpha^{\underline\epsilon}_{\underline \theta}]}$ that satisfy $\epsilon_{j}=\epsilon$ and $\theta_j=\theta$.
    We are going to show that 
    for every $j=1,\dots,g$, we have 
    \begin{align}\label{eqLdirectsum}E_{k,\eta}=\bigoplus_{\epsilon,\theta} E_{k,\eta,\beta^\epsilon_{j,\theta}}.\end{align}

    Let $l$ and $m$ be non-negative integers. Pick $l$ different elements $\theta_1,...,\theta_l$ in $S^1$, and let $\mu_i$ be an element of $E_{k,\eta,\beta^1_{j,\theta_i}}$.
    We also pick $m$ different elements $\theta_{l+1},...,\theta_{l+m}$ in $S^1$, and let $\mu_{l+i}$ be an element of $E_{k,\eta,\beta^2_{j,\theta_{l+i}}}$.
Let $a_i$ be real numbers such that 
\[\sum\limits_{i=1}^{l+m}a_i\mu_i=0.\] 
We want to show that for all $i$, $a_i=0$.

    Let $V$ be a small neighborhood of $\beta^1_{j,\theta_1}$ that does not intersect the connected sum points and the $\beta^1_{j,\theta_i}$ for $2\leqslant i\leqslant l$. Let $H$ be a Hamiltonian supported in $V$ such that $H|_{\beta^1_{j,\theta_1}}\equiv 1$.

    Let $\theta_0\in S^1-\{\theta_1,...,\theta_l\}$ be such that $\beta^1_{j,\theta_0}$ is away from the connected sum points. For $0\leqslant i\leqslant l$, let $\rho_i$ be the rotation of the torus defined by
    \[\rho_i (\theta,\varphi)=(\theta +\theta_i-\theta_1, \varphi)\]
    We can assume that $V$ is small enough such that for any $i=0,\dots,l$, $\rho_i(V)$ is a  neighborhood of $\beta^1_{j,\theta_i}$ that does not intersect the connected sum points and the $\beta^1_{j,\theta_s}$ for $s \neq i$.
    Let $H_i:=H\circ \rho_i^{-1}$, which is supported in $\rho_i(V)$.

    Then, by the Lagrangian control property, for $0\leqslant i\leqslant l$ and $1\leqslant p \leqslant l$, $\mu_p(H_i)=\frac 1 {k+g} \delta_{i,p}$.
    Moreover, for $l+1\leqslant p \leqslant l+m$, since the $\rho_i$ stabilize the $\beta^2_{j,\theta_{p}}$, we get that $\mu_p(H_i)$ does not depend on $i$.

    Therefore, applying the equality $\sum\limits_{i=1}^{l+m}a_i\mu_i=0$ at $H_i$ for $1\leqslant i\leqslant l$ gives
    \[\frac {a_i} {k+g}=-\sum\limits_{p=l+1}^{l+m}a_p\mu_p(H_1)\]
    and at $H_0$ :
    \[\sum\limits_{p=l+1}^{l+m}a_p\mu_p(H_1)=0\]
    Thus, $a_i = 0$ for $1\leqslant i\leqslant l$. Since the $\beta^1$ and $\beta^2$ play symmetric roles, we can show in the same way that $a_i = 0$ for $l+1\leqslant i\leqslant l+m$.

    Therefore, we have \eqref{eqLdirectsum} for all $j$, and hence $\{\mu_{k,\eta,[\underline\alpha^{\underline\epsilon}_{\underline \theta}]}\}$ are linearly independent.  
    
\end{proof}

\section{Extending the Calabi homomorphism and simplicity}
\label{sec:proof}

The proofs of Theorem \ref{thm:extensiongeneral},  \ref{thm:notsimplegeneral} and \ref{thm:hameogeneral}  will be given in the following three subsections respectively. The main idea is to replace the quasimorphism property (Theorem \ref{thm:qm}), which no longer exists for positive genus surfaces, by Theorem \ref{thm:localqm} and the fragmentation technique. Some of the estimates is a bit more delicate than the ones in \cite{CGHMSS2}.

\subsection{Proof of Theorem \ref{thm:extensiongeneral}}
We can now give a proof of Theorem \ref{thm:extensiongeneral}, inspired by the proof of (\ref{thm:extension}) found in \cite{CGHMSS2}.

\begin{definition}A sequence of admissible Lagrangian links $(\underline L^k)$ is called \textbf{equidistributed} if:
\begin{itemize}
    \item all the $\underline L^k$ share the same contractible components $\alpha_1$,..., $\alpha_g$;
    \item $\underline L^k$ has $k$ contractible components $\underline L^k_1$,..., $\underline L^k_k$;
    \item the $L^k_i$ bound disjoint disks $D^k_i$, and $\diam(\underline L^k):= \max(\diam D^k_i)\xrightarrow[k \to \infty]{} 0$.
\end{itemize}
\end{definition}

Given such a sequence, we get a sequence of link spectral invariants $c_{\underline L^k}$ which satisfies the Calabi property:

\begin{prop}
\label{prop:Calabi}
    For any smooth Hamiltonian $H : S^1\times \Sigma \rightarrow \R$,
    \[c_{\underline L^k}(H)=\int_{S^1}\int_{\Sigma}H_t\omega dt+\operatorname{O}\limits_{k\to\infty}\left(\diam(\underline L^k) \right)\]
    In particular, for any smooth Hamiltonian diffeomorphism $\varphi$,
    \[c_{\underline L^k}(\varphi) = \operatorname O_{k\to\infty}\left(\diam(\underline L^k) \right)\]
\end{prop}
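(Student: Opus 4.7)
The plan is to adapt the corresponding genus-zero Calabi property from \cite{CGHMSS2}, making the modifications needed to accommodate the extra non-contractible components $\alpha_1,\dots,\alpha_g$ of an admissible link and the inflated symplectic form $\omega_0$ on $\Sigma_0$. By the shift property of Proposition \ref{prop:invariants}, it suffices to establish $c_{\underline L^k}(H) = O(\diam(\underline L^k))$ whenever $H$ is mean-normalized on $\Sigma$; the second claim about a smooth Hamiltonian diffeomorphism $\varphi$ then follows immediately by choosing a mean-normalized generating Hamiltonian and invoking homotopy invariance.

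First, I would apply the Lagrangian control bound to all $k+g$ components:
\[c_{\underline L^k}(H) \leqslant \frac{1}{k+g}\sum_{i=1}^k \int_{S^1}\max_{L_i^k} H_t\,dt + \frac{1}{k+g}\sum_{j=1}^g\int_{S^1}\max_{\alpha_j} H_t\,dt,\]
together with the analogous lower bound. A $C^1$-estimate on $H$ replaces each $\max_{L_i^k} H_t$ by a pointwise value $H_t(p_i^k)$ for $p_i^k\in L_i^k$ at cost $O(\diam(\underline L^k))$ per component. The $\eta_k$-monotonicity of $\underline L^k_0$ forces every disk $D_i^k$ to have the same area $\lambda_k = (1+2\eta_k(k-1))/(k+1) \in [1/(k+1),\,1/k]$, and the equidistribution condition $\Area(D_i^k) = O(\diam(\underline L^k)^2)$ then gives $1/k = O(\diam^2)$. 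Consequently the $\alpha_j$-contribution is bounded by $g\|H\|_\infty/(k+g) = O(\diam)$, and the normalization factor satisfies $(k+g)\lambda_k = 1 + O(1/k) = 1 + O(\diam^2)$.

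The heart of the argument is a Riemann-sum estimate: for $p_i^k \in D_i^k$,
\[\lambda_k \sum_{i=1}^k H_t(p_i^k) \;=\; \sum_i \int_{D_i^k} H_t\,\omega_0 + O(\diam) \;=\; \int_{\Sigma_0} H_t\,\omega_0 + O(\diam),\]
where the second equality uses that $\Sigma_0 \setminus \bigcup_i D_i^k$ has area $1 - k\lambda_k \leqslant 1/(k+1) = O(\diam^2)$. Dividing by $(k+g)\lambda_k = 1 + O(\diam^2)$, assembling all the estimates above and combining with the analogous lower bound yields
\[c_{\underline L^k}(H) = \int_{S^1}\int_{\Sigma_0} H_t\,\omega_0\,dt + O(\diam(\underline L^k)).\]

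The final step, and the place I expect the main subtlety to sit, is the replacement of $\int_{\Sigma_0} H_t\,\omega_0$ by $\int_\Sigma H_t\,\omega$. Since $\omega_0$ differs from $\omega$ only in a small neighborhood of the connected-sum region (disjoint from $\underline L^k$), the two integrals differ by at most a constant times $\|H\|_\infty$ times the total $\omega$-area of the tori $E_i$. In the admissible setup this total area is at our disposal: the $E_i$ are tubular neighborhoods of $\alpha_j \cup \beta_j$ as in Remark \ref{rk:admissible} whose thickness can be shrunk as much as desired, so this constant error can be absorbed into the asymptotic $O(\diam)$ as $k\to\infty$. This $\omega$-versus-$\omega_0$ bookkeeping is the only place where the positive-genus argument departs meaningfully from the template of \cite{CGHMSS2}; everything else is a direct adaptation.
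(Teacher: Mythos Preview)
Your overall strategy---Lagrangian control plus a Riemann-sum estimate---matches the paper's, but your detour through $(\Sigma_0,\omega_0)$ introduces a step whose justification, as written, does not go through. You claim the error $\bigl|\int_{\Sigma_0}H_t\,\omega_0-\int_\Sigma H_t\,\omega\bigr|$ is controlled by the $\omega$-area of the tori $E_i$, and then assert that this area ``is at our disposal'' and ``can be shrunk as much as desired.'' But that phrasing only says the area can be made small for each fixed $k$; it gives no reason the area is $O(\diam(\underline L^k))$ \emph{as $k\to\infty$}. Since the $\alpha_j$ are fixed across the whole sequence, it is not obvious a priori that a $k$-dependent choice of decomposition can shrink the tori at the required rate. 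There is also a definitional wrinkle: $H$ lives on $\Sigma$, not on the capped-off sphere $\Sigma_0$, so $\int_{\Sigma_0}H_t\,\omega_0$ needs interpretation.

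The fix exposes that the detour was never needed. Because $\omega=\omega_0$ on each disk $D_i^k$, the $\omega$-area of $D_i^k$ is exactly $\lambda_k$, and monotonicity on $\Sigma_0$ forces $\lambda_k\ge 1/(k+1)$. Hence the $k$ disjoint disks occupy $\omega$-area at least $k/(k+1)$ inside $\Sigma$, and the complement $\Sigma\setminus\bigcup_i D_i^k$---which contains the entire torus regions---has $\omega$-area at most $1/(k+1)=O(\diam^2)$. So the Riemann sum $\lambda_k\sum_i H_t(p_i^k)$ already approximates $\int_\Sigma H_t\,\omega$ directly, with error $O(1/k)$, and no passage through $\omega_0$ is required. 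This is exactly what the paper does: it builds an auxiliary $G^k$ constant on each $D_i^k$ and equal to $H$ on the $\alpha_j$, applies Hofer--Lipschitz plus exact Lagrangian control, and compares $\frac{1}{k+g}\sum_i H_t(x_i^k)$ to $\int_\Sigma H_t\,\omega$ on $(\Sigma,\omega)$ throughout. Your approach becomes correct once you replace the ``shrinking tori'' heuristic with the forced area bound above---at which point it collapses into the paper's argument.
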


\begin{proof}
We fix a point $x^k_i$ in each of the disks $D^k_i$.
Then, one can find smooth Hamiltonians $G^k$ such that:
\begin{itemize}
    \item $G^k_t\equiv H_t(x^k_i)$ on $D^k_i$
    \item $G^k_t = H_t$ on $\alpha_i$
    \item $\vv G^k_t - H_t\vv_\infty\leqslant \diam(\underline L^k) \sup\limits_{S^1\times\Sigma} \vv dH_t \vv$
\end{itemize}

Then, using the Hofer Lipschitz property and Lagrangian control (Proposition \ref{prop:invariants}), we get:
\begin{align*}
    &\leftv c_{\underline L^k}(H) - \int_{S^1}\int_{\Sigma}H_t\omega dt\rightv \\
    \leqslant& \leftv c_{\underline L^k}(H) - c_{\underline L^k}(G^k)\rightv + \leftv c_{\underline L^k}(G^k)-\int_{S^1}\int_{\Sigma}G^k_t\omega dt \rightv + \leftv\int_{S^1}\int_{\Sigma}(G^k_t-H_t)\omega dt\rightv \\
    \leqslant& \vv H-G^k\vvh + \leftv\frac 1 {k+g} \sum\limits_{i=1}^k\int_{S^1}G^k_t(x^k_i)dt-\int_{S^1}\int_{\Sigma}G^k_t\omega dt \rightv\\
    &+ \frac 1 {k+g}\sum\limits_{i=1}^g \int_{S^1}\max\limits_{\alpha_i} |G^k_t| dt+ \vv G^k-H\vvh\\
    \leqslant& 2\diam(\underline L^k) \sup\limits_{S^1\times\Sigma} \vv dH_t \vv + \leftv\frac 1 {k+g} \sum\limits_{i=1}^k\int_{S^1}G^k_t(x^k_i)dt-\int_{S^1}\int_{\Sigma}G^k_t\omega dt \rightv + \operatorname O\left(\frac 1 k\right)
\end{align*}

Let $A:=\Area(D^k_i)$. We have $\frac 1 {k+1}\leqslant A \leqslant \frac 1 k$, so $\diam(\underline L^k)\geqslant \frac C {\sqrt k} $ for some positive constant $C$ and therefore $\frac 1 k =\operatorname O_{k\to\infty}\left(\diam(\underline L^k) \right)$. Moreover,

\begin{align*}
    &\leftv \frac 1 {k+g} \sum\limits_{i=1}^k\int_{S^1}G^k_t(x^k_i)dt-\int_{S^1}\int_{\Sigma}G^k_t\omega dt\rightv \\
    =& \leftv\frac{1} {(k+g)A} \sum\limits_{i=1}^k\int_{S^1}\int_{D^k_i}G^k_t\omega dt -\int_{S^1}\int_{\Sigma}G^k_t\omega dt\rightv\\
    =& \leftv\left(\frac{1} {(k+g)A} -1\right) \int_{S^1}\int_{\bigcup D^k_i}G^k_t\omega dt - \int_{S^1}\int_{\Sigma\setminus(\bigcup D^k_i)}G^k_t \omega dt\rightv\\
    \leqslant& \leftv\frac 1 {(k+g)A}-1\rightv\vv G^k\vv_\infty + \vv G^k\vv_\infty \int_{\Sigma\setminus(\bigcup D^k_i)}\omega\\
    \leqslant& \left(\leftv\frac 1 {(k+g)A}-1\rightv + 1-kA\right)\left(\vv H\vv_\infty + \vv H-G^k\vv_\infty\right)\\
    =& \operatorname O\left(\frac 1 k\right)
\end{align*}

\end{proof}

Since the invariants $c_{\underline L^k}$ satisfy all the properties listed in Proposition \ref{prop:invariants}, the same proof as in \cite{CGHMSS1} show that:

\begin{prop}
    $f_{\underline L^k} := c_{\underline L^k} + \Cal : \Ham(\Sigma)\rightarrow \R$ is uniformly continuous, and therefore extends continuously to $\bHam(\Sigma)$.
\end{prop}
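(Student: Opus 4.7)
The strategy is to adapt the proof of the analogous uniform continuity statement in \cite{CGHMSS1}, replacing the global quasimorphism property (which by Proposition~\ref{p:notqm} fails in positive genus) with the local quasimorphism property of Theorem~\ref{thm:localqm} combined with a fragmentation argument. First, I would reduce uniform continuity to showing that $|f_{\underline L^k}(\chi)|$ tends to zero uniformly as $\chi \in \Ham(\Sigma)$ approaches $\Id$ in $C^0$. Setting $\chi := \varphi \psi^{-1}$, the triangle inequality for $c_{\underline L^k}$ (Proposition~\ref{prop:invariants}) combined with the exact additivity of $\Cal$ yields
\[
-f_{\underline L^k}(\chi^{-1}) \;\leqslant\; f_{\underline L^k}(\varphi) - f_{\underline L^k}(\psi) \;\leqslant\; f_{\underline L^k}(\chi),
\]
so it suffices to bound $f_{\underline L^k}(\chi)$ and $f_{\underline L^k}(\chi^{-1})$ for $\chi$ $C^0$-close to the identity.

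For such $\chi$, I apply a Le Roux-type fragmentation to write $\chi = \chi_1 \cdots \chi_N$, where each $\chi_i$ is supported in a topological disk $B_i$ of diameter controlled by $d_{C^0}(\chi,\Id)$, and $N$ is bounded in terms of the area of $\Sigma$. By adapting the cover used for the fragmentation to the fixed link $\underline L^k$, one can arrange each $B_i$ to lie in a larger disk $D_i \subset \Sigma$ disjoint from a fixed neighborhood of the non-contractible components $\underline \alpha$ of $\underline L^k$, so $\chi_i \in \Ham_{D_i}(\Sigma)$. Each $|f_{\underline L^k}(\chi_i)|$ is then estimated by combining the Lagrangian control property (restricting the contribution to the few components of $\underline L^k$ meeting $B_i$), the Calabi property (Proposition~\ref{prop:Calabi}), and the local quasimorphism on $\Ham_{D_i}(\Sigma)$ (Theorem~\ref{thm:localqm}, with defect at most $\tfrac{k+1}{k+g}\lambda$), yielding a per-piece bound that vanishes with $\mathrm{diam}(B_i)$. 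The one-sided triangle inequality $c_{\underline L^k}(\chi) \leqslant \sum_i c_{\underline L^k}(\chi_i)$ together with the additivity of $\Cal$ then gives $f_{\underline L^k}(\chi) \leqslant \sum_i f_{\underline L^k}(\chi_i)$, and a symmetric argument applied to $\chi^{-1}$ produces the matching lower bound.

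The main technical obstacle is that $\chi$ itself is not supported in a single disk avoiding $\underline\alpha$, so Theorem~\ref{thm:localqm} cannot be applied to $\chi$ directly; the applications on different $D_i$'s must be combined carefully, and one has to balance the per-piece estimate against the number of pieces $N$ (which grows as $d_{C^0}(\chi,\Id)\to 0$) to ensure that the accumulated error still vanishes in the limit. Once uniform continuity is established, the continuous extension of $f_{\underline L^k}$ to $\bHam(\Sigma)$ follows by a standard density argument, using completeness of $\mathbb R$.
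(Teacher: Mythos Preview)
Your central premise is mistaken: the uniform continuity argument in \cite{CGHMSS1} does \emph{not} use the quasimorphism property, only the properties in Proposition~\ref{prop:invariants} (Hofer Lipschitz, Lagrangian control, triangle inequality, shift). Since the paper has established exactly those properties for the new invariants $c_{\underline L^k}$, the proof transfers verbatim --- which is precisely what the paper says. You have confused this proposition with the later Proposition~\ref{p:gammabound}, where a quasimorphism-type bound \emph{is} needed to control $\gamma_{\underline L^k}$ and where the paper does invoke Theorem~\ref{thm:localqm} together with a fragmentation-and-conjugation trick.

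Your attempted repair also has concrete gaps. First, the claim that each fragmentation disk $B_i$ can be arranged to sit inside a larger disk $D_i$ disjoint from $\underline\alpha$ is geometrically impossible: any $B_i$ meeting a non-contractible component $\alpha_j$ (and such $B_i$ must occur, since the $B_i$ cover the support of a general $\chi$) cannot be contained in any set disjoint from $\alpha_j$. When the paper does need to push supports off $\underline\alpha$ (in Proposition~\ref{p:gammabound}), it \emph{conjugates} each piece by a smooth $\psi_i$ displacing its support and absorbs the cost $\gamma_{\underline L^k}(\psi_i)$ via the Calabi property --- a maneuver absent from your sketch. Second, the fragmentation you describe (bounded $N$ with supports of diameter $O(d_{C^0}(\chi,\Id))$) does not exist: an area count shows that if each support has area $O(\epsilon^2)$ then $N \gtrsim \epsilon^{-2}$, since the union of the supports must cover $\operatorname{supp}(\chi)$. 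Finally, it remains unclear how the defect bound of Theorem~\ref{thm:localqm} would yield a bound on $|f_{\underline L^k}(\chi_i)|$ itself rather than on a triangle-inequality defect.
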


Now, we define the following relation on the space of real valued sequences $\mathbb R^{\mathbb N}$ : we say that $x \sim y$ if $\lim x-y = 0$. This is an equivalence relation, and the quotient $\mathbb R^{\mathbb N}/\sim$ is a real vector space.
Then, we can define a map

\begin{align*} f: &\overline{\Ham}(\Sigma) \to \mathbb R^{\mathbb N}/\sim\\
\varphi &\mapsto (f_{\underline L^1}(\varphi), f_{\underline L^2}(\varphi),...)
\end{align*}

We claim that $f$ is a group homomorphism, that is, for every $\varphi$, $\psi$ in $\overline{\Ham}(\Sigma)$:

\[f_{\underline L^k}(\varphi \psi) - f_{\underline L^k}(\varphi)-f_{\underline L^k}(\psi) \to 0\]

 Since the spectral invariants satisfy the triangle inequality, and since $\Cal$ is a group homomorphism, we have the following inequality for every $\varphi$, $\psi$ in $\Ham(\Sigma)$ and $k$ in $\mathbb N$ :
 
\[f_{\underline L^k}(\varphi \psi) - f_{\underline L^k}(\varphi)-f_{\underline L^k}(\psi)\leqslant 0 \]

This inequality still holds for the extension of $f_{\underline L^k}$ to $\overline{\Ham}(\Sigma)$.
We also have, using the triangle inequality :

\begin{align*}f_{\underline L^k}(\varphi \psi) - f_{\underline L^k}(\varphi)-f_{\underline L^k}(\psi)&\geqslant f_{\underline L^k}(\varphi \psi) - f_{\underline L^k}(\varphi)-f_{\underline L^k}(\varphi\psi)-f_{\underline L^k}(\varphi^{-1}) \\
&\geqslant -(f_{\underline L^k}(\varphi)+f_{\underline L^k}(\varphi^{-1}))
\end{align*}

Hence, the following property is enough to show that $f$ is a group homomorphism:

\begin{prop}\label{p:gammabound}
    For all $\varphi\in\bHam(\Sigma)$, $\gamma_{\underline L^k}(\varphi):=f_{\underline L^k}(\varphi)+f_{\underline L^k}(\varphi^{-1})$ goes to zero when $k$ goes to infinity.
\end{prop}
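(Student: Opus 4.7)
The plan is to first establish $\gamma_{\underline L^k}(\varphi)\to 0$ for smooth $\varphi\in\Ham(\Sigma)$ via fragmentation combined with the local quasimorphism property (Theorem \ref{thm:localqm}) and the Lagrangian control property, and then extend to $\bHam(\Sigma)$ via the continuity of $f_{\underline L^k}$ in the $C^0$ topology with a modulus that is uniform in $k$.

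\textbf{Setup.} Since the non-contractible components $\alpha_1,\dots,\alpha_g$ of $\underline L^k$ do not depend on $k$, I fix once and for all a connected-sum decomposition $\Sigma=\Sigma_0\#E_1\#\cdots\#E_g$ compatible with every $\underline L^k$, so that $L_i^k\subset\Sigma_0$ and $\alpha_j\subset E_j$. I pick a tubular neighborhood $U$ of $\underline\alpha:=\alpha_1\cup\cdots\cup\alpha_g$ contained in $E_1\cup\cdots\cup E_g$, away from the connected-sum regions; in particular $U\cap L_i^k=\emptyset$ for every $i,k$. Next fix $U'\Subset U$ and cover $\Sigma\setminus U'$ by finitely many open disks $D_1,\dots,D_r\subset\Sigma\setminus\underline\alpha$, so that $\{U,D_1,\dots,D_r\}$ is an open cover of the interior of $\Sigma$.

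\textbf{Estimate on $\Ham$.} By the standard fragmentation lemma applied to this open cover, any smooth $\varphi\in\Ham(\Sigma)$ factors as $\varphi=\varphi_1\cdots\varphi_M$ (with $M=M(\varphi)$ depending on $\varphi$) where each $\varphi_j$ is supported in $U$ or in some $D_l$. Applying the triangle inequality for $c_{\underline L^k}$ to $\varphi$ and to $\varphi^{-1}=\varphi_M^{-1}\cdots\varphi_1^{-1}$, and using $\Cal(\varphi_j)+\Cal(\varphi_j^{-1})=0$, one obtains $\gamma_{\underline L^k}(\varphi)\leqslant\sum_{j=1}^M \gamma_{\underline L^k}(\varphi_j)$. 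Each piece is estimated as follows.
\begin{itemize}
\item If $\varphi_j\in\Ham_{D_l}(\Sigma)$, Theorem \ref{thm:localqm} applied to the pair $(\varphi_j,\varphi_j^{-1})$ (using $c_{\underline L^k}(\mathrm{id})=0$) gives $\gamma_{\underline L^k}(\varphi_j)\leqslant\tfrac{k+1}{k+g}\lambda^k$, which is $O(1/k)$ since $\lambda^k\leqslant 1/k$ by Remark \ref{rk:lambda}.
\item If $\varphi_j\in\Ham_U(\Sigma)$ with Hamiltonian $H_j$, then $H_j|_{L_i^k}\equiv 0$ for every $i$ because $U$ avoids every $L_i^k$; the Lagrangian control property of Proposition \ref{prop:invariants} then yields $|c_{\underline L^k}(H_j)|\leqslant \tfrac{g}{k+g}\int_0^1\max_{\underline\alpha}|H_{j,t}|\,dt\leqslant\tfrac{g}{k+g}\vv H_j\vv_\infty$. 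Because $\phi^t_{H_j}$ preserves $U$, the generator $\overline{H_j}$ of $\varphi_j^{-1}$ is again supported in $U$ with the same sup-norm, so the identical bound holds for $c_{\underline L^k}(\varphi_j^{-1})$, giving $\gamma_{\underline L^k}(\varphi_j)\leqslant\tfrac{2g}{k+g}\vv H_j\vv_\infty=O(1/k)$.
\end{itemize}
Summing the finitely many pieces, $\gamma_{\underline L^k}(\varphi)\leqslant C_\varphi/k\to 0$ as $k\to\infty$, where $C_\varphi$ depends on the fragmentation data of $\varphi$ but not on $k$.

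\textbf{Passage to $\bHam$.} Each $f_{\underline L^k}$ extends continuously from $\Ham(\Sigma)$ to $\bHam(\Sigma)$ in the $C^0$ topology; crucially, the modulus of continuity can be arranged to be uniform in $k$, following the $C^0$-continuity arguments for link spectral invariants of \cite{CGHMSS1,CGHMSS2}. Given $\varphi\in\bHam(\Sigma)$ and $\varepsilon>0$, I pick a smooth $\psi\in\Ham(\Sigma)$ sufficiently $C^0$-close to $\varphi$ so that $|\gamma_{\underline L^k}(\varphi)-\gamma_{\underline L^k}(\psi)|<\varepsilon$ for every $k$; then by the previous step I choose $k_0$ such that $\gamma_{\underline L^k}(\psi)<\varepsilon$ for $k\geqslant k_0$, giving $\gamma_{\underline L^k}(\varphi)<2\varepsilon$. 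The main obstacle in this plan is establishing the uniform-in-$k$ $C^0$-modulus of continuity needed for the $\bHam$ extension; the smooth fragmentation estimate itself is a direct consequence of Theorem \ref{thm:localqm} and Proposition \ref{prop:invariants}.
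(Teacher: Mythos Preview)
Your argument for smooth $\varphi$ is correct and is a pleasant variant of the paper's: instead of conjugating disk-supported pieces away from $\underline\alpha$ by smooth diffeomorphisms, you fragment with respect to a cover $\{U,D_1,\dots,D_r\}$ and dispose of the $U$-supported pieces directly via Lagrangian control. Both approaches give $\gamma_{\underline L^k}(\varphi)=O(1/k)$ on $\Ham(\Sigma)$.

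The gap is in your passage to $\bHam(\Sigma)$. You need the $C^0$-modulus of continuity of $f_{\underline L^k}$ to be \emph{uniform in $k$}, and you do not prove this; you merely cite \cite{CGHMSS1,CGHMSS2} and then flag it yourself as ``the main obstacle''. The uniform continuity proved in those papers is uniform in the argument, not a priori uniform across the sequence of links, so this step is not justified. Note also that your own fragmentation argument does not transfer directly to $\bHam$: the $U$-piece estimate relies on having a smooth generating Hamiltonian to feed into Lagrangian control, which is unavailable for a general element of $\bHam_U(\Sigma)$.

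The paper sidesteps the uniform-modulus issue entirely. It fragments $\varphi\in\bHam(\Sigma)$ directly (fragmentation holds at the $C^0$ level) into pieces $\varphi_i$ supported in disks $D_i$, then chooses \emph{smooth} $\psi_i\in\Ham(\Sigma)$ moving $D_i$ off $\underline\alpha$, and estimates
\[
\gamma_{\underline L^k}(\varphi)\leqslant\sum_i\bigl(\gamma_{\underline L^k}(\psi_i\varphi_i\psi_i^{-1})+2\gamma_{\underline L^k}(\psi_i)\bigr).
\]
The first summand is bounded by $\tfrac{k+1}{k(k+g)}$ via Theorem \ref{thm:localqm} (the defect bound is a closed inequality, hence passes to the $C^0$-extension $f_{\underline L^k}$), and the second tends to zero by the Calabi property (Proposition \ref{prop:Calabi}) because each $\psi_i$ is smooth. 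No uniformity in $k$ of the $C^0$-modulus is needed.
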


\begin{proof}
    Fix $\varphi \in \bHam(\Sigma)$. Using a standard fragmentation result (see for instance \cite{Banyaga} or \cite{S} for a more quantitative version), one can find $\varphi_1,...,\varphi_n$ supported in disks $D_1,...,D_n$ such that $\varphi = \varphi_1\circ ...\circ \varphi_n$.
    For each $1\leqslant i \leqslant n$, we pick a smooth $\psi_i$ sending $D_i$ to a disk that does not intersect $\underline\alpha:=\alpha_1\cup ..\cup,\alpha_g$.

Then we have, using the triangle inequality:

\begin{align*}0\leqslant \gamma_{\underline L^k}(\varphi) &\leqslant \sum \gamma_{\underline L^k}(\varphi_i)\\
&\leqslant \sum (\gamma_{\underline L^k}(\psi_i\varphi_i\psi_i^{-1})+\gamma_{\underline L^k}(\psi_i)+\gamma_{\underline L^k}(\psi_i^{-1}))\\
&\leqslant \sum (\gamma_{\underline L^k}(\psi_i\varphi_i\psi_i^{-1})+2\gamma_{\underline L^k}(\psi_i))
\end{align*}
Since for all $i$, $\psi_i\varphi_i\psi_i^{-1}$ is supported in a disk away from $\underline\alpha$, we can apply Theorem \ref{thm:localqm} and Remark \ref{rk:lambda} to get that $\gamma_{\underline L^k}(\psi_i\varphi_i\psi_i^{-1})\leqslant \frac {k+1} {k+g} \lambda\leqslant \frac {k+1}{k(k+g)}$, and hence this term goes to zero.

As for the other terms, since the $\psi_i$ are smooth, the Calabi property (Proposition \ref{prop:Calabi}) implies that $\gamma_{\underline L^k}(\psi_i)$ goes to $\Cal(\psi_i)+\Cal(\psi_i^{-1}) = 0$.
Thus, $\gamma_k(\varphi)$ goes to zero for any $\varphi$, and hence $f$ is a group homomorphism.
\end{proof}

For $\varphi$ smooth, we have $f(\varphi) = (\Cal(\varphi), \Cal(\varphi),...)$ since $f_{\underline L^k}(\varphi)$ converges to $\Cal(\varphi)$. Let $\Delta$ denote the vector $(1,1,1,....)$ in $\mathbb R^{\mathbb N}/\sim$. Using Zorn's lemma, we complete this vector into a base $(a_1=\Delta, a_2,a_3,...)$ of $\mathbb R^{\mathbb N}/\sim$. Now let $s$ be the following map :
\begin{align*} \mathbb R^{\mathbb N}/\sim &\to \mathbb R\\
\sum \lambda_i a_i &\mapsto \lambda_1
\end{align*}
Then $s\circ f$ is a group homomorphism from $\overline{\Ham}(\Sigma)$ to $\mathbb R$ that extends $\Cal$.
This completes the proof of Theorem \ref{thm:extensiongeneral}.

\subsection{Proof of Theorem \ref{thm:notsimplegeneral}}
We now give a proof of Theorem \ref{thm:notsimplegeneral}. Once again, it is inspired of the proof of (\ref{thm:notsimple}) found in \cite{CGHMSS2}.

We start by fixing an equidistributed sequence of links $\underline L^k = L^k_1\cup ...\cup L^k_k \cup\alpha_1\cup ...\cup\alpha_g$, such that $\diam(\underline L^k)=\operatorname{O}\left(\frac 1 {\sqrt k}\right)$.
We define
\[N((\underline L^k)_{k\in\N^*}):=\lbrace\varphi\in \Hameo(\Sigma,\omega)|\sqrt k(f_{\underline L^k} \varphi)-\Cal(\varphi)) \text{ is bounded} \rbrace\]
We claim:

\begin{prop}
\label{prop:normalsubgrp}
$N((\underline L^k)_{k\in\N^*})$ is a normal sub-group of $\Hameo(\Sigma,\omega)$
\end{prop}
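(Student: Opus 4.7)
The plan is to verify three conditions on $N:=N((\underline L^k)_{k\in\N^*})$: closure under products, closure under inverses, and stability under conjugation by arbitrary elements of $\Hameo(\Sigma,\omega)$. Setting $s_k(\varphi):=\sqrt k(f_{\underline L^k}(\varphi)-\Cal(\varphi))$, each of these reductions follows from a single quantitative estimate: for every $\varphi\in\bHam(\Sigma)$ the sequence $\sqrt k\,\gamma_{\underline L^k}(\varphi)$ is bounded, where $\gamma_{\underline L^k}(\varphi):=f_{\underline L^k}(\varphi)+f_{\underline L^k}(\varphi^{-1})$.

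To obtain the key estimate I would revisit the fragmentation argument of Proposition \ref{p:gammabound} and record the rate. Writing $\varphi=\varphi_1\cdots\varphi_n$ with each $\varphi_i$ supported in a disk $D_i$, and choosing smooth $\psi_i$ that send $D_i$ off $\underline{\alpha}:=\alpha_1\cup\dots\cup\alpha_g$, the same triangle inequality used there yields
\[
\gamma_{\underline L^k}(\varphi)\leq\sum_{i=1}^n\gamma_{\underline L^k}(\psi_i\varphi_i\psi_i^{-1})+2\sum_{i=1}^n\gamma_{\underline L^k}(\psi_i).
\]
Theorem \ref{thm:localqm} together with Remark \ref{rk:lambda} bounds each conjugated term by $\tfrac{k+1}{k(k+g)}=O(1/k)$, while Proposition \ref{prop:Calabi} bounds each $\gamma_{\underline L^k}(\psi_i)$ by $O(\diam(\underline L^k))=O(1/\sqrt k)$, the last equality being the rate built into the equidistributed sequence fixed at the start of the subsection. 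Since $n$ depends only on $\varphi$, the total is $O(1/\sqrt k)$, so $\sqrt k\,\gamma_{\underline L^k}(\varphi)$ is bounded.

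Granted this, the three conditions are formal. For products of $\varphi,\psi\in N$, the triangle inequality for $f_{\underline L^k}$ on $\bHam(\Sigma)$ (valid by continuity, together with the fact that $\Cal$ is a homomorphism) shows
\[
f_{\underline L^k}(\varphi\psi)-f_{\underline L^k}(\varphi)-f_{\underline L^k}(\psi)\in[-\gamma_{\underline L^k}(\varphi),0],
\]
so $s_k(\varphi\psi)=s_k(\varphi)+s_k(\psi)+O(1)$. For inverses, $\Cal(\varphi^{-1})=-\Cal(\varphi)$ gives $s_k(\varphi^{-1})=\sqrt k\,\gamma_{\underline L^k}(\varphi)-s_k(\varphi)$. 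For conjugation by $\psi\in\Hameo(\Sigma,\omega)$, conjugation-invariance of $\Cal$ reduces matters to bounding $\sqrt k(f_{\underline L^k}(\psi\varphi\psi^{-1})-f_{\underline L^k}(\varphi))$; applying the triangle inequality twice, once directly to $\psi\varphi\psi^{-1}$ and once after rewriting $\varphi=\psi^{-1}(\psi\varphi\psi^{-1})\psi$, yields
\[
|f_{\underline L^k}(\psi\varphi\psi^{-1})-f_{\underline L^k}(\varphi)|\leq\gamma_{\underline L^k}(\psi),
\]
which is $O(1/\sqrt k)$ by the key estimate applied to $\psi$.

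The main obstacle is upgrading the qualitative $\gamma_{\underline L^k}(\varphi)\to 0$ of Proposition \ref{p:gammabound} to the quantitative rate $O(1/\sqrt k)$. However, the ingredients of that proof already provide an $O(1/k)+O(\diam(\underline L^k))$ bound, so the only new input is the design of $(\underline L^k)$ to satisfy $\diam(\underline L^k)=O(1/\sqrt k)$, exactly the normalization built into the definition of the equidistributed sequence chosen at the start of the subsection. Everything else is bookkeeping with the triangle inequality and the homomorphism property of $\Cal$.
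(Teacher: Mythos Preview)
Your proposal is correct and follows essentially the same approach as the paper: reduce the three group axioms to the single quantitative estimate that $\sqrt k\,\gamma_{\underline L^k}(\varphi)$ is bounded for every $\varphi\in\bHam(\Sigma)$, and prove that estimate via fragmentation, Theorem \ref{thm:localqm} with Remark \ref{rk:lambda}, and Proposition \ref{prop:Calabi} using the hypothesis $\diam(\underline L^k)=O(1/\sqrt k)$. The only cosmetic difference is that the paper isolates the key estimate as a separate Lemma \ref{lem:gamma} and uses $\theta$ for the conjugating element, whereas you reuse $\psi$; the inequalities and their derivations are otherwise identical.
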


\begin{proof}
    Let $\varphi,\psi\in N((\underline L^k)_{k\in\N^*})$, and $\theta\in\Hameo(\Sigma)$. Then, by the triangle inequality and the fact that $\Cal$ is a homomorphism,
    \[-\sqrt k \gamma_{\underline L^k}(\varphi)\leqslant \sqrt k(f_{\underline L^k}(\varphi\psi)-\Cal(\varphi\psi))- \sqrt k(f_{\underline L^k}(\varphi)-\Cal(\varphi))- \sqrt k(f_{\underline L^k}(\psi)-\Cal(\psi))\leqslant 0\]

    Moreover, we have that    
    \[\sqrt k (f_{\underline L^k}(\varphi^{-1})-\Cal(\varphi^{-1}))=\sqrt k\gamma_{\underline L^k}(\varphi) - \sqrt k(f_{\underline L^k}(\varphi)-\Cal(\varphi))\]
    
    and the triangle inequality also implies that    
    \[-\sqrt k \gamma_{\underline L^k}(\theta)\leqslant\sqrt k(f_{\underline L^k}(\varphi)-f_{\underline L^k}(\theta\varphi\theta^{-1}))\leqslant \sqrt k \gamma_{\underline L^k}(\theta)\]

    Therefore, the following lemma proves that $\varphi\psi$, $\varphi^{-1}$ and $\theta\varphi\theta^{-1}$ are in $N((\underline L^k)_{k\in\N^*})$, which concludes the proof of the proposition.
    
    \begin{lemma}
        \label{lem:gamma}
        For all $\varphi\in\bHam(\Sigma)$, $( \sqrt k \gamma_{\underline L^k}(\varphi))$ is bounded.
    \end{lemma}

    \begin{proof}



        Writing $\varphi = \varphi_1...\varphi_N$ where each $\varphi_i$ is supported in a disk $D_i$, and choosing some $\psi_i$ displacing $D_i$ away from $\underline \alpha$, we get as in the proof of Proposition \ref{p:gammabound} that
\begin{align*} \sqrt k\gamma_{\underline L^k}(\varphi)&\leqslant \sqrt k\sum\limits_{i=1}^N(\gamma_{\underline L^k}(\psi_i\varphi_i\psi_i^{_1})+2\gamma_{\underline L^k}(\psi_i))\\
&\leqslant \sqrt k\sum\limits_{i=1}^N\left(\frac {k+1} {k(k+g)} + \operatorname O\left(\frac 1 { \sqrt k}\right)\right)\\
&\leqslant \frac{k+1}{\sqrt k(k+g)}N+\operatorname O(1)
\end{align*}
where we used the Calabi property (Proposition \ref{prop:Calabi}), with $\diam(\underline L^k)=\operatorname{O}\left(\frac 1 {\sqrt k}\right)$.
This shows that $\sqrt k\gamma_{\underline L^k}(\varphi)$ is bounded for every $\varphi$.
    \end{proof}

\end{proof}

\begin{remark}We see in this proof that the terms $\gamma_{\underline L^k}(\psi_i)$ are of higher order than the other ones. If we manage to show that for smooth elements, $k\gamma_{\underline L^k}$ is bounded, then it would be the case for all $\varphi\in\bHam(\Sigma)$, and we could define an even smaller normal subgroup by considering $k\gamma_{\underline L^k}$ instead of $\sqrt k\gamma_{\underline L^k}$.
\end{remark}

\medskip
It remains to show that for a certain choice of $(\underline L^k)$, this subgroup is proper. The proof is similar to the one in \cite{CGHMSS2} in  the case of the disk. There are three steps:

\begin{itemize}
\item We show that $N((\underline L^k)_{k\in\N^*})\cap \Ker(\Cal)$ contains all the smooth elements;
\item We construct a hameomorphism $T$, and choose an equidistributed sequence $(\underline L^k)$, such that $T$ does not belong to $N((\underline L^k)_{k\in\N^*})$.
\item From this $T$ we can construct another hameomorphism with the same property that lies in $\Ker(\Cal)$.
\end{itemize}

\begin{lemma}
\label{lem:smooth}
$N((\underline L^k)_{k\in\N^*})\cap \Ker(\Cal)$ contains all the smooth elements.
\end{lemma}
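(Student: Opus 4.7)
The plan is to observe that the lemma reduces immediately to the Calabi property (Proposition \ref{prop:Calabi}), combined with the decay rate $\diam(\underline L^k) = \operatorname{O}(1/\sqrt k)$ that is built into the choice of equidistributed sequence at the start of this subsection. No new input is needed.

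First I would fix a smooth $\varphi \in \Ham(\Sigma) \cap \Ker(\Cal)$, which lies in $\Hameo(\Sigma)$ automatically. From the definition $f_{\underline L^k} := c_{\underline L^k} + \Cal$, the quantity appearing in the definition of $N((\underline L^k)_{k \in \N^*})$ simplifies to
\[
\sqrt k \bigl( f_{\underline L^k}(\varphi) - \Cal(\varphi) \bigr) \;=\; \sqrt k\, c_{\underline L^k}(\varphi),
\]
so the task reduces to showing that $\sqrt k\, c_{\underline L^k}(\varphi)$ is bounded in $k$.

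Next I would apply the "in particular" half of Proposition \ref{prop:Calabi} to $\varphi$: for any smooth Hamiltonian diffeomorphism $\varphi$, we have $c_{\underline L^k}(\varphi) = \operatorname{O}(\diam \underline L^k)$ as $k \to \infty$, with an implicit constant that depends on $\varphi$ only through $\sup_{S^1 \times \Sigma} \|dH_t\|$ for a generating Hamiltonian $H$. Since the sequence $(\underline L^k)$ was chosen with $\diam(\underline L^k) = \operatorname{O}(1/\sqrt k)$, this yields $\sqrt k\, c_{\underline L^k}(\varphi) = \operatorname{O}(1)$, hence bounded. Therefore $\varphi \in N((\underline L^k)_{k \in \N^*})$, and combined with the hypothesis $\varphi \in \Ker(\Cal)$ this gives $\varphi \in N((\underline L^k)_{k \in \N^*}) \cap \Ker(\Cal)$.

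This is a one-step argument and I do not expect a real obstacle. The one point that deserves care, and that is already absorbed into Proposition \ref{prop:Calabi}, is that $c_{\underline L^k}(\varphi)$ is evaluated on the mean-normalized generating Hamiltonian, so the integral term $\int_{S^1}\!\int_\Sigma H_t\,\omega\,dt$ from the first half of the Calabi property vanishes instead of producing a $\Cal(\varphi)$ offset; this is precisely why the right-hand side decays in $k$.
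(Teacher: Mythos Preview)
Your proof is correct and follows exactly the paper's approach: the paper's proof consists of the single sentence ``It is a corollary of the Calabi property (Proposition \ref{prop:Calabi}),'' and you have simply spelled out that corollary, using $f_{\underline L^k}(\varphi)-\Cal(\varphi)=c_{\underline L^k}(\varphi)$ together with the decay $\diam(\underline L^k)=\operatorname{O}(1/\sqrt k)$.
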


\begin{proof}


It is a corollary of the Calabi property (Proposition \ref{prop:Calabi}).
\end{proof}

Now we construct the hameomorphism and the sequence of links.
Fix $g$ disjoint and homologically independant circles $\alpha_1,..., \alpha_g$. Let $D$ be a disk of area $1/2$ in $\Sigma$ away from $\underline \alpha$, and pick a point $z_0$ in its interior. We fix a symplectomorphism $\Phi: (D\setminus\{z_0\},\omega)\xrightarrow{\sim} \left(S^1\times \left(0,\frac 1 {\sqrt{2\pi}}\right],rdr\wedge d\theta\right)$.

We define an autonomous Hamiltonian $H$ on $\Sigma\setminus\{z_0\}$ as follow:

\begin{itemize}
    \item $H$ is supported inside $\Phi^{-1}\left(S^1\times \left(0,\frac 1 {2\sqrt{\pi}}\right]\right)\subset D\setminus\{z_0\}$;
    \item $H(\theta,r)=h(\pi r^2)$ is radial;
    \item $h: \left(0,\frac 1 4\right]\to [0,+\infty)$ is decreasing, $h(r)\leqslant r^{-a}$ with equality on $\left(0,\frac 1 8\right]$ for some $\frac 1 2 + \frac 1 {2\sqrt{2}}< a< 1$.
\end{itemize}

Then, $\varphi^1_H$ defines a Hamiltonian diffeomorphism on $\Sigma\setminus\{z_0\}$ which acts as a rotation around the origin inside $D\setminus\{z_0\}$. Therefore, it extends continuously to a homeomorphism $T$ that fixes $z_0$. We claim the following:

\begin{prop}
    $T\in\Hameo(\Sigma)$
\end{prop}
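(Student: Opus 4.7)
The plan is to construct a sequence of smooth Hamiltonians $(H_i)$ supported in the interior of $\Sigma$, together with an isotopy $\{\psi^t\}_{t\in[0,1]}$ in $\bHam(\Sigma)$ from $\Id$ to $T$, such that $\phi_{H_i}^t \to \psi^t$ in $C^0$ uniformly in $t$ and $(H_i)$ is Cauchy in the Hofer norm. Define $\psi^t$ as the continuous extension to $\Sigma$ of $\phi_H^t|_{\Sigma\setminus \{z_0\}}$ by $\psi^t(z_0) := z_0$; the rotational symmetry of $H$ forces the displacement on the circle of radius $r$ around $z_0$ to be at most $2r$, which gives continuity of $\psi^t$ at $z_0$, and also continuity of $t\mapsto\psi^t$. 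Hence $\{\psi^t\}$ is an isotopy of homeomorphisms with $\psi^1 = T$, and it lies in $\bHam(\Sigma)$ since each $\psi^t$ is a $C^0$-limit of smooth Hamiltonian diffeomorphisms on $\Sigma\setminus\{z_0\}$.

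The main challenge is in choosing $(H_i)$. A direct truncation (e.g.\ $H_i := \min(H, M_i)$ with $M_i \to \infty$, or $H_i := \rho_i H$ for a spatial cutoff $\rho_i$ vanishing in a shrinking neighborhood $V_i = \{\pi r^2 \leq s_i\}$ of $z_0$) produces the $C^0$-convergence of flows but fails the Hofer-Cauchy condition: since $H$ is singular at $z_0$ while vanishing on $\partial(\supp H)$, any such $H_i$ has $\osc H_i \gtrsim h(s_i) \to \infty$. The resolution exploits the $2\pi$-periodicity of rotations on circles around $z_0$: near $z_0$ one replaces $H$ by a smooth time-dependent Hamiltonian whose angular velocity at each radius agrees with $h'$ modulo $2\pi$, giving the same time-$1$ map on each circle while keeping the oscillation on $V_i$ bounded by roughly $2\pi s_i$. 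Because this mod-$2\pi$ modification alters the flow at intermediate times $t \in (0,1)$, the gluing with the outer region $\{H_i = H\}$ must be performed in a shrinking annulus so that the resulting intermediate-time discrepancy is confined to circles of radius tending to $0$.

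The $C^0$-convergence $\phi_{H_i}^t \to \psi^t$ then follows: outside the modification region the two flows agree, while inside both $\phi_{H_i}^t$ and $\psi^t$ preserve the shrinking disk $V_i$ by rotational symmetry, bounding the displacement by $\diam V_i \to 0$ uniformly in $t$. The Hofer-Cauchy property requires a quantitative version of the construction, and this is the main obstacle. The assumption $\tfrac{1}{2} + \tfrac{1}{2\sqrt{2}} < a < 1$ is expected to provide the precise threshold at which the cut-off scales, annular width, and interpolation profile can be tuned so that $\vv H_i - H_j\vvh \to 0$; it balances the rate at which the singularity of $H$ can be cut off (for $C^0$-approximation of $T$) against the oscillation cost of capturing the residual rotation in the inner region.
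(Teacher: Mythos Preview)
Your proposal is a plan rather than a proof: the Hofer--Cauchy estimate, which is the entire content of the statement, is not carried out. You correctly diagnose that naive truncation fails and that the $2\pi$-periodicity of circle rotations is the mechanism to exploit, but the mod-$2\pi$ modification you describe meets a concrete obstacle you do not address. Since $h'(s)=-as^{-a-1}$ crosses every integer multiple of $2\pi$ as $s\to 0$, its reduction modulo $2\pi$ is discontinuous with infinitely many jumps accumulating at $z_0$; any smooth radial Hamiltonian realizing it must be smoothed across each jump, and you give no estimate on the accumulated Hofer cost of these smoothings, nor on how $\|H_{i+1}-H_i\|_{\mathrm{Hof}}$ behaves under your construction. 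Saying that the quantitative step ``is the main obstacle'' is not a substitute for doing it.

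The paper avoids this by quoting \cite[Lemma~4.5]{CGHMSS2}, which packages the periodicity/displacement trick once and for all: if $F$ is supported in a sub-disk of area $<A/N$ inside a disk $\Delta$ of area $A$, then $d_H(\phi^1_F,\Id)\le \|F\|_{\mathrm{Hof}}/N + 2A$. One first flattens $H$ on shrinking disks $D_n$ of area $2^{-n/a}$ to get $H_n$ with $\phi^1_{H_n}\xrightarrow{C^0}T$ and $\|H_{n+1}-H_n\|_{\mathrm{Hof}}\lesssim 2^n$; then, applying the lemma to $H_{n+1}-H_n$ inside a surrounding disk $\Delta_n$ of area $\asymp 2^{\lfloor bn\rfloor}\cdot 2^{-n/a}$ with $N=2^{\lfloor bn\rfloor}$ and any $1<b<1/a$, one obtains $G_n$ with $\phi^1_{G_n}=\phi^{-1}_{H_n}\phi^1_{H_{n+1}}$ and $\sum_n\|G_n\|_{\mathrm{Hof}}<\infty$. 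Setting $K_{n+1}=K_n\#G_n$ produces a Hofer--Cauchy sequence with $\phi^1_{K_n}=\phi^1_{H_n}$, and the uniform $C^0$-Cauchy property of the flows follows from $\operatorname{diam}\Delta_n\to 0$. This is a single clean inequality in place of the infinite annular bookkeeping your sketch would require.

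One factual point: the lower bound $a>\tfrac12+\tfrac{1}{2\sqrt2}$ plays no role in showing $T\in\Hameo(\Sigma)$. Only $a<1$ is used here (it is exactly what permits a choice $1<b<1/a$, and it also makes $\int_\Sigma H\,\omega$ finite). The lower bound on $a$ is used only later, in the estimates proving $T\notin N((\underline L^k)_{k\in\N^*})$.
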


\begin{proof}
    We have to find a sequence of Hamiltonians $(K_n)$, supported in a compact subset of the interior of $\Sigma$, such that:
    
    \begin{itemize}
        \item $\phi^1_{K_n}\xrightarrow{C^0}T$;
        \item $(\phi^t_{K_n})$ is Cauchy for the $C^0$ distance, uniformly in $t\in[0,1]$;
        \item the sequence $(K_n)$ is Cauchy for the Hofer norm.
    \end{itemize}

    Let $D_n:=\{z_0\}\cup\Phi^{-1}\left(S^1\times\left(0,\frac 1 {\sqrt {\pi 2^{n/a}}}\right)\right)\subset\Sigma$. It has area $\frac 1 {2^{n/a}}$.
    
    We start with a sequence of smooth Hamiltonians $(H_n)$ such that:
    \begin{itemize}
        \item $H_n$ coincides with $H$ outside of $D_n$;
        \item $H_n \approx 2^{n}$ in $D_n$
        \item $\vv H_{n+1}-H_n\vvh\leqslant 2^n$
    \end{itemize}
    To construct such a sequence, we flatten $H$ inside $D_n$.

    Since $H_n$ coincides with $H$ outside of $D_n$, we have that $\phi^1_{H_n}\circ T^{-1}=\Id$ outside of $D_n$, and therefore $\phi^1_{H_n}\xrightarrow{C^0}T$. 

    We will now construct a sequence $(K_n)$ such that $\phi^1_{K_n}=\phi^1_{H_n}$, $(K_n)$ is Cauchy for the Hofer norm, and $(\phi^t_{K_n})$ is Cauchy for the $C^0$ distance uniformly in $t$.

        We will use a lemma from \cite[Lemma 4.5]{CGHMSS2}:

    \begin{lemma}
        Let $\Delta$ be a Euclidean $2$-disk equipped with an area form $\omega$ of total area A.
    Suppose $D\subset\Delta$ is diffeomorphic to $D^2$ and that $\Area(D)<\frac A N$ some integer $N > 0$.
    Let F be a smooth Hamiltonian supported in the interior of $D$. Then, we have:
    \[d_{H}(\phi^1_F,\Id)\leqslant \frac {\vv F\vvh} N + 2A\]
    where $d_H$ denotes the Hofer distance on $\Ham_c(\Delta,\omega)$.
    \end{lemma}

    Let $b$ be a real number such that $1<b<\frac 1 a$. Let $N=2^{\left\lfloor bn\right\rfloor}$, and $A_n=(N+1)2^{-\frac n a}$. If $n\geqslant n_0$ where $n_0$ is large enough, $A<\frac 1 2$, and we can define $\Delta_n:=\{z_0\}\cup\Phi^{-1}\left(S^1\times\left(0,\frac 1 {\sqrt {\pi A_n}}\right)\right)$. It is a disk of area $A_n$. $H_{n+1}-H_n$ is supported inside $D_n$, which has area $2^{-\frac n a}<\frac {A_n} N$, so we can apply the lemma and get that:

    \[d_{H}(\phi^1_{H_{n+1}-H_n},\Id)\leqslant \frac {\vv H_{n+1}-H_n\vvh} N + 2A_n\]

    Therefore there exists $G_n$ supported in $\Delta_n$ such that $\phi^1_{G_n} = \phi^1_{H_{n+1}-H_n} = \phi^{-1}_{H_n}\circ\phi^1_{H_{n+1}}$ and $\vv G_n\vvh \leqslant 2^{n-\left\lceil (1-b)n\right\rceil}+(2^{\left\lfloor bn\right\rfloor}+1)2^{-\frac a n}$.

    By definition of $b$, the series $\sum\limits_{n=n_0}^{\infty}\vv G_n\vvh$ is summable.
    Since $G_n$ is supported inside $\Delta_n$, $d_{C^0}(\phi^t_{G_n},\Id)\leqslant \diam \Delta_n = \operatorname O(2^{(b-\frac 1 a)n})$, so $\sum\limits_{n=n_0}^{\infty}d_{C^0}(\phi^t_{G_n},\Id)$ is also summable, uniformly in $t$.
    
    Then, we define $(K_n)$ recursively by:
    \begin{itemize}
        \item $K_n=H_n$ for $n\leqslant n_0$;
        \item $K_{n+1}=K_n\#G_n$ for $n\geqslant n_0$
    \end{itemize}

    We get that $\phi^1_{K_n}=\phi^1_{H_n}$ for $n\leqslant n_0$, and for $n> n_0$:
    \begin{align*}
        \phi^1_{K_n}&=\phi^1_{H_{n_0}}\phi^1_{G_{n_0}}...\phi^1_{G_{n-1}}\\
        &=\phi^1_{H_{n_0}}\phi^{-1}_{H_{n_0}}\phi^1_{H_{n_0+1}}...\phi^{-1}_{H_{n-1}}\phi^1_{H_{n}}\\
        &= \phi^1_{H_{n}}
    \end{align*}

    Moreover, the summability of $\sum\limits_{n=n_0}^{\infty}\vv G_n\vvh$ and $\sum\limits_{n=n_0}^{\infty}d_{C^0}(\phi^t_{G_n},\Id)$ implies that $(K_n)$ is Cauchy for the Hofer norm, and $(\phi^t_{K_n})$ is Cauchy for the $C^0$ distance uniformly in $t$.

    This concludes the proof that $T\in \Hameo(\Sigma)$.
    
\end{proof}

We now construct an equidistributed sequence of admissible links as follow:

Fix an integer $k\geqslant 1$.
For $0\leqslant i\leqslant \lfloor\frac {\sqrt {k}} 2\rfloor$, denote by $A_i$ the annulus $S^1\times \left(\frac i {\sqrt{\pi k}}, \frac {i+1} {\sqrt{\pi k}}\right)\subset D\setminus \{z_0\}$.

Let $L^k_1$ be the circle $S^1\times \left\{\sqrt{\frac {1}{\pi (k+1)}}\right\}$. It bounds a disk of area $\frac 1 {k+1}$.
For $i\geqslant 1$, each annulus $A_i$ has area $\frac{1} {k}((i+1)^2-i^2)=\frac{2i+1} k$, hence we can fit inside $A_i$ $2i+1$ disjoint circles $L^k_{i^2+1},..., L^k_{(i+1)^2}$ that bound disjoint disks of area $\frac{1}{k+1}$ and of diameter bounded by $\frac C {\sqrt k}$ where $C$ is a constant that does not depend on $k$.

The union of all the annuli cover a disk of area $\frac{\left(\lfloor\frac {\sqrt {k}} 2\rfloor+1\right)^2}{k}$. The remaining area in $\Sigma$ is $\frac{k-\left(\lfloor\frac {\sqrt {k}} 2\rfloor+1\right)^2}{k}$, which is enough to fit $k-\left(\lfloor\frac {\sqrt {k}} 2\rfloor+1\right)^2$ disjoint circles $L^k_{\left(\lfloor\frac {\sqrt {k}} 2\rfloor+1\right)^2+1}, ..., L^k_k$ that bound disjoint disks of area $\frac{1}{k+1}$ and of diameter bounded by $\frac {C'} {\sqrt k}$ where $C'$ is a constant that does not depend on $k$.

\begin{figure}
	\centering
	\includegraphics[scale=0.6]{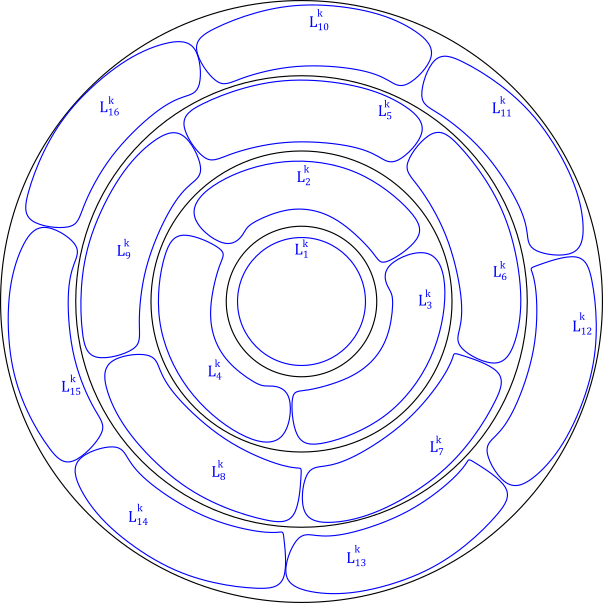}
	\caption{The sequence $\underline L^k$}
	\label{fig:Figure 3}
\end{figure}

Let $\underline L^k:= L^k_1\cup...\cup L^k_k\cup\alpha_1\cup...\alpha_g$. Then $(\underline L^k)$ is an equidistributed sequence of monotone Lagrangian links, and moreover:

\begin{prop}
    For this choice of equidistributed sequence, $T\notin N((\underline L^k)_{k\in\N^*})$.
\end{prop}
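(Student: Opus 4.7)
The plan is to show that $\sqrt{k}\,c_{\underline L^k}(T)=\sqrt{k}\,(f_{\underline L^k}(T)-\Cal(T))$ admits an unbounded subsequence by producing an upper bound on $c_{\underline L^k}(T)$ of order $-k^{a-1}$ with a \emph{negative} leading coefficient. The argument follows the disk-case proof of \cite[Theorem 1.3]{CGHMSS2}, with the globally defined quasimorphism used there replaced by the local one from Theorem \ref{thm:localqm}.

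Step 1 (reduction to $\mu_{\underline L^k}$). Since $H$ is supported in $D$ and $D\cap\underline\alpha=\emptyset$, the smooth approximants $\phi^1_{H_n}$ of $T$ all lie in $\Ham_D(\Sigma)$. Theorem \ref{thm:localqm} (combined with $\lambda\leqslant 1/(k+1)$ from Remark \ref{rk:lambda}) bounds the defect of $c_{\underline L^k}|_{\Ham_D(\Sigma)}$ by $\frac{1}{k+g}$. Iterating on powers, homogenizing, and using the Hofer-Cauchy property of the defining sequence $K_n$ for $T$ to pass to the $C^0$-limit, we obtain $\bigl|\,c_{\underline L^k}(T) - \mu_{\underline L^k}(T)\,\bigr| \leqslant \frac{1}{k+g}$. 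It therefore suffices to exhibit a negative upper bound of size $-k^{a-1}$ for $\mu_{\underline L^k}(T)$.

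Step 2 (Lagrangian control upper bound). Apply the Lagrangian control property for $\mu_{\underline L^k}$ to the mean-normalized autonomous Hamiltonian $H$:
\[\mu_{\underline L^k}(T) \;\leqslant\; \frac{1}{k}\sum_{j=1}^{k}\max_{L^k_j}H \;-\; \Cal(T).\]
On $L^k_1$ (placed at the area level $\pi r^2 = 1/(k+1)$), $H$ is constant equal to $(k+1)^a$; on each $L^k_j$ in the annulus $A_i$, $\max_{L^k_j}H\leqslant h(i^2/k)$ by radial monotonicity of $h$ and confinement of $L^k_j$ in $A_i$. Approximating the $L^k_j$'s by the area-equidistributed radial level sets $\{\pi r^2 = j/(k+1)\}$ and performing an Euler--Maclaurin expansion of $\sum_{j=1}^k h(j/(k+1))$, one gets
\[\frac{1}{k}\sum_{j=1}^{k}\max_{L^k_j}H \;-\; \Cal(T) \;\leqslant\; \frac{1-3a}{2(1-a)}\,k^{a-1} \;+\; o(k^{a-1}).\]
The leading coefficient $(1-3a)/(2(1-a))$ is strictly negative as soon as $a>1/3$, so in particular for our range $a>\tfrac12+\tfrac{1}{2\sqrt 2}$; thus $\mu_{\underline L^k}(T)\leqslant -C_a\, k^{a-1}$ for some $C_a>0$ and all sufficiently large $k$.

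Combining the two steps, $\sqrt{k}\,c_{\underline L^k}(T) \leqslant -C_a\, k^{a-1/2}+O(k^{-1/2})$, which tends to $-\infty$ because $a>\tfrac12$; this yields $T\notin N((\underline L^k)_{k\in\N^*})$. The main obstacle is the precise execution of Step 2: the Euler--Maclaurin expansion must be justified at the level of the Heegaard--Floer action spectrum of $\Sym^{k+g}\underline L^k$ rather than just on the individual factor $\Sigma$, and one must verify that replacing the actual (not fully radial) circles $L^k_j$ for $j\geqslant 2$ by the level sets $\{\pi r^2 = j/(k+1)\}$ introduces only errors of size $o(k^{a-1})$. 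The strengthening of the exponent condition from $a>\tfrac12$ to $a>\tfrac12+\tfrac{1}{2\sqrt 2}$ is what provides the margin needed both for this error absorption and for the summability estimates used in constructing the Hofer-Cauchy approximation $K_n$.
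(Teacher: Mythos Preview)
Your Step 1 is a detour the paper does not take: there the quantity $f_{\underline L^k}(T)-\Cal(T)$ is identified directly with $c_{\underline L^k}(H):=\lim_n c_{\underline L^k}(H_n)$ (since $H_n=H$ on $\underline L^k$ for $n$ large), and then the Lagrangian control inequality for $c_{\underline L^k}$ is applied to $H_n$ immediately. No local quasimorphism and no $\mu_{\underline L^k}$ are needed for this proposition. Your reduction is not wrong, but it adds an $O(1/k)$ error without buying anything.

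The real gap is in Step 2. Replacing the actual circles $L^k_j\subset A_i$ by the radial level sets $\{\pi r^2=j/(k+1)\}$ does \emph{not} introduce only $o(k^{a-1})$ errors. For the small annuli (say $i=1$, $j=2,3,4$) one compares $\max_{L^k_j}H\leqslant h(1/k)=k^a$ with $h(j/(k+1))\approx (k/j)^a$; the discrepancy is of order $k^a$ per term, hence $O(k^{a-1})$ after dividing by $k$ --- exactly the order of the leading term you are trying to isolate. Consequently your claimed coefficient $(1-3a)/(2(1-a))$ is not justified, and indeed cannot be correct: if it were, any $a>1/2$ would suffice, whereas the paper's careful estimate yields the coefficient $\dfrac{-8a^2+8a-1}{(2a-1)(1-a)}$, which is negative \emph{only} for $a>\tfrac12+\tfrac{1}{2\sqrt2}$. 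That threshold is the entire reason for the hypothesis on $a$; it is not, as you suggest, margin for absorbing lower-order errors. Nor can Theorem \ref{thm:mu} rescue the replacement, since concentric radial circles do not bound disjoint disks and hence do not form an admissible link.

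What the paper actually does in place of your Euler--Maclaurin heuristic is to split $\sum_i(2i+1)h(i^2/k)$ as $3h(1/k)+2\sum_{i\geqslant 2}h(i^2/k)+\sum_{i\geqslant 2}(2i-1)h(i^2/k)$ and observe that the last sum, after the shift $i\mapsto i+1$, equals $\sum_i(2i+1)\min_{A_i}H\leqslant k\int_{\Sigma\setminus A_0}H\omega$. Subtracting $\int_\Sigma H\omega$ then produces the crucial negative contribution $-\int_{A_0}H\omega=-\tfrac{1}{1-a}k^{a-1}$, which for $a$ close to $1$ dominates the positive terms. This rearrangement is the missing idea; a naive Riemann-sum or Euler--Maclaurin comparison of $\frac{1}{k}\sum_j\max_{L^k_j}H$ with $\int H\omega$ gives a \emph{positive} subleading coefficient and cannot conclude.
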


\begin{proof}
    We want to show that $\sqrt k(f_{\underline L^k}(T)-\Cal(T))$ is unbounded.

    First, we observe that:
    
\begin{align*}
    \Cal(T)&=\lim\limits_{n\to\infty}\Cal(\phi^1_{K_n})
    = \lim\limits_{n\to\infty}\Cal(\phi^1_{H_n})
    = \lim\limits_{n\to\infty} \int_\Sigma H_n \omega
    = \int_\Sigma H\omega
\end{align*}

and

\begin{align*}
    f_{\underline L^k}(T)=\lim\limits_{n\to\infty}f_{\underline L^k}(\phi^1_{K_n})
    = \lim\limits_{n\to\infty}f_{\underline L^k}(\phi^1_{H_n})
    = \lim\limits_{n\to\infty} c_{\underline L^k}(H_n)
\end{align*}

Since $H_n$ coincides with $H$ outside of $D_n$, for $n$ sufficiently large, $H_n$ coincides with $H$ on $\underline L^k$ and therefore $c_{\underline L^k}(H_n)=c_k(H)$. Thus, $f_{\underline L^k}(T)=c_{\underline L^k}(H)$.

    We start by estimating $c_{\underline L^k}(H)$. Since $H$ is supported inside $S^1\times \left(0,\frac 1 {2\sqrt{\pi}}\right]$, and by Lagrangian control:

    \begin{align*}
    c_{\underline L^k}(H) &\leqslant \frac 1 {k+g} \sum\limits_{i=1}^{\left(\lfloor\frac {\sqrt {k}} 2\rfloor+1\right)^2}\max\limits_{L^k_i}H\\
    &\leqslant \frac 1 {k} \left(\max\limits_{L^k_1}H+\sum\limits_{i=1}^{\lfloor\frac{\sqrt k}2\rfloor}(2i+1)\max\limits_{A_i}H\right)\\
    &\leqslant \frac 1 {k} \left(h\left(\frac 1 {k+1}\right)+\sum\limits_{i=1}^{\lfloor\frac{\sqrt k}2\rfloor}(2i+1)h\left(\frac {i^2} k\right)\right)\\
    &\leqslant \frac 1 {k}\left(h\left(\frac 1 {k+1}\right)+3h\left(\frac 1 {k}\right)+2\sum\limits_{i=2}^{\lfloor\frac{\sqrt k}2\rfloor}h\left(\frac {i^2} k\right)+\sum\limits_{i=2}^{\lfloor\frac{\sqrt k}2\rfloor}(2i-1)h\left(\frac {i^2} k\right)\right)\\
    \end{align*}
Using that $h$ is decreasing, and comparing the sums with integrals, we get:
    \begin{align*}
    c_{\underline L^k}(H) &\leqslant \frac{(k+1)^a} k +3k^{a-1}+\frac 2 {\sqrt k}\int_{\frac 1 {\sqrt k}}^{\frac 1 2}h(r^2)dr+\sum\limits_{i=1}^{\lfloor\frac{\sqrt k}2\rfloor-1}\frac{2i+1} k \min\limits_{A_i}H\\
    &\leqslant \left(\frac{k+1} k\right)^a k^{a-1} +3k^{a-1}+\frac 2 {\sqrt k}\left[\frac 1 {1-2a}r^{1-2a}\right]_{\frac 1 {\sqrt k}}^{\frac 1 2} + \int_{\Sigma\setminus A_0}H\omega\\
    &\leqslant \left(1+\frac{a} k\right) k^{a-1} +3k^{a-1}+\frac 2 {2a-1}k^{a-1} + \int_{\Sigma\setminus A_0}H\omega
    \end{align*}

    Therefore, for $k\geqslant 8$,

    \begin{align*}
    \sqrt k (f_{\underline L^k}(T)-\Cal(T))&=\sqrt k (c_{\underline L^k}(H)-\sqrt k\int_\Sigma H\omega)\\
    &\leqslant \left(4+\frac a k +\frac 2 {2a-1}\right)k^{a-\frac 1 2} -\sqrt k\int_{A_0}H\omega\\
    &\leqslant \left(4+\frac a k+\frac 2 {2a-1}\right)k^{a-\frac 1 2} -\sqrt k\int_0^{\frac 1 k}h(r)dr\\
    &\leqslant \left(4+\frac a k+\frac 2 {2a-1}\right)k^{a-\frac 1 2} - \sqrt k\frac 1 {1-a}k^{a-1}\\
    &\leqslant \left(4+\frac a k+\frac 2 {2a-1}-\frac 1 {1-a}\right)k^{a-\frac 1 2}\\
    &\leqslant \left(\frac{-8a^2+8a-1}{(2a-1)(1-a)}+\frac a k\right)k^{a-\frac 1 2}\\
    \end{align*}

    Since $\frac 1 2 + \frac 1 {2\sqrt{2}}<a<1$, $\frac{-8a^2+8a-1}{(2a-1)(1-a)}+\frac a k<0$ for $k$ large enough, and $\sqrt k (f_k(T)-\Cal(T))$ goes to $-\infty$.
    
\end{proof}

In a similar fashion, we also compute a lower bound for $c_{\underline L^k}(H)-\int_\Sigma H\omega$ (when $k$ is large enough) that we will need in the following section:

    \begin{align*}
    c_{\underline L^k}(H) -\int_\Sigma H\omega
    \geqslant& \frac 1 {k+g} \sum\limits_{i=1}^{\left(\lfloor\frac {\sqrt {k}} 2\rfloor+1\right)^2}\min\limits_{L^k_i}H -\int_\Sigma H\omega\\
    \geqslant& \frac 1 {k+g} \left(\min\limits_{L^k_1}H+\sum\limits_{i=1}^{\lfloor\frac{\sqrt k}2\rfloor}(2i+1)\min\limits_{A_i}H\right)-\int_\Sigma H\omega\\
    \geqslant& \frac 1 {k+g} \left(h\left(\frac 1 {k+1}\right)+\sum\limits_{i=1}^{\lfloor\frac{\sqrt k}2\rfloor}(2i+1)h\left(\frac {(i+1)^2} k\right)\right)-\int_\Sigma H\omega\\
    \geqslant& \frac 1 {k+g}\left(h\left(\frac 1 {k+1}\right)-3h\left(\frac 1 {k}\right)-2\sum\limits_{i=2}^{\lfloor\frac{\sqrt k}2\rfloor}h\left(\frac {i^2} k\right)+\sum\limits_{i=1}^{\lfloor\frac{\sqrt k}2\rfloor}(2i+1)h\left(\frac {i^2} k\right)\right)-\int_\Sigma H\omega\\
    & \frac k {k+g}\left(\frac{(k+1)^a} k -3k^{a-1}-\frac 2 {\sqrt k}\int_{\frac 1 {\sqrt k}}^{\frac 1 2}h(r^2)dr+\sum\limits_{i=1}^{\lfloor\frac{\sqrt k}2\rfloor-1}\frac{2i+1} k \max\limits_{A_i}H-\int_\Sigma H\omega\right)\\
    &-\frac g {k+g}\int_\Sigma H\omega\\
            \end{align*}
            Once again, we used that $h$ is decreasing and compared the first sum with an integral. Doing the same for the second sum, we obtain:
\begin{align*}
    c_{\underline L^k}(H) -\int_\Sigma H\omega\geqslant& \frac 1 {1+\frac g k}\left(\left(\frac{k+1} k\right)^a k^{a-1} -3k^{a-1}-\frac 2 {\sqrt k}\left[\frac 1 {1-2a}r^{1-2a}\right]_{\frac 1 {\sqrt k}}^{\frac 1 2} + \int_{\Sigma\setminus A_0}H\omega-\int_\Sigma H\omega\right)\\
    &-\frac g {k}\int_0^{\frac 1 4} h(r)dr\\
    \geqslant& \left(1-\frac g k\right)\left(\left(1+\frac{1} k\right)^a k^{a-1} -3k^{a-1}-\frac 2 {2a-1}k^{a-1} - \int_{A_0}H\omega\right)-\frac g {k}\int_0^{\frac 1 4}r^{-\alpha}dr\\
    \geqslant& \left(1-\frac g k\right)\left(\left(1 - 3 -\frac 2 {2a-1}\right)k^{a-1} -\int_0^{\frac 1 k}h(r)dr\right)-\frac g {k}\left[\frac {r^{1-\alpha}}{1-\alpha}\right]^{\frac 1 4}_0\\
    \geqslant& \left(1-\frac g k\right)\left(\left(-2-\frac 2 {2a-1}\right)k^{a-1} -\frac 1 {1-a}k^{a-1}\right)-\frac {4^{\alpha-1}g} {(1-\alpha)k}\\
    \geqslant& \left(1-\frac g k\right)\left(-2-\frac 2 {2a-1}-\frac 1 {1-a}\right)k^{a-1}-\frac {4^{\alpha-1}g} {(1-\alpha)k}\\
    \end{align*}

Now, it remains to construct a hameomorphism in $\Ker(\Cal)$. Choose a smooth Hamiltonian diffeomorphism $\theta$ such that $\Cal(\theta)=\Cal(T)$. Then, $T':=T\theta^{-1}\in\Hameo(\Sigma)\cap\Ker(\Cal)$.
Since $\theta$ is smooth, by Lemma \ref{lem:smooth}, $\theta\in N(\Sigma)$, and therefore $T'\notin N((\underline L^k)_{k\in\N^*})$.

Hence, $N((\underline L^k)_{k\in\N^*})\cap\Ker(\Cal)$ is a proper normal subgroup of $\Hameo(\Sigma)\cap\Ker(\Cal)$, which concludes the proof of Theorem \ref{thm:notsimplegeneral}.

We give more precision on the subgroups we defined:

Note that since the Hamiltonian $H$ we constructed in this section is radial, we have $H^{\# n}=nH$, and therefore
\[\mu_{\underline L^k}(H) = \lim\limits_{n\to\infty}\frac {c_{\underline L^k}(H^{\# n})} n = c_{\underline L^k}(H)\]

Then, by Theorem \ref{thm:mu}, $c_{\underline L^k}(H)$ only depends on $k$, $\underline \alpha$ and the monotonicity constant of the link. Therefore, for any other equidistributed sequence of links $(\underline L'^k)$ having the same non-contractible components $\underline \alpha$, and the same monotonicity constant, we have that $T \notin N_{(\underline L'^k)}$.

Since we fixed an arbitrary $\underline \alpha$ at the start of the proof, and since we could modify the definition of $(\underline L^k)$ to change its monotonicity constant while keeping similar inequalities for $c_{\underline L^k}(H)$, we get the following proposition:

\begin{prop}Let $(\underline L'^k)$ be an equidistributed sequence of admissible links, satisfying $\diam(\underline L'^k)=\operatorname{O}\left(\frac 1 {\sqrt k}\right)$. Then, $N((\underline L'^k)_{k\in\N^*})$ is a proper normal subgroup of $\Hameo(\Sigma)\cap\Ker(\Cal)$.

Moreover, taking the intersection of those subgroups over all such sequences of links, we get an even smaller proper normal subgroup $N$, which contains all smooth elements.
\end{prop}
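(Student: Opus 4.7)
The plan is to adapt the argument just given for the specific sequence $(\underline L^k)$ to an arbitrary equidistributed sequence $(\underline L'^k)$ satisfying the diameter hypothesis, using Theorem \ref{thm:mu} to reduce properness to estimates already in hand; the statement about the intersection $N$ then follows from formal properties of intersections.

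Fix such a sequence $(\underline L'^k)$ with shared non-contractible components $\underline\alpha'$, and let $\eta'_k$ be the parameter for which $\underline L'^k$ is $\eta'_k$-monotone; by Remark \ref{rk:lambda} and equidistribution, the monotonicity constant of $\underline L'^k$ is of order $1/k$. Normality of $N((\underline L'^k))$ and its containment of the smooth elements of $\Ker(\Cal)$ transfer verbatim from Proposition \ref{prop:normalsubgrp} and Lemma \ref{lem:smooth}, since the bound in Lemma \ref{lem:gamma} only uses the $O(1/\sqrt k)$ diameter hypothesis and the Calabi property. To establish properness, I would mimic the preceding construction: choose a disk $D \subset \Sigma$ disjoint from $\underline\alpha'$ together with a center $z_0 \in D$ avoiding all the $\underline L'^k$, and build the radial autonomous Hamiltonian $H$ and the associated hameomorphism $T \in \Hameo(\Sigma)$ inside $D$ around $z_0$ exactly as before.

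Because $H$ is autonomous and radial, $H^{\#n} = nH$, so for every $k$
\[
    f_{\underline L'^k}(T) - \Cal(T) \;=\; \mu_{\underline L'^k}(H) \,-\, \int_\Sigma H\,\omega.
\]
By Theorem \ref{thm:mu}, $\mu_{\underline L'^k}(H) = \mu_{k,\eta'_k,[\underline\alpha']}(H)$ depends on $\underline L'^k$ only through its number of components, the parameter $\eta'_k$, and the isotopy class $[\underline\alpha']$. I would then build a companion sequence $(\underline{\tilde L}^k)$ by running the explicit nested-annulus construction from the preceding proof inside $D$ around $z_0$, rescaled so that it is admissible with the same invariants $(k,\eta'_k,\underline\alpha')$ as $(\underline L'^k)$; this is possible because matching the monotonicity constant amounts to prescribing the common area of the bounding disks, and the resulting diameter is automatically $O(1/\sqrt k)$ since that area is $\Theta(1/k)$. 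The upper and lower bounds on $c_{\underline L^k}(H) - \int_\Sigma H\,\omega$ derived in the preceding proof depend only on this $\Theta(1/k)$ asymptotic behavior and transfer to $(\underline{\tilde L}^k)$ with the same leading coefficients, giving $\sqrt k(f_{\underline{\tilde L}^k}(T) - \Cal(T)) \to -\infty$. Via Theorem \ref{thm:mu} the same limit holds for $(\underline L'^k)$, so $T \notin N((\underline L'^k))$. Setting $T' := T\theta^{-1}$ for a smooth $\theta$ with $\Cal(\theta) = \Cal(T)$ then produces $T' \in (\Hameo(\Sigma) \cap \Ker(\Cal)) \setminus N((\underline L'^k))$, since $\theta \in N((\underline L'^k))$ by Lemma \ref{lem:smooth}.

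For the moreover part, $N$ is normal in $\Hameo(\Sigma) \cap \Ker(\Cal)$ as an intersection of normal subgroups, contains every smooth element because each factor does, and is properly contained in $\Hameo(\Sigma) \cap \Ker(\Cal)$ since it sits inside the proper subgroup $N((\underline L^k))$ already produced. The main technical obstacle is precisely the observation that the estimates on $c_{\underline L^k}(H)$ from the preceding proof really depend only on the invariants $(k,\eta,[\underline\alpha])$ and not on the particular nested-annulus layout; this is made rigorous by invoking Theorem \ref{thm:mu} to replace $(\underline L'^k)$ by a companion sequence with the same invariants to which the explicit estimates directly apply.
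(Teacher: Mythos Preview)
Your proposal is correct and follows essentially the same route as the paper. Both arguments hinge on the observation that for the radial autonomous $H$ one has $H^{\#n}=nH$ and hence $c_{\underline L}(H)=\mu_{\underline L}(H)$, so that Theorem \ref{thm:mu} lets one replace an arbitrary equidistributed sequence by a companion nested-annulus sequence with the same invariants $(k,\eta,[\underline\alpha])$, to which the explicit upper and lower estimates already obtained apply verbatim; you are simply more explicit than the paper about building that companion sequence and about choosing $D$ and $z_0$ relative to $\underline\alpha'$.
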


\begin{remark}
    We need the assumption on the diameter to ensure that $N((\underline L'^k)_{k\in\N^*})$ is a normal subgroup (Proposition \ref{prop:normalsubgrp}).
\end{remark}

\subsection{Proof of Theorem \ref{thm:hameogeneral}}

This time we consider a connected, closed, oriented surface $(\Sigma,\omega)$.

Fix an equidistributed sequence of admissible links $(\underline L^k)$ with $\diam(\underline L^k)=\operatorname{O}\left(\frac 1 {\sqrt k}\right)$.

Fix $a$ such that $\frac 1 2 + \frac 1 {2\sqrt{2}}<a<1$.

Then, for $k_0$ large enough, $\frac 1 {1-a}-4-\frac a {2^{2k_0}}-\frac 2 {2a-1}>0$. Fix such a $k_0$, then for $N$ large enough, $-2-\frac 2 {2a-1}-\frac 1 {1-a}+2^{N(1-a)}\left(\frac 1 {1-a}-4-\frac a {2^{2k_0}}-\frac 2 {2a-1}\right)>0$.

Fix such an integer, and define $g_k:=c_{\underline L^{2^{2k}}}-c_{\underline L^{2^{2k-N}}}$.

Let $N((\underline L^k)_{k\in\N^*}):=\{\varphi \in \Hameo(\Sigma), (2^kg_k(\varphi)) \text{ is bounded}\}$

\begin{prop}
    $N((\underline L^k)_{k\in\N^*})$ is a normal subgroup of $\Hameo(\Sigma)$.
\end{prop}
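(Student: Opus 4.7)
The plan is to imitate the proof of Proposition \ref{prop:normalsubgrp}: replace the single quantity $\sqrt{k}(f_{\underline L^k}(\varphi)-\Cal(\varphi))$ used there by the difference $2^k g_k(\varphi)$ considered here, and show that the failure of $g_k$ to be a homomorphism (or to be conjugation-invariant) is controlled by $\gamma_{\underline L}$-terms coming from Lemma \ref{lem:gamma}, applied at both indices $2^{2k}$ and $2^{2k-N}$.

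First I would record the standard triangle-inequality consequences. For any spectral invariant $c_{\underline L}$ satisfying the triangle inequality and any $\varphi,\psi\in\bHam(\Sigma)$ one has
\begin{align*}
-\gamma_{\underline L}(\varphi)\leqslant c_{\underline L}(\varphi\psi)-c_{\underline L}(\varphi)-c_{\underline L}(\psi)\leqslant 0,
\end{align*}
and, for $\theta\in\bHam(\Sigma)$, $|c_{\underline L}(\theta\varphi\theta^{-1})-c_{\underline L}(\varphi)|\leqslant \gamma_{\underline L}(\theta)$, while $c_{\underline L}(\varphi)+c_{\underline L}(\varphi^{-1})=\gamma_{\underline L}(\varphi)$. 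Subtracting the corresponding inequality for $\underline L^{2^{2k}}$ from that for $\underline L^{2^{2k-N}}$ gives, for $g_k=c_{\underline L^{2^{2k}}}-c_{\underline L^{2^{2k-N}}}$, the three bounds
\begin{align*}
|g_k(\varphi\psi)-g_k(\varphi)-g_k(\psi)| &\leqslant \gamma_{\underline L^{2^{2k}}}(\varphi)+\gamma_{\underline L^{2^{2k-N}}}(\varphi),\\
|g_k(\varphi)+g_k(\varphi^{-1})| &\leqslant \gamma_{\underline L^{2^{2k}}}(\varphi)+\gamma_{\underline L^{2^{2k-N}}}(\varphi),\\
|g_k(\theta\varphi\theta^{-1})-g_k(\varphi)| &\leqslant \gamma_{\underline L^{2^{2k}}}(\theta)+\gamma_{\underline L^{2^{2k-N}}}(\theta).
\end{align*}

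Next I would invoke Lemma \ref{lem:gamma}, which asserts that $\sqrt{m}\,\gamma_{\underline L^m}(\chi)$ is bounded for every $\chi\in\bHam(\Sigma)$. Since $\Hameo(\Sigma)\subset\bHam(\Sigma)$, applying this at $m=2^{2k}$ and $m=2^{2k-N}$ gives $\gamma_{\underline L^{2^{2k}}}(\chi)=O(2^{-k})$ and $\gamma_{\underline L^{2^{2k-N}}}(\chi)=O(2^{-k}\cdot 2^{N/2})$. Since $N$ is a fixed integer, multiplying each of the three displayed bounds by $2^k$ yields a quantity that is bounded in $k$. It follows that if $\varphi,\psi\in N((\underline L^k)_{k\in\N^*})$, then $(2^k g_k(\varphi\psi))_k$ and $(2^k g_k(\varphi^{-1}))_k$ are bounded, so $\varphi\psi$ and $\varphi^{-1}$ lie in $N((\underline L^k)_{k\in\N^*})$; and for any $\theta\in\Hameo(\Sigma)$, $(2^k g_k(\theta\varphi\theta^{-1}))_k$ differs from $(2^k g_k(\varphi))_k$ by a bounded sequence, so $\theta\varphi\theta^{-1}$ lies in $N((\underline L^k)_{k\in\N^*})$. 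This proves that $N((\underline L^k)_{k\in\N^*})$ is a normal subgroup of $\Hameo(\Sigma)$.

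No significant obstacle is expected: the argument is purely a bookkeeping exercise combining the triangle inequality with the already-established $O(1/\sqrt{m})$ bound on $\gamma_{\underline L^m}$. The only point that merits attention is that the exponent choice in the definition $k\mapsto 2^{2k},\,2^{2k-N}$ is exactly what makes $\sqrt{m}$ equal to $2^k$ (up to a fixed factor $2^{N/2}$), so that the scaling by $2^k$ in the definition of $N((\underline L^k)_{k\in\N^*})$ is compatible with the $\sqrt{m}$-normalization appearing in Lemma \ref{lem:gamma}; the large parameter $N$ plays no role here and is only used elsewhere to show the subgroup is proper.
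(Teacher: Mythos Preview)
Your proposal is correct and follows essentially the same approach as the paper's own proof: both derive two-sided bounds on $g_k(\varphi\psi)-g_k(\varphi)-g_k(\psi)$, on $g_k(\varphi)+g_k(\varphi^{-1})$, and on $g_k(\theta\varphi\theta^{-1})-g_k(\varphi)$ in terms of $\gamma_{\underline L^{2^{2k}}}$ and $\gamma_{\underline L^{2^{2k-N}}}$, and then invoke Lemma~\ref{lem:gamma} together with the observation $\sqrt{2^{2k}}=2^k$, $\sqrt{2^{2k-N}}=2^{k}\cdot 2^{-N/2}$ to conclude. The only cosmetic difference is that the paper records the slightly sharper one-sided estimates (e.g.\ $-\gamma_{\underline L^{2^{2k}}}(\varphi)\leqslant g_k(\varphi\psi)-g_k(\varphi)-g_k(\psi)\leqslant \gamma_{\underline L^{2^{2k-N}}}(\varphi)$ and the exact identity $g_k(\varphi^{-1})=\gamma_{\underline L^{2^{2k}}}(\varphi)-\gamma_{\underline L^{2^{2k-N}}}(\varphi)-g_k(\varphi)$) whereas you pass directly to the looser symmetric bound by the sum of the two $\gamma$'s; this makes no difference for the argument.
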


\begin{proof}
    Let $\varphi,\psi\in N((\underline L^k)_{k\in\N^*})$, and $\theta\in\Hameo(\Sigma)$.
    Using the triangle inequality, we have:
    \[\gamma_{\underline L^{2^{2k}}}(\varphi)\leqslant g_k(\varphi\psi)-g_k(\varphi)-g_k(\psi)\leqslant \gamma_{\underline L^{2^{2k-N}}}(\varphi)\]
    \[g_k(\varphi^{-1})=\gamma_{\underline L^{2^{2k}}}(\varphi)-\gamma_{\underline L^{2^{2k-N}}}(\varphi)-g_k(\varphi)\]
    and
    \[-(\gamma_{\underline L^{2^{2k}}}(\theta)+\gamma_{\underline L^{2^{2k-N}}}(\theta))\leqslant g_k(\theta\varphi\theta^{-1})-g_k(\varphi)\leqslant \gamma_{\underline L^{2^{2k}}}(\theta)+\gamma_{\underline L^{2^{2k-N}}}(\theta)\]
    Therefore, Lemma \ref{lem:gamma} implies that $\varphi\psi$, $\varphi^{-1}$ and $\theta\varphi\theta^{-1}$ are in $N(\Sigma)$.
\end{proof}

We claim that for a certain choice of link, $N((\underline L^k)_{k\in\N^*})$ is a proper subgroup of $\Hameo(\Sigma)$.

In fact, once again 
Proposition \ref{prop:Calabi} shows that it contains all the smooth elements.

We define a hameomorphism $T$ and a sequence of links $(\underline L^k)$ as in the previous section, with the parameter $\frac 1 2 + \frac 1 {2\sqrt{2}}< a < 1$ we fixed earlier.

We claim that $T \notin N((\underline L^k)_{k\in\N^*})$.

Let $k$ be a sufficiently large integer. Then, by the estimates of the previous section:

\begin{align*}
    2^kg_k(T)&=2^k\left(c_{\underline L^{2^{2k}}}(H)-c_{\underline L^{2^{2k-N}}}(H)\right)\\
    &=2^k\left(\left(c_{\underline L^{2^{2k}}}(H)-\int_\Sigma H\omega\right)+\left(\int_\Sigma H\omega-c_{\underline L^{2^{2k-N}}}(H)\right)\right)\\
    &\geqslant 2^k\left(\left(1-\frac g {2^{2k}}\right)\left(-2-\frac 2 {2a-1}-\frac 1 {1-a}\right)(2^{2k})^{a-1}-\frac {4^{\alpha-1}g} {(1-\alpha)2^{2k}}\right)\\
    &+2^k\left(\frac 1 {1-a}-4-\frac a {2^{2k-N}}-\frac 2 {2a-1}\right)(2^{2k-N})^{a-1}\\
    &\geqslant 2^{k(2a-1}\left(-2-\frac 2 {2a-1}-\frac 1 {1-a}+2^{N(1-a)}\left(\frac 1 {1-a}-4-\frac a {2^{2k-N}}-\frac 2 {2a-1}\right)\right)\\
    &-2^{k(2a-3)}g\left(-2-\frac 2 {2a-1}-\frac 1 {1-a}\right)-2^{-k}\frac {4^{\alpha-1}g} {1-\alpha}
\end{align*}

By definition of $N$, for $k\geqslant k_0+\frac N 2$, $-2-\frac 2 {2a-1}-\frac 1 {1-a}+2^{N(1-a)}\left(\frac 1 {1-a}-4-\frac a {2^{2k-N}}-\frac 2 {2a-1}\right)>0$, and therefore $2^kg_k(T)$ goes to infinity, which concludes the proof of Theorem \ref{thm:hameogeneral}.

Using the same argument as in the previous section, we also get:

\begin{prop}Let $(\underline L'^k)$ be an equidistributed sequence of admissible links satisfying $\diam(\underline L'^k)=\operatorname{O}\left(\frac 1 {\sqrt k}\right)$. Then, $N((\underline L'^k)_{k\in\N^*})$ is a proper normal subgroup of $\Hameo(\Sigma)$.

Moreover, taking the intersection of those subgroups over all such sequences of links, we get an even smaller proper normal subgroup $N$, which contains all smooth elements.
\end{prop}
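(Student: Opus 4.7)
The plan is to replay the argument just given for the canonical sequence $(\underline L^k)_{k\in\N^*}$, with the arbitrary $(\underline L'^k)_{k\in\N^*}$ in its place, invoking Theorem \ref{thm:mu} to transport the key numerical estimates from a convenient comparison sequence.

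First, for normality of $N((\underline L'^k)_{k\in\N^*})$ I would re-run the normality argument from the previous proposition essentially verbatim. The only ingredients are the triangle inequality for $c_{\underline L'^k}$ and Lemma \ref{lem:gamma}, both of which are valid for any admissible sequence with $\diam(\underline L'^k)=\operatorname{O}(1/\sqrt{k})$, so the three inequalities bounding $2^kg_k(\varphi\psi)-2^kg_k(\varphi)-2^kg_k(\psi)$, $2^kg_k(\varphi^{-1})+2^kg_k(\varphi)$, and $2^kg_k(\theta\varphi\theta^{-1})-2^kg_k(\varphi)$ go through unchanged.

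Next, for properness I would reuse the hameomorphism $T$ and the radial autonomous Hamiltonian $H$ constructed above, choosing the supporting disk $D$ of area $1/2$ disjoint from the common non-contractible components $\underline\alpha$ of $(\underline L'^k)$. To estimate $c_{\underline L'^k}(H)$ with sufficient precision, I would build an auxiliary admissible equidistributed sequence $(\underline L^k)$ inside $D$ with the same $\underline\alpha$ and, for each $k$, the same monotonicity constant $\lambda'^k$ as $\underline L'^k$, using the concentric annular arrangement of the previous subsection (this is possible since $k\lambda'^k\leq 1$ by Remark \ref{rk:lambda}). Since $H$ is radial we have $H^{\#n}=nH$, so $c_{\underline L^k}(H)=\mu_{\underline L^k}(H)$ and $c_{\underline L'^k}(H)=\mu_{\underline L'^k}(H)$; Theorem \ref{thm:mu} then gives $c_{\underline L^k}(H)=c_{\underline L'^k}(H)$. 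The upper and lower bounds on $c_{\underline L^k}(H)$ computed in the previous section therefore apply to $c_{\underline L'^k}(H)$ and yield $2^kg_k(T)\to\infty$, whence $T\notin N((\underline L'^k)_{k\in\N^*})$.

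Finally, for the moreover part, the intersection $N$ of all such subgroups is a normal subgroup as an intersection of normal subgroups, and it is proper because it is contained in any one $N((\underline L'^k))$. It contains every smooth element since Proposition \ref{prop:Calabi} gives $g_k(\varphi)=\operatorname{O}(2^{-k})$ for smooth $\varphi$ (using $\diam(\underline L'^k)=\operatorname{O}(1/\sqrt{k})$), so $(2^kg_k(\varphi))_k$ is bounded. The main technical point will be to check that the upper and lower bounds on $c_{\underline L^k}(H)$ written out in the previous section, which were adapted to $\lambda'^k$ close to $1/(k+1)$, remain valid for an arbitrary $\lambda'^k\in(0,1/k]$; the divergence rate depends only on the parameter $a$ and the profile of $h$ near $0$, so a suitable rescaling of the concentric annular partition of $D$ should reproduce inequalities of the same asymptotic form.
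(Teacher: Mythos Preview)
Your proposal is correct and follows essentially the same route as the paper. The paper's own argument is just the sentence ``Using the same argument as in the previous section, we also get'' together with the discussion at the end of Section~\ref{sec:proof} (the $H^{\#n}=nH$ trick, Theorem~\ref{thm:mu}, and the remark that one can adjust the monotonicity constant of the comparison sequence while keeping similar inequalities); you have unpacked exactly these steps, and you even flag the same technical point---that the concentric annular estimates must be rechecked for an arbitrary $\lambda'^k\in(0,1/k]$---which the paper also leaves as a remark rather than a computation.
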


\section{The Künneth formula for connected sums}
\label{sec:stabilization}

This section is devoted to the proof of Theorem \ref{thm:stabilization}.

\subsection{Heeagaard Floer Homology}
\label{sec:Heegaard}

Let us start by discussing how we define Heegaard Floer Homology. Indeed, we decided to use the original construction, which computes Lagrangian Floer Homology in the symmetric product. Alternatively, one could work in a cylindrical setting and define Heegaard Floer Homology by counting pseudo-holomorphic curves in the 4-manifold $\Sigma\times [0,1]\times \R$.
We believe that this cylindrical reformulation (formulated by Lipshitz in \cite{Lipshitz}) could be used to prove the statements of this paper since similar results are proved in \cite{Lipshitz} and \cite{OS2} in the cylindrical setting.
Indeed, a cylindrical reformulation of the Lagrangian link spectral invariants is considered in \cite{Chen21}, \cite{Chen22} so it is likely that the cylindrical approach together with our arguments in the previous sections can be combined to obtain Theorem \ref{thm:extensiongeneral}, \ref{thm:notsimplegeneral} and \ref{thm:hameogeneral}.
However, since our main results are inspired by \cite{CGHMSS1}, \cite{CGHMSS2} and \cite{OS}, which are all using the symmetric product setting, we will do the same. This setting is the following.

Consider a closed symplectic surface $(\Sigma,\omega)$, with a compatible complex structure $j$.
Let $\underline L = L_1\cup...\cup L_k$ be a Lagrangian link in $\Sigma$.
Denote by $\Sym \underline L$ the image of $L_1\times ... \times L_k$ in the symmetric product $\Sym^k(\Sigma) := \Sigma^k/\mathfrak S_k$.

Denote by $\pi$ the projection $\Sigma^k\to\Sym^k(\Sigma)$. The symmetric product is naturally endowed with a singular symplectic form $\Sym(\omega):=\pi_*(\omega^{\oplus k})$, which is smooth away from the diagonal $\Delta := \pi\left(\{(x_1,...,x_k),\exists i \neq j, x_i = x_j\}\right)$. It is also endowed with a complex structure $\Sym^k(j):=\pi_* j^{\times k}$.

Since the circles composing $\underline L$ are disjoint, $\Sym \underline L$ does not intersect $\Delta$. Let $V$ be a neighborhood of $\Delta$ that does not intersect $\Sym \underline L$. Then, one can find a smooth symplectic form $\omega_V$ on $\Sym^k(\Sigma)$ which agrees with $\Sym(\omega)$ away from $V$, and compatible with $\Sym^k(j)$. Then, $\Sym \underline L$ is a Lagrangian submanifold inside $(\Sym^k(\Sigma),\omega_V)$.

Let $(\underline L,\underline K)$ be a pair of Lagrangian links with $k$ components.
Let $H:S^1\times \Sigma \to \R$ be a smooth Hamiltonian. Then one can define a Hamiltonian $\Sym^k(H)$ on $\Sym^k(\Sigma)$ by the formula
$\Sym^k(H)_t(\{x_1,...,x_k\}):=H_t(x_1)+...+H_t(x_k)$.

Let $V$ be a neighborhood of $\Delta$ that does not intersect $\Sym (\varphi_{H}^t (\underline L))$, $\Sym (\varphi_{H}^t (\underline K))$ for $t\in[-1,1]$.

\begin{definition}
    An almost complex structures $J$ over $\Sym^k(\Sigma)$ is $V$-nearly symmetric if it agrees with $\Sym^k(j)$ over $V$, and tames $\Sym(\omega)$ outside of $V$.
\end{definition}

If $\Sym \underline L$ and $\Sym \underline K$ are both monotone Lagrangian submanifolds that are Hamiltonian isotopic and $\Sym (\varphi_{H}^1 (\underline L)) \pitchfork \Sym \underline K$, then given a path of $V$-nearly symmetric almost complex structures $(J_t)_{t \in [0,1]}$, one can define the Lagrangian Floer cohomology 
\[HF^*(\Sym\underline L,\Sym\underline K,\Sym^k(H))\]
in the standard way (cf. \cite[Section 6]{CGHMSS1}). One can show that this construction will not depend on the choice of $V$ and $J_t$.

In order to clarify some notations, we will recall briefly how $HF^*(L,K,H)$ is constructed for two Hamiltonian isotopic monotone Lagrangians $L$ and $K$ inside a closed monotone symplectic manifold $(M,\omega)$, and a smooth Hamiltonian $H$.


We define the space $\mathcal P(L, K)$ of smooth paths $\gamma: [0,1]\to M$ with $\gamma(0)\in L$ and $\gamma(1)\in K$.

Fix $\eta$ in $\mathcal P(L,K)$, and let $\widetilde{P}_\eta(L,K)$ be the universal cover of the connected component of $\eta$ (with base point $\eta$).

Given a Hamiltonian $H$, we can define an action functional on $\widetilde{P}_\eta(L,K)$, by :
\[\mathcal A_H([\gamma, w]):= - \int w^*\omega + \int H\left(t,\gamma (t)\right)dt\]
Here, $w$ is a homotopy from $\gamma$ to $\eta$ in $\mathcal P(L,K)$.

By the definition of $\widetilde{P}_\eta(L,K)$, two cappings $[\gamma,w]$ and $[\gamma',w']$ are isomorphic if $\gamma=\gamma'$ and $w$ and $w'$ coincide in the set $\pi_2(\eta,\gamma)$ of homotopy classes of cappings from $\eta$ to $\gamma$ with boundary in $L$ and $K$.

\begin{definition}\label{d:equivalentCap}
    Two cappings $[\gamma,w]$ and $[\gamma',w']$ are defined to be equivalent if $\gamma=\gamma'$ and $w$ and $w'$ have the same image in $\frac{\pi_2(\eta,\gamma)}{\Ker \omega}$.
\end{definition}

Let $CF^*_{\circ}(L,K,H;\eta)$ be the $\mathbb{C}$-vector space generated by the equivalent classes of critical points of the action functional $\mathcal A_H$.
It is naturally a $\mathbb{C}[T^{\pm 1}]$-module where $T$ acts by adjoining the smallest positive area disk class in $\pi_2(\eta,\eta)$ to the capping.
The Lagrangian Floer complex is 
\begin{align*}
CF^*(L,K,H)&:= \oplus_{\eta \in \pi_0(\mathcal P(L,K))} CF^*(L,K,H;\eta)\\
CF^*(L,K,H;\eta)&:=CF^*_{\circ}(L,K,H;\eta) \otimes_{\mathbb{C}[T^{\pm 1}]} \mathbb{C}[[T]][T^{-1}].
\end{align*}
One can also think of $CF^*(L,K,H)$ as a $\mathbb{C}[[T]][T^{-1}]$-vector space generated by the critical points of the circle-valued action functional on $\mathcal P(L,K)$ descended from $\mathcal A_H$. These critical points are trajectories of $\varphi_H^t$ from $L$ to $K$,
and are in one-to-one correspondance with 
$\varphi^1_H(L)\cap K$. This complex is graded by the Maslov index, and the Novikov parameter carries a grading given by the minimal Maslov number of $L$.

To define the differential, we fix a path of $\omega$-compatible almost complex structure $J_t$ on $M$. Then, given two paths $\gamma$ and $\gamma'$, and a homotopy class $\beta$ of Maslov index 1, we define the space $\widetilde{\mathcal M}_{J_t,\beta}(\gamma,\gamma')$ of smooth maps $u: \R\times [0,1] \to M$ satisfying:
\begin{itemize}
    \item $\frac{\partial u}{\partial s} + J_t \left( \frac{\partial u}{\partial t} - X_H(u)\right)=0$ (where $(s,t)\in \R\times [0,1]$);
    \item $u$ has finite energy;
    \item $u(\mathbb R,0)\subset L$ and $u(\mathbb R,1)\subset K$;
    \item $[u]=\beta$;
    \item $u$ is asymptotic to $\gamma$ at $s=-\infty$, and to $\gamma'$ at $s=+\infty$.
\end{itemize}
Denote by $\mathcal M_{J_t,\beta}(\gamma,\gamma')$ the quotient of this moduli space by the action of $\R$ by translation, and by $\pi_2(\gamma,\gamma')$ the set of homotopy classes of Floer trajectories between $\gamma$ and $\gamma'$.
The differential is then given by 
\[
\partial [\gamma',w] := \sum\limits_{[\gamma,w\#\beta]\in Crit(\mathcal A_H)}\#\mathcal M_{J_t,\beta}(\gamma,\gamma')[\gamma,w\#\beta]
\]

\subsection{Identification of the vector spaces}\label{ss:identificationvsp}

Let $\underline L$ and $\underline K$ be two transverse Lagrangian links with $k$ components on a closed surface $(\Sigma, \omega)$. Let $(E,\omega_E,j_E)$ denote the two-dimension torus with complex structure $j_E$ and K\"ahler form $\omega_E$.
Let $\alpha$ be a non-contractible circle on $E$ and $\alpha '$ be a small Hamiltonian deformation of $\alpha$, such that $\alpha$ and $\alpha '$ are transverse. Let $\sigma_1\in\Sigma\setminus (\underline L\cup\underline K)$, and $\sigma_2$ be a point in $E$ away from the isotopy between $\alpha$ and $\alpha'$.

We denote by $\Sigma'(T)$ the connected sum of $\Sigma$ and $E$ along the points $\sigma_1$ and $\sigma_2$, that we construct in the following way:

Pick small real numbers $r_1$ and $r_2$, and fix conformal identifications $\Phi_1:B_{r_1}(\sigma_1)\setminus\{\sigma_1\}\xrightarrow{\simeq} [0,\infty)\times S^1$ and $\Phi_2:B_{r_2}(\sigma_2)\setminus\{\sigma_2\}\xrightarrow{\simeq} [0,\infty)\times S^1$ (where $B_r(z)$ denotes the closed ball of radius $r$ centered at $z$).
Let $\Sigma(2T):=\Sigma\setminus \Phi_1^{-1}\left((2T,\infty)\times S^1\right)$ and $E(2T):=E\setminus \Phi_2^{-1}\left((2T,\infty)\times S^1\right)$.
Then, $\Sigma'(T)$ is the union of $\Sigma(2T)$ and $E(2T)$ modulo the identification of the cylinders $[0,2T]\times S^1\subset\Sigma(2T)$ and $[0,2T]\times S^1\subset E(2T)$ via the involution $(t,\theta)\sim (2T-t,\theta)$.
We denote the resulting complex structure on $\Sigma'(T)$ by $j'(T)$, which agrees with $j$ over $\Sigma \setminus B_{r_1}(\sigma_1)$, agrees with $j_E$ over $E \setminus B_{r_2}(\sigma_2)$ and agrees with the standard complex structure over the tube $[0,2T]\times S^1$.

We assume the Hamiltonian isotopy from $\alpha$ to $\alpha'$ is small enough such that the area of its support is less than $\omega(B_{r_1}(\sigma_1))$.
In this case, we can equip $\Sigma'(T)$ with a symplectic form $\omega'(T)$ which agrees with $\omega$ over  $\Sigma \setminus B_{r_1}(\sigma_1)$, agrees with $\omega_E$ over the support of the Hamiltonian isotopy from $\alpha$ to $\alpha'$, is compatible with $j'(T)$, and $\omega'(T)(\Sigma'(T))=\omega(\Sigma)$.

Let $W:=\{\sigma_1\}\times \Sym^{k-1}(\Sigma)\subset \Sym^k(\Sigma)$.
Let $\sigma$ be a point that lies in the same connected component of $\Sigma\setminus (\underline L\cup\underline K)$ as $\sigma_1$, but away from $B_{r_1}(\sigma_1)$.

For any $z\in\Sigma\setminus(\underline L\cup \underline K)$ and  $\varphi\in H_2(\Sym^k(\Sigma),\Sym(\underline L)\cup\Sym(\underline K))$, we denote by $n_z(\varphi)$ the intersection number of $\varphi$ with $\{z\}\times \Sym^{k-1}(\Sigma)\subset \Sym^k(\Sigma)$.
Similarly, for $z'\in\Sigma'\setminus(\underline L\cup \underline K\cup\alpha\cup\alpha')$ and $\varphi'\in H_2(\Sym^{k+1}(\Sigma'),\Sym(\underline L\cup\alpha)\cup\Sym(\underline K\cup\alpha'))$, we denote by $n'_{z'}(\varphi')$ the intersection number of  $\varphi'$ with $\{z'\}\times \Sym^{k}(\Sigma')\subset \Sym^{k+1}(\Sigma')$ where .
For $z_E\in E\setminus(\alpha\cup\alpha')$, and $\varphi_E$ in $H_2(E,\alpha\cup\alpha')$, we denote by $n^E_{z_E}(\varphi_E)$ the intersection number of $\varphi_E$ with $z_E$.

In order to prove Theorem \ref{thm:stabilization}, we start by establishing an isomorphism of vector spaces between the Floer complexes. We will show that there is a one-to-one correspondence between generators.

Given an intersection point $x=\{x_1,...,x_k\}\in\Sym^k(\Sigma)$ between $\Sym \underline L$ and $\Sym \underline K$, and an intersection point $c\in E$ between $\alpha$ and $\alpha '$, we get an intersection point $x\times\{c\}\in\Sym^k(\Sigma\setminus B_{r_1}(\sigma_1))\times \Sym^1(E\setminus B_{r_2}(\sigma_2))\subset\Sym^{k+1}(\Sigma\#_T E)$ between $\Sym (\underline L\cup \alpha)$ and $\Sym (\underline K\cup\alpha ')$.

Since $\alpha$ and $\alpha '$ do not intersect $\underline L$ and $\underline K$, any intersection point between $\Sym (\underline L\cup \alpha)$ and $\Sym (\underline K\cup\alpha ')$ can be decomposed in a single way as $x\times\{c\}$ where $x\in\Sym(\underline L)\cap\Sym(\underline K)$ and $c\in\alpha\cap\alpha '$, and therefore there is a one-to-one correspondence between $\left(\Sym(\underline L)\cap\Sym\underline K)\right)\times(\alpha\cap\alpha')$ and $\Sym (\underline L\cup \alpha)\cap\Sym (\underline K\cup\alpha ')$.

Now we need to consider the cappings. In fact, recall that a generator of the Floer complex is an equivalence class of an intersection point $x$ together with a capping $w$ (cf. Definition \ref{d:equivalentCap}).

For each connected component of $\mathcal P(\Sym \underline L,\Sym \underline K)$ that contains an intersection point of $\Sym \underline L$ and $\Sym \underline K$, we choose the reference path $\eta$ for that connected component to be a constant path at one the intersection points that are contained in that component. 
On $E$, since we assumed that $\alpha'$ is a Hamiltonian perturbation of $\alpha$, we can assume that all intersection points between them are in the same connected component of $\mathcal P(\alpha,\alpha')$. 
We choose as a reference path $\eta_E$ to be a constant path equal to an intersection point between $\alpha$ and $\alpha'$. 
For every $\eta$ chosen above, We define $\eta':= \eta\times\eta_E$, and choose it as a reference path in $\Sym^{k+1}(\Sigma')$.
Clearly, the constant path at any intersection point between $\Sym (\underline L \cup \alpha)$ and $\Sym (\underline K \cup \alpha')$ is homotopic to some $\eta'$.
Moreover, if $\eta_1$ and $\eta_2$ are two reference paths in $\mathcal P(\Sym \underline L,\Sym \underline K)$ that are not in the same connected component, then $\eta_1'= \eta_1 \times\eta_E$ and $\eta_2'=\eta_2 \times \eta_E$ do not lie in the same connected component too.

\begin{definition}
    Let $\underline L$ and $\underline K$ be two Lagrangian links on $\Sigma$ away from a point $z$. Let $x$ be a path between $\Sym\underline L$ and $\Sym\underline K$.
    A class $\varphi$ in $\pi_2(x,x)$ is said to be periodic if $n_z(\varphi)=0$. The set of periodic classes will be denoted $\Pi_x(z)$.
\end{definition}

Then, we show the following lemma (which is a generalization of \cite[Proposition 2.15]{OS}, which corresponds to the case k=g):

\begin{lemma}
    Let $(\Sigma,z)$ be a pointed surface. Let $\underline L$ and $\underline K$ be two Lagrangian links on $\Sigma$, away from $z$, with $k$ components.
    Then, for any path $x$ from $\Sym\underline L$ to $\Sym\underline K$,
    \[\pi_2(x,x)\cong \pi_2(\Sym^k(\Sigma))\oplus \Pi_x(z)\]
\end{lemma}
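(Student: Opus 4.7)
The plan is to split the short exact sequence
\[
0 \to \Pi_x(z) \to \pi_2(x,x) \xrightarrow{n_z} \mathrm{image}(n_z) \to 0
\]
using a canonical lift of sphere classes from $\Sym^k(\Sigma)$. The kernel equals $\Pi_x(z)$ by definition, and $n_z$ is a group homomorphism because intersection numbers with $V_z := \{z\} \times \Sym^{k-1}(\Sigma)$ are additive under concatenation of Whitney disks. It therefore suffices to produce a homomorphism $\iota: \pi_2(\Sym^k(\Sigma)) \to \pi_2(x,x)$ such that $n_z \circ \iota$ is an isomorphism onto $\mathrm{image}(n_z)$.

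To build $\iota$ and to see its injectivity, I would use the endpoint-evaluation fibration
\[
\Omega \Sym^k(\Sigma) \hookrightarrow \mathcal{P}(\Sym \underline L, \Sym \underline K) \xrightarrow{\mathrm{ev}_{0,1}} \Sym \underline L \times \Sym \underline K.
\]
Since the components of $\underline L$ are pairwise disjoint, the natural map $L_1 \times \cdots \times L_k \to \Sym^k(\Sigma)$ is an embedding, so $\Sym \underline L \cong T^k$ is a torus, and likewise for $\Sym \underline K$; in particular both have vanishing $\pi_2$. The long exact sequence of homotopy groups, based at $x$, then yields
\[
0 \to \pi_2(\Sym^k(\Sigma)) \xrightarrow{\iota} \pi_2(x,x) \to \pi_1(\Sym \underline L) \oplus \pi_1(\Sym \underline K) \to \pi_1(\Sym^k(\Sigma)),
\]
so $\iota$ is injective. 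Geometrically, $\iota$ is the bubbling map: attach a sphere $\sigma: S^2 \to \Sym^k(\Sigma)$ to the constant strip $u(s,t) = x(t)$ to produce a Whitney disk, and a direct computation gives $n_z(\iota([\sigma])) = \sigma \cdot V_z$.

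The final step is to identify $\pi_2(\Sym^k(\Sigma)) \cong \Z$ and to verify $n_z \circ \iota$ is an isomorphism. The Abel--Jacobi map $\Sym^k(\Sigma) \to J(\Sigma)$ is generically a $\CP^{k-g}$-bundle over the Jacobian, and since $\pi_2(J(\Sigma)) = 0$ the homotopy sequence of the bundle gives $\pi_2(\Sym^k(\Sigma)) \cong \pi_2(\CP^{k-g}) = \Z$, generated by the class $[\ell]$ of a line in a generic fiber $|L| \cong \CP^{k-g}$. The divisors in $|L|$ containing $z$ form the hyperplane $|L(-z)| \cong \CP^{k-g-1}$, which meets a generic line in exactly one point, so $n_z(\iota([\ell])) = 1$. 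This splits the short exact sequence via $\iota$, giving $\pi_2(x,x) \cong \pi_2(\Sym^k(\Sigma)) \oplus \Pi_x(z)$. The hard part is this last step: one must handle the jumping locus where $h^0(L) > k - g + 1$ and the Abel--Jacobi map is not a smooth $\CP^{k-g}$-bundle, which is why one chooses the line $[\ell]$ inside a fiber above a sufficiently generic point of the Jacobian (and the edge case $k = g$ requires identifying the generator with an exceptional $\CP^1$ above the theta divisor, as in the $k=g$ case of \cite{OS}).
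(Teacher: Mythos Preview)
Your proof is correct and follows essentially the same route as the paper: both use the endpoint-evaluation fibration $\Omega \Sym^k(\Sigma) \to \mathcal P(\Sym\underline L,\Sym\underline K) \to \Sym\underline L \times \Sym\underline K$ to inject $\pi_2(\Sym^k(\Sigma))$ into $\pi_2(x,x)$, and both use $n_z$ to split the resulting short exact sequence. The only cosmetic difference is which side you split from: the paper writes the sequence as $0 \to \pi_2(\Sym^k(\Sigma)) \to \pi_2(x,x) \to \Ker(f) \to 0$ and splits via $n_z$ (then identifies $\Ker(f) = \Pi_x(z)$), whereas you write $0 \to \Pi_x(z) \to \pi_2(x,x) \to \mathrm{image}(n_z) \to 0$ and split via $\iota$.

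One small point: the paper simply cites a reference for $\pi_2(\Sym^k(\Sigma)) \cong \Z$ (or $0$ when $k=g=1$) rather than arguing via Abel--Jacobi, and it handles the $k=g=1$ case separately since then $\pi_2(\Sym^1(E)) = \pi_2(E) = 0$ and the statement reduces to $\pi_2(x,x) = \Pi_x(z)$. Your sketch lumps all $k=g$ cases together under ``exceptional $\CP^1$ above the theta divisor,'' which is right for $g \ge 2$ but does not apply when $g=1$; you should treat that case on its own (it is trivial). The worry about the Brill--Noether jumping locus is unnecessary for computing $\pi_2$: one only needs a single spherical generator with $n_z = 1$, and that can be found in any generic fiber.
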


\begin{proof}
    $\pi_2(x,x)$ is the fundamental group of $\mathcal P(\Sym\underline L,\Sym\underline K)$ based at the point $x$. The evaluation at both ends of the path gives rise to a fibration \[\Omega \Sym^k(\Sigma)\to\mathcal P(\Sym\underline L,\Sym\underline K)\to \Sym\underline L\times\Sym\underline K\]
    The corresponding long exact sequence gives
    \[0\to\pi_2(\Sym^k(\Sigma))\to\pi_1(\mathcal P(\Sym\underline L,\Sym\underline K))\to \pi_1(\Sym\underline L\times\Sym\underline K)\xrightarrow{f}\pi_1(\Sym^k(\Sigma))\]
    One can rewrite it as a short exact sequence
    \[0\to\pi_2(\Sym^k(\Sigma))\to\pi_2(x,x)\to\Ker(f)\to 0\]
    Since $k \ge g$, we have either $\pi_2(\Sym^k(\Sigma))\cong \Z$ or $\pi_2(\Sym^k(\Sigma))=0$, and $\pi_2(\Sym^k(\Sigma))=0$ happens only when $k=g=1$ (cf. \cite[Theorem 5.4]{BR14}).
    When $\pi_2(\Sym^k(\Sigma))\cong \Z$, the map $\pi_2(x,x)\xrightarrow{n_z}\Z$ gives a splitting of the short exact sequence, and $\pi_2(x,x)\cong \pi_2(\Sym^k(\Sigma))\oplus \Ker(f)$. When $\pi_2(\Sym^k(\Sigma))=0$, we also have $\pi_2(x,x)\cong \pi_2(\Sym^k(\Sigma))\oplus \Ker(f)$.
    Since $n_z$ gives a splitting of the sequence, $\Ker(f)$ can be identified with $\Ker(n_z)=\Pi_x(z)$, which shows that $\pi_2(x,x)\cong \pi_2(\Sym^k(\Sigma))\oplus \Pi_x(z)$.
    
\end{proof}

\begin{lemma}Given an intersection point $x=\{x_1,...,x_k\}\in\Sym^k(\Sigma)$ between $\Sym \underline L$ and $\Sym \underline K$, all cappings are of the form $[x,nS\#w]$ where $S$ is a generator of $\pi_2(\Sym^k(\Sigma))$ whose intersection number with $W$ is $n_{\sigma_1}(S)=1$, $w$ is a capping from $x$ to $\eta$ that does not intersect $W$, and $n$ is an integer.
\end{lemma}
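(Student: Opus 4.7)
The plan is to leverage the splitting $\pi_2(x,x) \cong \pi_2(\Sym^k(\Sigma)) \oplus \Pi_x(\sigma_1)$ established in the previous lemma (applied with $z = \sigma_1$), and then choose a convenient reference capping. First, excluding the degenerate case $k = g = 1$ (where $\pi_2(\Sym^k(\Sigma)) = 0$ and the statement holds with $n = 0$), we have $\pi_2(\Sym^k(\Sigma)) \cong \Z$. I would fix a generator $S$ with $n_{\sigma_1}(S) = 1$; such a generator exists because $W = \{\sigma_1\} \times \Sym^{k-1}(\Sigma)$ is a smooth real codimension-$2$ divisor in $\Sym^k(\Sigma)$, and the standard rational-curve generator of $\pi_2(\Sym^k(\Sigma))$ can be chosen to meet $W$ transversely in a single point.

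Next, I would fix a reference capping $w_0$ from $\eta$ to $x$ that is disjoint from $W$. Starting from any capping $w_0'$, its intersection number $n_{\sigma_1}(w_0') \in \Z$ can be killed by subtracting an appropriate multiple of $S$, producing a capping $w_0$ with $n_{\sigma_1}(w_0) = 0$. Since the boundary of $w_0$ already lies in $\Sym\underline L \cup \Sym\underline K \subset \Sym^k(\Sigma) \setminus W$ (as $\eta$ and $x$ are constant paths at intersection points of $\Sym\underline L$ and $\Sym\underline K$, both disjoint from $\sigma_1$), transversality together with a Whitney-trick cancellation of the signed intersection pairs allows us to homotope $w_0$ rel boundary into a representative geometrically disjoint from $W$.

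For an arbitrary capping $\hat w$ of $x$, I would write $\hat w = w_0 \# \gamma$ for some $\gamma \in \pi_2(x,x)$, decompose $\gamma = nS \# \gamma'$ using the splitting with $\gamma' \in \Pi_x(\sigma_1)$, and set $w := w_0 \# \gamma'$. Then $\hat w = nS \# w$ and $n_{\sigma_1}(w) = n_{\sigma_1}(w_0) + n_{\sigma_1}(\gamma') = 0$, so applying the same Whitney-trick step to $w$ realizes it disjoint from $W$, completing the argument.

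The main obstacle I expect is the geometric disjointness step: upgrading the algebraic condition $n_{\sigma_1}(\cdot) = 0$ to an honestly disjoint representative, since $\Sym^k(\Sigma) \setminus W$ need not be simply connected and the general Whitney trick requires some care. This is, however, a standard point in Heegaard Floer theory (cf. \cite{OS}) and is facilitated here by two facts: the boundary of the capping already avoids $W$, and we only need a representative of the equivalence class from Definition \ref{d:equivalentCap}, so producing any representative with vanishing algebraic intersection with $W$ and then perturbing pair-cancelling intersections away is enough for the statement.
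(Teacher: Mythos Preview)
Your proof is correct and takes essentially the same approach as the paper: fix a reference capping $w_0$ (the paper calls it $w_x$) avoiding $W$, write any capping as $w_0$ concatenated with an element of $\pi_2$, and invoke the previous lemma's splitting $\pi_2 \cong \pi_2(\Sym^k(\Sigma)) \oplus \Pi(\sigma_1)$; the only cosmetic difference is that the paper applies the splitting at $\eta$ rather than at $x$. The Whitney-trick step you flag as the main obstacle is not actually carried out in the paper: it simply asserts that elements of $w_x \# \Pi_\eta(\sigma_1)$ ``do not intersect $W$'', and in the way the lemma is subsequently used (Lemma~\ref{l:capId} and Proposition~\ref{p:capId}) only the algebraic condition $n_{\sigma_1}=0$ and the resulting area identity are needed, so your extra care there is more than the argument requires.
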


\begin{proof}
    Fix a capping $w_x$ from $\eta$ to $x$ that does not intersect $W$. Then, $\pi_2(x,\eta)=w_x \# \pi_2(\eta,\eta)$.
    By the previous lemma, $\pi_2(\eta,\eta)\cong \pi_2(\Sym^k(\Sigma))\oplus \Pi_\eta(\sigma_1)$.
    Therefore,
    \[\pi_2(x,\eta)\cong \pi_2(\Sym^k(\Sigma))\oplus (w_x \# \Pi_\eta(\sigma_1))\]
    and elements of $\Pi_\eta(\sigma_1)\#w_x$ do not intersect $W$.
\end{proof}

Given an intersection point $c\in E$ between $\alpha$ and $\alpha '$, since $E$ is aspherical, all cappings from $c$ to $\eta_E$  do not intersect $\sigma_2$.
Moreover, since $\alpha$ and $\alpha '$ are Hamiltonian isotopic to each other, any two cappings would have the same areas and hence descend to a unique equivalence class.

\begin{lemma}\label{l:capId}
Given an intersection point $x\times\{c\}$ between $\Sym (\underline L\cup \alpha)$ and $\Sym (\underline K\cup\alpha ')$, a capping $w$ from $x$ to $\eta$ that does not intersect $W$, and $w_E$ from $c$ to $\eta_E$ that does not intersect $\sigma_2$, $w\times w_E$ is a capping from $x\times\{c\}$ to $\eta'$ such that $n'_\sigma(w\times w_E)=0$.
Moreover, all cappings from $x\times\{c\}$ to $\eta'$ are of the form $[x\times\{c\},nS'\#(w\times w_E)]$ for some integer $n$ and cappings $w$ and $w_E$ as above, and where $S'$ is a generator of $\pi_2(\Sym^{k+1}(\Sigma'))$ with $n'_\sigma(S') = 1$.
\end{lemma}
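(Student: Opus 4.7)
The first claim is geometric. After shrinking $r_1, r_2$ if necessary, one may homotope $w$ into $\Sym^k(\Sigma \setminus B_{r_1}(\sigma_1))$ and $w_E$ into $E \setminus B_{r_2}(\sigma_2)$, using that they avoid $W$ and $\sigma_2$ respectively. The product $(s,t) \mapsto (w(s,t), w_E(s,t))$ then takes values in the natural image of $\Sym^k(\Sigma(2T)) \times E(2T)$ inside $\Sym^{k+1}(\Sigma'(T))$; its boundary arcs lie in $\Sym \underline L \times \alpha \subset \Sym(\underline L \cup \alpha)$ and $\Sym \underline K \times \alpha' \subset \Sym(\underline K \cup \alpha')$, and its endpoints match $x \times \{c\}$ and $\eta' = \eta \times \eta_E$, so $w \times w_E$ is a valid capping. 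For the intersection count, note that $\sigma$ lies on the $\Sigma$-side of the neck while $w_E$ lies on the $E$-side, so any point of $w \times w_E$ in $\{\sigma\} \times \Sym^k(\Sigma')$ has its distinguished coordinate coming from $w$; hence $n'_\sigma(w \times w_E) = n_\sigma(w)$. A path from $\sigma$ to $\sigma_1$ inside $\Sigma \setminus (\underline L \cup \underline K)$ sweeps out a homology between $\{\sigma\} \times \Sym^{k-1}(\Sigma)$ and $W$ relative to $\Sym\underline L \cup \Sym\underline K$, yielding $n_\sigma(w) = n_{\sigma_1}(w) = 0$.

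For the second claim, I would apply the preceding lemma on $\Sigma'(T)$ at marked point $\sigma$ with links $\underline L \cup \alpha$ and $\underline K \cup \alpha'$, using $w \times w_E$ as the base capping (valid by the first claim since $n'_\sigma(w \times w_E) = 0$). This gives
\[
\pi_2(x \times \{c\}, \eta') \cong \pi_2(\Sym^{k+1}(\Sigma')) \oplus \bigl((w \times w_E)\,\#\, \Pi_{\eta'}(\sigma)\bigr).
\]
Since $\pi_2(\Sym^{k+1}(\Sigma'))$ is generated by $S'$ with $n'_\sigma(S') = 1$, any capping is represented by $nS'\#(w\times w_E)\# p$ for some $n \in \Z$ and $p \in \Pi_{\eta'}(\sigma)$. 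The task then reduces to showing that, modulo $\ker \omega'$, every $p \in \Pi_{\eta'}(\sigma)$ splits as $(p_\Sigma \times \eta_E) + (\eta \times p_E)$ with $p_\Sigma \in \Pi_\eta(\sigma_1)$ and $p_E \in \pi_2(\eta_E, \eta_E)$; note that $p_E$ is automatically periodic since $\pi_2(E) = 0$ allows representatives of classes in $\pi_2(\eta_E, \eta_E)$ to be taken disjoint from $\sigma_2$. Once this splitting is established, $(w \times w_E) \# p \equiv (w \# p_\Sigma) \times (w_E \# p_E) \pmod{\ker \omega'}$, and $n_{\sigma_1}(w \# p_\Sigma) = 0$ permits the modified $w$ to be isotoped off $W$ while $w_E \# p_E$ similarly avoids $\sigma_2$.

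The main obstacle is the topological splitting of periodic classes just described. To establish it, I would take a relative 2-chain representative $D$ of $p$ in $\Sym^{k+1}(\Sigma')$ with boundary on $\Sym(\underline L \cup \alpha) \cup \Sym(\underline K \cup \alpha')$, analyze its image in $\Sigma'$ under the tautological projection, and split the underlying 2-chain in $\Sigma'$ along a meridian of the neck $[0,2T] \times S^1$. The boundary components appearing on the meridian pair up symmetrically between the $\Sigma$-piece and the $E$-piece; any leftover discrepancy in the individual intersection numbers with $\{\sigma_1\} \times \Sym^{k-1}(\Sigma)$ or with $\sigma_2$ can be cancelled by adjoining multiples of $S'$ together with thin annular domains supported in the neck. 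For a suitable choice of the symplectic form $\omega'(T)$ matching $\omega, \omega_E$ away from and on the appropriate regions, these neck-supported corrections have vanishing $\omega'$-area, i.e.\ lie in $\ker \omega'$. This is the connected-sum analogue of the stabilization argument in \cite{OS}, and carrying it out carefully yields the required decomposition of $p$ modulo $\ker \omega'$, completing the proof.
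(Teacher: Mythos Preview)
Your first two paragraphs are essentially what the paper has in mind. The paper's proof of the first claim is literally ``straightforward'', and for the second claim it says only that ``the proof is identical to that of the previous lemma'', i.e.\ one applies the decomposition $\pi_2(x\times\{c\},\eta')\cong\pi_2(\Sym^{k+1}(\Sigma'))\oplus\bigl((w\times w_E)\#\Pi_{\eta'}(\sigma)\bigr)$ exactly as you do. So you have correctly identified that the only remaining content is the product splitting of periodic classes.

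Your third paragraph, however, takes an unnecessarily heavy and somewhat imprecise route. The tautological--projection--and--neck--cutting sketch is hard to make rigorous as written: the Lagrangians do not enter the neck, so there are no genuine periodic classes supported there, and the assertion that ``neck-supported corrections have vanishing $\omega'$-area'' is dubious since $\omega'$ is nondegenerate on the tube. You are also only aiming for a splitting modulo $\ker\omega'$, whereas the paper's subsequent Proposition actually uses that $\Psi:\pi_2(x,\eta)\times\pi_2(c,\eta_E)\to\pi_2(x\times\{c\},\eta')$ is a bijection on the nose.

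The clean argument, which is presumably what the paper regards as implicit in ``identical to the previous lemma'', is purely algebraic. The earlier lemma already identifies $\Pi_{\eta'}(\sigma)$ with $\Ker(f')$, where
\[
f':\pi_1\bigl(\Sym(\underline L\cup\alpha)\bigr)\times\pi_1\bigl(\Sym(\underline K\cup\alpha')\bigr)\longrightarrow \pi_1\bigl(\Sym^{k+1}(\Sigma')\bigr)\cong H_1(\Sigma').
\]
Under the connected-sum splitting $H_1(\Sigma')\cong H_1(\Sigma)\oplus H_1(E)$, each $[L_i],[K_j]$ lands in the first summand and $[\alpha],[\alpha']$ in the second, so $f'=f\oplus f_E$ and hence $\Ker(f')\cong\Ker(f)\times\Ker(f_E)\cong\Pi_\eta(\sigma_1)\times\Pi_{\eta_E}(\sigma_2)$. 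This gives the product decomposition of periodic classes directly at the level of $\pi_2$, with no need to pass to $\ker\omega'$ or to perform any geometric surgery on chains.
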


\begin{proof}
    The first part of the lemma is straightforward. The proof of the second part is identical to that of the previous lemma.
\end{proof}

\begin{prop}\label{p:capId}
    The linear map defined by
    \begin{align*}\Phi : CF^*(\Sym \underline L, \Sym\underline K)\otimes CF^*(\alpha,\alpha ') &\to CF^*(\Sym (\underline L\cup \alpha),\Sym (\underline K\cup\alpha '))\\
    [x,nS\#w]\otimes [c,w_E] &\mapsto [x\times\{c\},nS'\#(w\times w_E)]
    \end{align*}
    is an isomorphism of vector spaces.
\end{prop}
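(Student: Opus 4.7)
The plan is to verify that $\Phi$ is a well-defined bijection on generators that is also $\mathbb{C}[[T]][T^{-1}]$-linear; together these give the desired isomorphism of vector spaces.

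The bijection on generators follows by combining three facts already established in the subsection. First, the intersection points of $\Sym(\underline L\cup \alpha)$ and $\Sym(\underline K\cup \alpha')$ are in bijection with pairs $(x,c)\in (\Sym\underline L\cap \Sym\underline K)\times (\alpha\cap\alpha')$, since $\alpha,\alpha'$ are disjoint from $\underline L\cup \underline K$. Second, for any fixed such pair, the lemma preceding Lemma \ref{l:capId} parametrizes cappings of $x$ in $\Sym^k(\Sigma)$ uniquely as $nS\#w$ with $w$ avoiding $W$, and Lemma \ref{l:capId} parametrizes cappings of $x\times\{c\}$ in $\Sym^{k+1}(\Sigma')$ uniquely as $nS'\#(w\times w_E)$. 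Reading the formula defining $\Phi$ against these two parametrizations shows it is a bijection on generators, with the choice of reference path $\eta'=\eta\times\eta_E$ ensuring that distinct connected components of the relevant path spaces correspond correctly.

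For well-definedness modulo the equivalence in Definition \ref{d:equivalentCap}, I must verify that if the difference of two cappings $w,w'$ of $x$ lies in $\ker\omega\subset\pi_2(\eta,\eta)$, then the difference of $w\times w_E$ and $w'\times w_E$ lies in $\ker\omega'\subset\pi_2(\eta',\eta')$ (and symmetrically when $w_E$ is varied). Since $w,w'$ are chosen to avoid $W$, one can arrange the product $w\times w_E$ inside $\Sym^k(\Sigma(2T))\times E(2T)\subset\Sym^{k+1}(\Sigma')$, where $\omega'$ restricts to $\omega\oplus\omega_E$. This gives the area formula $\omega'((w-w')\times w_E)=\omega(w-w')=0$, as required. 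For $\mathbb{C}[[T]][T^{-1}]$-linearity, the Novikov parameter acts on the left by attaching $S\in\pi_2(\Sym^k(\Sigma))$ to the first tensor factor ($E$ being aspherical, the second factor contributes no nontrivial Novikov class), and on the right by attaching $S'\in\pi_2(\Sym^{k+1}(\Sigma'))$. A sphere representative of $S$ supported in $\Sym^k(\Sigma(2T))$, when combined with a constant factor in $E(2T)$, represents $S'$, and the construction of $\omega'$ in Theorem \ref{thm:stabilization} ensures $\omega(S)=\omega'(S')$, so the two $T$-actions are intertwined by $\Phi$.

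The main obstacle, though largely bookkeeping rather than conceptual, is tracking how the various $\pi_2$ generators and their symplectic areas transform under the connected sum. Both the identification $S\mapsto S'$ and the area additivity $\omega'(w\times w_E)=\omega(w)+\omega_E(w_E)$ for cappings disjoint from the neck rely on the fact that $\omega'$ is constructed to equal $\omega$ outside $B_{r_1}(\sigma_1)$ and $\omega_E$ on the support of the isotopy from $\alpha$ to $\alpha'$. These compatibilities are precisely why the hypotheses of Theorem \ref{thm:stabilization} were set up as they are.
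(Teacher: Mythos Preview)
Your proof is correct and follows essentially the same approach as the paper: you use the previously established bijection on intersection points, the capping parametrizations from the lemmas, and the area-additivity formula $\Sym(\omega')(w\times w_E)=\Sym(\omega)(w)+\omega_E(w_E)$ (which holds because $\omega'$ agrees with $\omega$ and $\omega_E$ away from the neck) to show the map descends to equivalence classes. Your treatment is slightly more explicit than the paper's in spelling out the $\mathbb{C}[[T]][T^{-1}]$-linearity via $\omega(S)=\omega'(S')$, but the argument is the same.
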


\begin{proof}
    We already know there is a one-to-one correspondence between $\left(\Sym(\underline L)\cap\Sym\underline K)\right)\times(\alpha\cap\alpha')$ and $\Sym (\underline L\cup \alpha)\cap\Sym (\underline K\cup\alpha ')$, and according to the previous lemma, the mapping $\Psi:\pi_2(x,\eta)\times\pi_2(c,\eta_E)\to\pi_2(x\times\{c\},\eta')$ is also a one-to-one correspondence. It remains to show that this mapping descends to a one-to-one correspondence between equivalence classes of cappings 
    \[\pi_2(x,\eta)/\Ker(\Sym(\omega))\times\pi_2(c,\eta_E)/\Ker(\omega_{E})\to\pi_2(x\times\{c\},\eta')/\Ker(\Sym(\omega'(T))).\]

First note that in the torus, $\pi_2(c,\eta_E)/\Ker(\omega_E)$ is trivial (see the paragraph before Lemma \ref{l:capId}).

Then recall that $\omega'(T)$ is chosen such that it agrees with $\omega$ over  $\Sigma \setminus B_{r_1}(\sigma_1)$, agrees with $\omega_E$ over the support of the Hamiltonian isotopy from $\alpha$ to $\alpha'$, is compatible with $j'(T)$ and $\omega'(T)(\Sigma'(T))=\omega(\Sigma)$.
    These conditions guarantee that for all $w$ and $w_E$, we have $\Sym(\omega'(T))(\Psi(w,w_E))=\Sym(\omega)(w)+\omega_E(w_E)$.
    It implies the result.
\end{proof}

To show that it is an isomorphism of chain complexes, we need to show that it preserves the differential, i.e. that for all such $x$ and $c$, $\partial(x\times\{c\})=\Phi((\partial x) \otimes c + x\otimes (\partial c))$\footnote{More precisely, we need to identify the differentials of the {\it capped} intersection points, but the identification of the cappings is straightforward so we focus on the intersection points.}.

In order to do this, we compare the moduli space $\mathcal M(x,y)$ of Maslov index $1$ Floer trajectories from $x$ to some $y$ in $\Sym^k(\Sigma)$ to $\mathcal M(x\times\{c\},y\times\{c\})$, and $\mathcal M(c,c')$ to $\mathcal M(x\times\{c\},x\times\{c'\})$.

In fact, we will show that these moduli spaces can be identified when considering a complex structure on $\Sigma '=\Sigma\#E$ that stretches sufficiently the connected sum tube.

This is a generalization of a statement in \cite{OS} which only considers the case of links with $g$ components (where $g$ is the genus of $\Sigma$), and circles $\alpha$ and $\alpha '$ with a single intersection point.
The proof of this statement still works in our setting. We will recall the main steps of this proof and emphasize where one have to be careful when generalizing.

Before discussing the moduli spaces of Floer trajectories, which are pseudo-holomorphic disks, we have to fix paths of almost complex structures on each manifold.

We fix
a path of $V$-nearly symmetric almost complex structures $(J_t)_{t \in [0,1]}$ on $\Sym^k(\Sigma)$, for some neighborhood $V$ of $\Delta \cup \Sym^{k-1}(\Sigma)\times\{\sigma_1\}\subset \Sym^k(\Sigma)$.

Recall that $j'(T)$ is the complex structure on $\Sigma'=\Sigma\#_T E$ that coincides with $j$ on $\Sigma\setminus B_{r_1}(\sigma_1)$, with $j_E$ on $E\setminus B_{r_2}(\sigma_2)$, and is the standard cylindrical complex structure on the connected sum tube $[0,2T]\times S^1$ between $\Sigma$ and $E$.

Then, the symmetric product $\Sym^{k+1}(\Sigma')$ endowed with the complex structure $\Sym^{k+1}(j'(T))$ admits an open subset holomorphically identified with
\[\Sym^k(\Sigma-B_{r_1}(\sigma_1))\times \Sym^1(E-B_{r_2}(\sigma_2))\]

Fix $R_1>r_1$ and $R_2>r_2$.
We choose a path of almost complex structures $J'_t(T)$ on $\Sym^{k+1}(\Sigma')$ satisfying the following conditions:

\begin{itemize}

    \item $J'_t(T)\equiv J_t\times j_E$ on $\Sym^k(\Sigma-B_{R_1}(\sigma_1))\times \Sym^1(E-B_{R_2}(\sigma_2))$
    
    \item $J'_t(T)=J_{t,r}\times j_E$ on $\Sym^k(\Sigma-B_{r_1}(\sigma_1))\times \Sym^1(B_{R_2}(\sigma_2)-B_{r_2}(\sigma_2))$, where $J_{t,r}$ is $V$-nearly symmetric for all $r$ and connects $J_t$ to $\Sym^k(j)$ as $r$, the normal parameter to $\sigma_2$, goes from $R_2$ to $r_2$. 
    
    \item $J'_t(T)\equiv \Sym^{k+1}(j'(T))$ on the rest of $\Sym^{k+1}(\Sigma')$ 
    
\end{itemize}

In particular, $J'_t(T)$ is $V'$-nearly symmetric for some neighborhood $V'$ of the diagonal $\Delta'\subset \Sym^{k+1}(\Sigma')$.

Let $x,y\in \Sym(\underline L)\cap\Sym(\underline K)$, and $c,c'\in\alpha\cap\alpha'$.

Given $\varphi\in\pi_2(x,y)$, there is a single class $\varphi'_c\in\pi_2(x\times\{c\},y\times\{c\})$ such that for any $z\in\Sigma\setminus(\underline L\cup \underline K)$, $n_z(\varphi)=n'_z(\varphi'_c)$.
Similarly, for any $\varphi_E\in\pi_2(c,c')$, there is a single class $\varphi'_{E,x}\in\pi_2(x\times\{c\},x\times\{c'\})$ such that for any $z_E\in E\setminus(\alpha\cup\alpha')$, $n^E_{z_E}(\varphi_E)=n'_{z_E}(\varphi'_{E,x})$.

Then, Theorem \ref{thm:stabilization} is a consequence of the following statement:

\begin{theorem}
    \label{thm:moduli}
    Let $\varphi\in\pi_2(x,y)$ and $\varphi_E\in\pi_2(c,c')$ be two classes of Maslov index $1$.
    Then, for sufficiently large $T$, $\mathcal M_{J_t,\varphi}(x,y)\simeq \mathcal M_{J'_t(T),\varphi'_c}(x\times\{c\},y\times\{c\})$ and $\mathcal M_{j_E,\varphi_E}(c,c')\simeq \mathcal M_{J'_t(T),\varphi'_{E,x}}(x\times\{c\},x\times\{c'\})$.
\end{theorem}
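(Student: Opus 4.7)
The plan is to establish the first identification; the second follows by the symmetric argument with the roles of $(\Sigma,\underline L,\underline K)$ and $(E,\alpha,\alpha')$ interchanged. I would proceed via a neck-stretching/gluing analysis in the style of the stabilization theorem of \cite{OS}, consisting of a gluing construction producing a map $\mathcal M_{J_t,\varphi}(x,y) \to \mathcal M_{J'_t(T),\varphi'_c}(x\times\{c\},y\times\{c\})$ for all $T$ large, together with a Gromov-type compactness argument showing this map is a bijection as $T \to \infty$.

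For the gluing direction, fix $u \in \mathcal M_{J_t,\varphi}(x,y)$, which is regular by genericity of $J_t$. The constant strip $v_c \equiv c$ is automatically a regular $j_E$-holomorphic strip in $(E,\alpha,\alpha')$ representing the trivial class, since $\alpha \pitchfork \alpha'$ at $c$. Using the product decomposition of $J'_t(T)$ on $\Sym^k(\Sigma \setminus B_{R_1}(\sigma_1)) \times \Sym^1(E \setminus B_{R_2}(\sigma_2))$ and the cylindrical structure on the neck, I would pre-glue $u$ and $v_c$ to an approximately $J'_t(T)$-holomorphic strip $u'_T$ in $\Sym^{k+1}(\Sigma'(T))$: wherever $u$ intersects $\{\sigma_1\}\times \Sym^{k-1}(\Sigma)$, a short tube through the neck is inserted, carrying the relevant marked point into the $E$-side factor. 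The equality $n_z(\varphi) = n'_z(\varphi'_c)$ for $z\in\Sigma$ and the vanishing of $n'_{z_E}(\varphi'_c)$ for $z_E\in E$ ensure $[u'_T] = \varphi'_c$. A standard Newton iteration / implicit function theorem argument then deforms $u'_T$ into an honest $J'_t(T)$-holomorphic strip for all sufficiently large $T$.

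For the compactness direction, suppose $u'_n$ is a sequence in $\mathcal M_{J'_t(T_n),\varphi'_c}(x\times\{c\},y\times\{c\})$ with $T_n \to \infty$. Gromov compactness in the neck-stretching limit produces a broken configuration with: a $J_t$-holomorphic component $u_\infty$ in $\Sym^k(\Sigma)$, a $j_E$-holomorphic component $v_\infty$ in $\Sym^1(E)\cong E$, and possibly cylindrical components in the neck. Since the total Maslov index is $1$ and any non-constant strip contributes index at least $1$ for generic $J$, only one component can be non-constant. The constraint $n'_{z_E}(\varphi'_c)=0$ for $z_E\in E$ combined with positivity of area forces $v_\infty$ to be the constant strip at $c$, rules out neck bubbling, and leaves $u_\infty \in \mathcal M_{J_t,\varphi}(x,y)$. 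The usual uniqueness of the gluing then identifies this with the map constructed above.

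The main obstacle will be the compactness step: rigorously ruling out unwanted degenerations such as sphere bubbling in $\Sym^{k+1}(\Sigma')$ or boundary degeneration into the diagonal $\Delta'$, and ensuring neck-stretching compactness applies even though $J'_t(T)$ is only nearly-symmetric (not honestly a product) on a neighborhood of $\Delta'$. The key tools are positivity of intersection with $\Delta'$ for $V'$-nearly symmetric almost complex structures, together with the genericity of $J_t$ and $J'_t(T)$ away from this neighborhood. This closely follows the strategy of \cite{OS}, but must be adapted to our more general setting where $\underline L, \underline K$ may have arbitrarily many components (so $\Sym^k(\Sigma)$ need not have $k=g$) and $\alpha \cap \alpha'$ may contain multiple points.
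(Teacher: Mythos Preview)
Your outline has the right two-step architecture (gluing, then compactness), but there is a genuine gap in how you handle the case $n:=n_{\sigma_1}(\varphi)>0$, and this gap appears in both steps.

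The key error is the assertion that $n'_{z_E}(\varphi'_c)=0$ for $z_E\in E$. This is false: any $z_E$ in the component of $E\setminus(\alpha\cup\alpha')$ containing the connected-sum point lies, inside $\Sigma'$, in the same component of $\Sigma'\setminus(\underline L\cup\underline K\cup\alpha\cup\alpha')$ as $\sigma$, so $n'_{z_E}(\varphi'_c)=n'_\sigma(\varphi'_c)=n_{\sigma_1}(\varphi)=n$. Consequently your compactness argument does not rule out bubbling on the $E$-side; in fact, when $n>0$ the Gromov limit \emph{necessarily} contains $n$ sphere bubbles. The paper's Lemma~\ref{l:case2} shows that the limit lives in $\Sym^{k+1}(\Sigma\vee E)=\bigcup_{i=0}^{k+1}\Sym^{k+1-i}(\Sigma)\times\Sym^i(E)$ (not just $\Sym^k(\Sigma)\times E$), and consists of a main component $u\times\{c\}$ together with $n$ spheres of the form $\{w_i\}\times v_i$ with $v_i$ a Chern-number-$2$ sphere in $\Sym^2(E)$. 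Ruling out components in the deeper strata requires an adjusted Maslov-index computation across all strata (cf.\ \eqref{eq:MaslovN}), which your outline omits entirely.

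Correspondingly, in the gluing step ``inserting a short tube through the neck'' is not what is needed when $n>0$: one must glue, at each of the $n$ points $q_i\in u^{-1}(W)$, a holomorphic sphere in $\Sym^{k-1}(\Sigma)\times\Sym^2(E)$ with $n_{\sigma_2}=1$ (the moduli of such spheres is parametrized by $\Sym^{k-1}(\Sigma)\times E$, cf.\ \cite[Lemma~10.7]{OS}), matching the asymptotic data of $u$ near $q_i$. Your approach is essentially correct only in the special case $n=0$; to handle general $\varphi$ you must incorporate these sphere bubbles on both sides of the argument, as in \cite[Sections~10.2--10.4]{OS}. Finally, the second identification is not symmetric to the first: it is strictly easier (Lemma~\ref{l:case1}), precisely because Floer strips in $E$ never meet $\sigma_2$, so $n=0$ automatically and no sphere bubbles arise.
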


\begin{remark}
    The isomorphisms above are identifications between deformation theories, and therefore $\mu(\varphi'_c)=\mu(\varphi)=1$, and $\mu(\varphi'_{E,x})=\mu(\varphi_E)=1$.
\end{remark}

The proof of this theorem consists of two steps:
\begin{itemize}
    \item Given a pseudo-holomorphic disk in $\Sym^k(\Sigma)$, we construct a corresponding disk in $\Sym^{k+1}(\Sigma')$ by gluing spheres;
    \item By a Gromov compactness argument, we show that all Maslov index $1$ pseudo-holomorphic disks in $\Sym^{k+1}(\Sigma')$ can be constructed in this way.
\end{itemize}
These two steps will be addressed in the next two subsections respectively.

\subsection{Gluing}

Let $u$ be a pseudo-holomorphic disk in $\mathcal M_{j_E,\varphi_E}(c,c')$. Then, $u$ does not intersect $\sigma_2$, and therefore $x\times u$ defines a trajectory in $\Sym^{k+1}(\Sigma')$ between $x\times\{c\}$ and $x\times\{c'\}$, which is $J'_t(T)$-holomorphic. Moreover, for any $z_E\in E\setminus(\alpha\cup\alpha')$, $n'_{z_E}(u)=n^E_{z_E}(\varphi_E)$, so $x\times u\in\mathcal M_{J'_t(T),\varphi'_{E,x}}(x\times\{c\},x\times\{c'\})$.

Let $u$ be a pseudo-holomorphic disk in $\mathcal M_{J_t,\varphi}(x,y)$. When $n_{\sigma_1}(\varphi)=0$, we can construct $u':=u\times\{c\}$, and as before it lives in $\mathcal M_{J'_t(T),\varphi'_c}(x\times\{c\},y\times\{c\})$.

However when $n:=n_{\sigma_1}(\varphi)\neq 0$, we need to glue $n$ spheres to the disk $u$ to construct a disk in $\Sym^{k+1}(\Sigma')$. We follow the construction of \cite{OS}, which was done in the case $k=g$, but still works in this more general case. We will only give an outline of the proof without going into the more technical details, which are exactly the same as in \cite[Section 10.2 and 10.3]{OS}.

Suppose $u$ meets $W=\{\sigma_1\}\times \Sym^{k-1}(\Sigma)$ transversally in $n$ distinct points $q_1,...,q_n$.
We identify $\R \times [0,1]$ with $D\setminus\{\pm i\}$, where $D$ denotes the unit disk in $\C$.
Then, $u$ extends continuously to $D$ by setting $u(-i) = x, u(i)=y$.

We fix constants $0<r_1<R_1$ such that $u(D)\cap\left(B_{r_1}(\sigma_1)\times\Sym^{k-1}(\Sigma-B_{r_1}(\sigma_1))\right)\subset B_{r_1}(\sigma_1)\times\Sym^{k-1}(\Sigma-B_{R_1}(\sigma_1))$.

There exists $\epsilon>0$ such that for every $1\leqslant i\leqslant n$, $B_\epsilon(q_i)$ is mapped by $u$ into this subset.

We fix conformal identifications $B_{r_1}(\sigma_1)-\sigma_1\cong [0,\infty)\times S^1$, and $B_\epsilon(q_i)\cong [0,\infty)\times S^1$.

We will use Sobolev spaces with weight function $e^{\delta\tau_1}$, where:
\begin{itemize}
    \item $\delta$ is a positive constant;
    \item $\tau_1:D-\{q_1,...,q_n\}\to[0,\infty)$ is supported inside the $B_\epsilon(q_i)$;
    \item $\tau_1(s,\varphi) = s$ for $s\geqslant 1$ in each $B_\epsilon(q_i)\cong [0,\infty)\times S^1$.
\end{itemize}

Then, for each $i$ there exists $w_i\in \Sym^{k-1}(\Sigma)$, and $(t_i,\theta_i)\in \R\times S^1$ such that the restriction of $u$ to $B_\epsilon(q_i)-\{q_i\}\cong [0,\infty)\times S^1$ differs by a $L^p_{1,\delta}$ map from the smooth map 
\[a_{t_i,\theta_i,w_i}:[0,\infty)\times S^1\to \Sym^{k-1}(\Sigma)\times[0,\infty)\times S^1\subset \Sym^{k}(\Sigma)\]
defined by 
\[a_{t_i,\theta_i,w_i}(s,\varphi)=(w_i,s+t_i,\varphi+\theta_i)\]
where we cut-off $s+t_i$ if it is negative (\cite[Section 10.2]{OS}).

Given $T>0$, we define $X_1(T):=\tau_1^{-1}([0,T])$ and $X_1(\infty) = D-\{q_1,...,q_n\}$.

Let $h:\R\to[0,1]$ be a smooth, increasing function such that $h(t)=0$ for $t<0$ and $h(t)=1$ for $t>1$.

We can define a map $\widetilde u_T:X_1(\infty)\to\Sym^k(\Sigma)$ which agrees with $u$ away from the $B_\epsilon(q_i)$, and defined by
\[\widetilde u_T(s,\varphi)=h(s-T)a_{t_i,\theta_i,w_i}(s,\varphi)+(1-h(s-T))u(s,\varphi)\]
over $B_\epsilon(q_i) \setminus \{q_i\} \cong [0,\infty)\times S^1$, and extends smoothly over $q_i$ (where the convex combination is to be interpreted using the exponential map).

We also fix a constant $\delta_0>0$, and define $\tau_0:\R \times [0,1] \cong D\to\R$ supported away from the $B_\epsilon(q_i)$, and such that $\tau_0(s,t)=|s|$ for sufficiently large $s$\footnote{In \cite{OS}, they use $(s,t) \in [0,1] \times \R$ and require that $\tau_0$ equals to $|t|$ for sufficiently large $t$, but we follow the more standard convention that $(s,t) \in \R \times [0,1]$.}.

Then, according to \cite[Lemma 10.6]{OS}, for the Sobolev norm with weight $e^{\delta_0\tau_0+\delta\tau_1}$, there are constants $\kappa>0$, $S_0>0$ and $C>0$ such that for all $S>S_0$
\[\vv \bar{\partial}_{J_t}\widetilde u_S\vv_{L^p_{\delta,\delta_0}(\Lambda^{0,1}\widetilde u_S)}\leqslant Ce^{-\kappa S}\]

Now we consider spheres in $\Sym^2(E)$.
Let $v$ be a holomorphic map from $S^2$ to $\Sym^{k-1}(\Sigma)\times\Sym^2(E)$, constant on the first factor, and such that $n_{\sigma_2}([v])=1$. Denote by $\mathcal M(S^2\to\Sym^{k-1}(\Sigma)\times\Sym^2(E))$ the moduli space of such maps, modulo holomorphic reparametrization.
According to \cite[Lemma 10.7]{OS}, we have:
\begin{lemma}
    For such a map $v$, there exists a unique pair $(w,c)$ in $\Sym^{k-1}(\Sigma) \times E$  such that $(w,\{c,\sigma_2\})\in\im(v)$.\footnote{In \cite{OS}, they use the notation $(w,y)$ instead of $(w,c)$. We use $y$ to denote an intersection point between $\Sym\underline L$ and $\Sym \underline K$ so we use $c$ here.} 
    
    The map $[v]\mapsto(w,c)$ is then a one-to-one correspondence between $\mathcal M(S^2\to\Sym^{k-1}(\Sigma)\times\Sym^2(E))$ and $\Sym^{k-1}(\Sigma)\times E$. 
\end{lemma}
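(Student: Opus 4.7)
The plan is to reduce the statement to a calculation inside $\Sym^2(E)$ alone, where I exploit the $\CP^1$-bundle structure coming from the Abel--Jacobi addition map on the torus $E$. Since $v$ is constant on the first factor, I write $v = (w, v_E)$ with $w \in \Sym^{k-1}(\Sigma)$ fixed and $v_E : S^2 \to \Sym^2(E)$ a holomorphic sphere satisfying $n_{\sigma_2}([v_E]) = 1$. The first real step is to invoke the addition map $\pi : \Sym^2(E) \to E$, $\{p_1, p_2\} \mapsto p_1 + p_2$, which by Riemann--Roch realizes $\Sym^2(E)$ as a holomorphic $\CP^1$-bundle over $E$: for each $q \in E$ the fiber $F_q := \pi^{-1}(q)$ parametrizes unordered pairs summing to $q$ and is a smooth rational curve. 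The composition $\pi \circ v_E : S^2 \to E$ is a holomorphic map from a simply connected Riemann surface to a complex torus, hence constant (it lifts to a bounded holomorphic map to $\C$), so $v_E$ takes values in a single fiber $F_q$ for some $q \in E$.

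Next, I observe that the divisor $\{\sigma_2\} \times E \subset \Sym^2(E)$ meets each fiber $F_q$ in the single point $\{\sigma_2, q - \sigma_2\}$ with multiplicity one; the only case deserving a brief local check in symmetric coordinates is the diagonal fiber $F_{2\sigma_2}$, where transversality still holds. Therefore $n_{\sigma_2}([v_E])$ equals the degree of $v_E$ viewed as a map $S^2 \to F_q \cong \CP^1$; by hypothesis this degree is $1$, whence $v_E$ is a biholomorphism onto $F_q$. Setting $c := q - \sigma_2$, the only point of $\im(v_E)$ containing $\sigma_2$ as a component is $\{\sigma_2, c\}$, which gives both existence and uniqueness of the pair $(w,c)$ associated to $[v]$.

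For the inverse direction, given $(w,c) \in \Sym^{k-1}(\Sigma) \times E$, set $q := c + \sigma_2$ and take the class of $(w, v_E)$ for any biholomorphism $v_E : S^2 \to F_q$; this class is well defined modulo $\operatorname{Aut}(S^2)$ because $\operatorname{Aut}(\CP^1)$ acts simply transitively on its own degree-$1$ holomorphic self-maps, so after quotienting by reparametrization of the source a single element remains. The one genuinely substantive ingredient is thus the identification of $\pi : \Sym^2(E) \to E$ as a $\CP^1$-bundle; once that is in hand, no gluing or Gromov compactness is required at this step, and the rest of the argument is formal.
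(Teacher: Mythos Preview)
Your argument is correct and self-contained. The paper does not actually prove this lemma; it simply quotes it as \cite[Lemma 10.7]{OS} and moves on. Your proof is precisely the classical one underlying that citation: the Abel--Jacobi map exhibits $\Sym^2(E)$ as a $\CP^1$-bundle over $E$, any holomorphic sphere in $\Sym^2(E)$ must land in a single fiber because $S^2$ admits no nonconstant holomorphic map to a torus, and the condition $n_{\sigma_2}=1$ forces the map to be a biholomorphism onto that fiber. So there is nothing to correct, and in fact you have supplied the content the paper leaves implicit.
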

We fix $v$ as above, and normalize it so that $v(0)=w\times\{c,\sigma_2\}$ (where we view $S^2$ as $\C\cup\{\infty\}$). 

We will only be interested in the case that $c \in \alpha \cap \alpha'$.
The intuitive reason is that we are going to glue $u \times \{ c \}:D \to \Sym^k(\Sigma) \times E$ and $n$ many $v: S^2 \to \Sym^{k-1}(\Sigma) \times \Sym^2(E)$ together (one $v$ for each $q_i$), so $c$ has to be an intersection point between $\alpha$ and $\alpha'$ for $u \times \{c\}$ to satisfy the Lagrangian boundary conditions (cf. the degeneration \eqref{eq:Degeneration} where the Gromov limit lives).
Therefore, we assume from now on that $c\neq \sigma_2$. 

We identify a neighborhood of $v(0)$ with
\[\Sym^{k-1}(\Sigma)\times B_{r_2}(\sigma_2)\times(E-B_{R_2}(\sigma_2))\subset \Sym^{k-1}(\Sigma)\times\Sym^2(E) \]
for some $0<r_2<R_2$.

Fix $\epsilon>0$ such that $B_\epsilon(0)$ is sent by $v$ to this neighborhood.
Fix conformal identifications $[0,\infty)\times S^1\cong B_\epsilon(0)-\{0\}$ and $[0,\infty)\times S^1\cong B_{r_2}(\sigma_2)-\{\sigma_2\}$.

Then, there are unique $w\in \Sym^{k-1}(\Sigma)$, $c\in E$, $(t,\theta)\in[0,\infty)\times S^1$ such that $v$ restricted to $[0,\infty)\times S^1\cong B_\epsilon(0)-\{0\}$ differs by a $L^p_{1,\delta}$ map from the map
\[b_{(t,\theta,w,y)}(s,\varphi)=(w,s+t,\theta+\varphi,c)\]
(where $L^p_{1,\delta}$ is defined with a weight function $e^\delta\tau_2$ with $\tau_2:S^2-\{0\}\to\R^+$ defined in a similar fashion as $\tau_1$).

For $S>0$, let $S^2(S):=\tau_2^{-1}([0, S])$. We define a map\footnote{In \cite{OS}, the domain of $v_S$ is $S^2-\{\infty\}$ which we believe is a typo.}
\[v_S:S^2-\{0\}\to\Sym^{k-1}(\Sigma)\times\Sym^2(E-\{\sigma_2\})\]
which agrees with $v$ over $S^2(S)$, and such that over $[0,\infty)\times S^1\cong B_\epsilon(0)-\{0\}$,
\[v_S(s,\varphi)=h(s-S)b_{(t,\theta,w,c)}(s,\varphi)+(1-h(s-S))v(s,\varphi)\]

In \cite[Definition 10.8]{OS}, the authors define a normalization condition on holomorphic spheres called being \textit{'centered'}. They show that the moduli space of centered maps $\mathcal M^{\text{cent}}(S^2\to\Sym^{k-1}(\Sigma)\times\Sym^2(E))$ is diffeomorphic to $\Sym^{k-1}(\Sigma)\times\R\times S^1\times E$ through the assignment $v\mapsto (w,t,\theta,c)$.

Denote by $v_{(w,t,\theta,c)}$ the pre-image of $(w,t,\theta,c)$ by this diffeomorphism.

Using the conformal identifications $B_{r_1}(\sigma_1)-\sigma_1\cong [0,\infty)\times S^1$ and $B_{r_2}(\sigma_2)-\sigma_2\cong [0,\infty)\times S^1$, one can think of $\Sigma'(T)$ as the union of $\Sigma(2T)$ and $E(2T)$ modulo the identification of the cylinders $[0,2T]\times S^1\subset\Sigma(2T)$ and $[0,2T]\times S^1\subset E(2T)$ via the involution $(t,\theta)\sim (2T-t,\theta)$.

Let $X_2(T):= \bigsqcup\limits_{i=1}^n S^2(T)_i$ and $X_1\cup_T  X_2$ be the union of $X_1(T)$ and $X_2(T)$ glued at their common boundary. We have that $X_1\cup_T  X_2\cong D^2$.

There exists some constant $t>0$ such that for any real numbers $S$ and $T$ such that $0<S<T-t$, given the pseudo-holomorphic disk $u\in\mathcal M_{J_s,\varphi}(x,y)$ we fixed earlier, and the intersection point $c\in \alpha \cap \alpha' \subset E$, one can define a map
\[\hat{\gamma}_c(u,S,T):D\cong X_1\cup_T  X_2\to\Sym^{k+1}(\Sigma\#_T E)\]
which agrees with $\widetilde u_S\times\{c\}$ over $X_1(T)$ and with $v_{(w_i,-t_i,\theta_i,c),S}$ on $S^2(T)_i\subset X_2(T)$.

Following \cite[Lemma 10.9]{OS}, if $S$ is sufficiently large, then for large $T$ this map is smooth, and for an appropriate Sobolev norm there are some positive constants $C$ and $a$ such that 
\[\vv\bar\partial_{J'_t(T)}\hat{\gamma}_c(u,S,T)\vv\leqslant Ce^{-aS}\]

One can show (\cite[Proposition 10.12]{OS}) that when $T$ is sufficiently large, the linearization of $\bar\partial_{J'_t(T)}$ for the spliced map from $X_1\cup_T  X_2$ admits a right inverse whose norm is bounded independent of $T$.

Then, applying the inverse theorem function (\cite[Proposition 10.14]{OS}), there is an $\epsilon > 0$ such that for sufficiently large $T$, there is a unique holomorphic curve $\gamma_c(u)$ which lies in an $\epsilon$-neighborhood of $\hat{\gamma}_c(u,S,T)$ (measured in the appropriate Sobolev norm).

This $\gamma_c(u)$ lives in $\mathcal M_{J'_t(T),\varphi'_c}(x\times\{c\},y\times\{c\})$.

\subsection{Gromov compactness}

Now we need to show that every map in $\mathcal M_{J'_t(T),\varphi'_c}(x\times\{c\},y\times\{c\})$ and $\mathcal M_{J'_t(T),\varphi'_{E,x}}(x\times\{c\},x\times\{c'\})$ can be attained by the construction of the previous section.
Once again, the argument is similar to the one in \cite{OS}. 

Let $x',y'\in\Sym^{k+1}(\Sigma')$ be two critical points of the action functional (i.e. intersection points of the Lagrangians $\Sym(\underline{L} \cup \alpha) \cap \Sym(\underline{K } \cup \alpha')$). Let $\varphi'$ be a Maslov index $1$ class in $\pi_2(x',y')$.

Then, according to \cite[Proposition 10.15]{OS}, any sequence $u_T\in\mathcal M_{J'_t(T),\varphi'}(x',y')$ with $T$ going to infinity has, up to passing to a subsequence, a Gromov limit $u_{\infty}$ mapping to 
\begin{align}\label{eq:Degeneration}
\Sym^{k+1}(\Sigma\vee E)= \bigcup\limits_{i=0}^{k+1}\Sym^{k+1-i}(\Sigma)\times\Sym^i(E).
\end{align}
We can think of the wedge sum $\Sigma\vee E$ as the degeneration of the connected sum $\Sigma\#_T E$ when the neck length $T$ goes to infinity.
The limit $u_{\infty}$ is analyzed in Lemma \ref{l:case1} and \ref{l:case2} below.

\begin{remark}
   An alternative way to think about this Gromov compactness is to consider the relative Hilbert scheme $\operatorname{Hilb}^{k+1}(\pi)$ of a Lefschetz fibration $\pi:E \to D$ over a disk $D$, where generic fibres are smooth and the singular fibre is $\Sigma\vee E$. The relative Hilbert scheme is smooth \cite[Proposition 3.7]{Perutz} and one can equip $\operatorname{Hilb}^{k+1}(\pi)$ with a one-parameter family of almost complex structures such that the projection to $D$ are pseudo-holomorphic, they are fibrewise $V$-nearly symmetric, and they agree with $J'_t(T)$ over some fibres such that $T \to \infty$ corresponds to degenerating to the central fibre.
   The central fibre of $\operatorname{Hilb}^{k+1}(\pi)$ has a canonical `cycle map' to $\Sym^{k+1}(\Sigma\vee E)$ (cf. \cite[Section 1.5.1]{Perutz}) and $u_{\infty}$ is the same as applying Gromov compactness inside $\operatorname{Hilb}^{k+1}(\pi)$ and then applying the cycle map to $\Sym^{k+1}(\Sigma\vee E)$.
\end{remark}

\begin{lemma}[cf. Proposition 10.16 of \cite{OS}]\label{l:case2}
If $(x',y')$ is of the form $(x\times\{c\},y\times\{c\})$, then $u_{\infty}$ consists of a main component of the form $u\times \{c\}$, where $u$ is a Maslov index 1 trajectory from $x$ to $y$ in $\Sym^k(\Sigma)$, together with possibly some sphere components of the form $\{w\}\times v$, where $w \in \Sym^{k-1}(\Sigma)$ and $v$ is a holomorphic sphere in $\Sym^2(E)$ with Chern number $2$ (i.e. a sphere in the ruling).
\end{lemma}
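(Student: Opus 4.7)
Following \cite[Proposition 10.16]{OS}, the proof has three parts: extract a Gromov limit, identify and decompose the main component, and analyze the sphere bubbles.

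First, applying the Gromov compactness result (Proposition 10.15 of \cite{OS}) gives a limit $u_\infty$ valued in $\Sym^{k+1}(\Sigma\vee E)$, consisting of a main disk component (inheriting the two asymptotic conditions) together with some sphere bubbles. Since both endpoints $x\times\{c\}$ and $y\times\{c\}$ lie in the stratum $\Sym^k(\Sigma)\times E$, continuity forces the main disk into the closure of this stratum. On the open part of this stratum our $J'_t(T)$ is split as $J_t \times j_E$ by construction, so the main disk decomposes as a product $u^{(1)} \times u^{(2)}$, where $u^{(1)}:D^2 \to \Sym^k(\Sigma)$ is pseudo-holomorphic with boundary on $\Sym\underline L \cup \Sym\underline K$ and endpoints $x,y$, and $u^{(2)}:D^2 \to E$ is pseudo-holomorphic with boundary on $\alpha \cup \alpha'$ with both corners at $c$.

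The key step is to show that $u^{(2)}$ is constant. Pick a generic point $z_0 \in E \setminus (\alpha \cup \alpha' \cup \{c,\sigma_2\})$ and consider the intersection number of $u_\infty$ with the pseudo-holomorphic divisor $\{z_0\} \times \Sym^{k}(\Sigma\vee E)$. By Proposition \ref{p:capId}, $\varphi'_c$ is represented by the product of a capping of $\varphi$ with the constant capping at $c$, and this representative is contained in $\Sym^k(\Sigma)\times\{c\}$, hence avoids $\{z_0\} \times \Sym^k$ as $z_0 \ne c$. So the total intersection number is $0$. By positivity of intersection for pseudo-holomorphic curves against a pseudo-holomorphic divisor, every component of $u_\infty$ contributes non-negatively, and the sum is $0$, so every individual contribution vanishes. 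In particular $n^E_{z_0}(u^{(2)}) = 0$ for all generic $z_0$; but if $u^{(2)}$ were non-constant, the open mapping theorem for holomorphic maps between $2$-manifolds would give points $z_0$ in the interior of its image with $n^E_{z_0}(u^{(2)}) \geq 1$, a contradiction. Hence $u^{(2)} \equiv c$ and the main component equals $u \times \{c\}$ with $u := u^{(1)}$.

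The same positivity argument forces every sphere bubble to have vanishing intersection with $\{z_0\} \times \Sym^k$ for generic $z_0$; combined with the stratum structure of the Gromov limit (or the cycle-map picture via the relative Hilbert scheme described in the remark after Proposition 10.15 of \cite{OS}) and the product form of $J'_t(T)$ near the node, each sphere must be of the form $\{w\} \times v$ with $w \in \Sym^{k-1}(\Sigma)$ and $v:S^2 \to \Sym^2(E)$ holomorphic. The only holomorphic sphere classes in $\Sym^2(E)$ are multiples of the ruling fiber of the Abel--Jacobi map $\Sym^2(E) \to E$, and the irreducible ones have Chern number $2$. Finally, conservation of Maslov index $\mu(\varphi'_c)=1$ together with $\mu(u^{(2)})=0$ yields $\mu(u)=1$. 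The main obstacle is the combinatorial bookkeeping for how the $k+1$ symmetric-product particles are distributed among the strata of $\Sym^{k+1}(\Sigma\vee E)$ in the limit; this is where the relative Hilbert scheme formalism of \cite{Perutz} must be invoked, essentially as in \cite[Section 10.3]{OS}, and care is needed to check that the split almost complex structure genuinely produces a product decomposition with no mixing of particles across the node in the main component.
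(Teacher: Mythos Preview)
Your positivity-of-intersection argument has a genuine gap: the claim that the total intersection number of $u_\infty$ with $\{z_0\}\times\Sym^k(\Sigma\vee E)$ vanishes is false whenever $n_{\sigma_1}(\varphi)\neq 0$, which is precisely the case where sphere bubbles occur. A representative of $\varphi$ in $\Sym^k(\Sigma)$ necessarily meets $\{\sigma_1\}\times\Sym^{k-1}(\Sigma)$ in $n_{\sigma_1}(\varphi)$ points, so ``a representative of $\varphi$ times the constant at $c$'' is not a chain in $\Sym^{k+1}(\Sigma')$ away from the nodal locus, and Proposition~\ref{p:capId} (which concerns cappings to the reference path, not classes in $\pi_2(x,y)$) does not say otherwise. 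If you compute $n'_{z_0}(\varphi'_c)$ honestly---for instance from the glued curve $\gamma_c(u)$ built in the gluing subsection, where each of the $n_{\sigma_1}(\varphi)$ ruling spheres in $\Sym^2(E)$ meets $\{z_0\}\times E$ exactly once---you get $n'_{z_0}(\varphi'_c)=n_{\sigma_1}(\varphi)$ for every $z_0\in E\setminus(\alpha\cup\alpha')$. Positivity then yields only $n^E_{z_0}(u^{(2)})+(\text{sphere contributions})=n_{\sigma_1}(\varphi)$, which neither forces $u^{(2)}$ to be constant nor forces the sphere bubbles to have zero intersection with the divisor.

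The paper's proof proceeds instead by an adjusted Maslov index count across all strata $\Sym^{k+1-i}(\Sigma)\times\Sym^i(E)$: one records intersection numbers $N_{\Sigma,i},N_{E,i},P_{\Sigma,1},P_{E,1}$ with the divisors along which adjacent strata meet, uses the matching relations $N_{\Sigma,i}=N_{E,i+1}$, and crucially inputs $P_{E,1}=0$ (the $E$-projection of any Floer strip from $c$ to $c$ misses $\sigma_2$ because the isotopy $\alpha\rightsquigarrow\alpha'$ is supported away from $\sigma_2$). Summing all adjusted indices to the total $\mu(\varphi')=1$ and using $k\geq g$ then forces $N_{\Sigma,i}=0$ for all $i$, rules out breaking of the Floer trajectory and disk bubbles, and confines the sphere bubbles to $\Sym^{k-1}(\Sigma)\times\Sym^2(E)$ with Chern number $2$. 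Your sketch does not address breaking, does not exclude bubbles in strata with $i\neq 2$, and the appeal to \cite{Perutz} at the end does not substitute for this index bookkeeping.
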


\begin{proof}
Since the $u_T$ are Floer trajectories between $x'$ and $y'$, $u_{\infty}$ consists of a (possibly) broken Floer trajectory
between $x'$ and $y'$, as well as sphere bubbles and disk bubbles.
Let $u_{\infty}^i$ be the components of $u_{\infty}$ in $\Sym^{k+1-i}(\Sigma)\times\Sym^i(E)$.
By projection to factors, we can write it as $u_{\infty}^i=(u_{\infty}^{\Sigma,i},u_{\infty}^{E,i})$.

Let $D_{\Sigma,i}=\Sym^{k-i}(\Sigma) \times \{\sigma_1\} \subset \Sym^{k+1-i}(\Sigma)$ and $D_{E,i}=\Sym^{i-1}(E) \times \{\sigma_2\} \subset \Sym^i(E)$.
We define the adjusted Maslov index $\tilde{\mu}(u_{\infty}^i)$ of $u_{\infty}^i$ relative to $D_{\Sigma,i}\times\Sym^i(E) + \Sym^{k+1-i}(\Sigma)\times D_{E,i}$ as the Maslov index of $u$ with respect to the log canonical line bundle with a simple pole along $D_{\Sigma,i}\times\Sym^i(E) + \Sym^{k+1-i}(\Sigma)\times D_{E,i}$.
In other words, the adjusted Maslov index of $u_{\infty}^i$ is its usual Maslov index viewed as a map to $\Sym^{k+1-i}(\Sigma)\times\Sym^i(E)$ subtracted by $2 [u_{\infty}^i] \cdot (D_{\Sigma,i}\times\Sym^i(E) + \Sym^{k+1-i}(\Sigma)\times D_{E,i})$.
The additivity of Maslov index under passing to a Gromov limit implies that the sum of the adjusted Maslov indices of the components of $u_{\infty}$ equals the Maslov index of $\varphi'$, which is $1$.
This is because $D_{\Sigma,i}\times\Sym^i(E) + \Sym^{k+1-i}(\Sigma)\times D_{E,i}$ is precisely the locus where $\Sym^{k+1-i}(\Sigma)\times\Sym^i(E)$ intersects other irreducible components of $\Sym^{k+1}(\Sigma\vee E)$.

Since $\Sym(\underline{L} \cup \alpha)$ and $\Sym(\underline{K } \cup \alpha')$ are contained in $\Sym^{k}(\Sigma)\times E\subset \Sym^{k+1}(\Sigma\vee E)$, the broken Floer trajectory and disk bubbles are contained in $\Sym^{k}(\Sigma)\times E$.
We denote the spherical components of $u_{\infty}^1$ by $u_{\infty,S}^1$ and the other components of $u_{\infty}^1$ by $u_{\infty,D}^1$.

Now, we analyze the adjusted Maslov indices of the spherical components of $u_{\infty}$.
Recall from \cite[Theorem 9.2]{BT01} that the rank of $\pi_2(\Sym^j(\Sigma)) \to H_2(\Sym^j(\Sigma))$ is $1$ when $j \ge 2$ or $\Sigma$ has genus $0$, and $0$ otherwise. 
Moreover, the Chern number of a spherical class $u$ is given by $(j+1-g)[u] \cdot [\Sym^{j-1}(\Sigma) \times \{\sigma_1\}]$ (cf. \cite[Remark 4.18]{CGHMSS1}).
Therefore, its adjusted Maslov index relative to $\Sym^{j-1}(\Sigma) \times \{\sigma_1\}$ is given by
\begin{align}\label{eq:MaslovN}
2c_1 \cdot [u] -2 [u] \cdot [\Sym^{j-1}(\Sigma) \times \{\sigma_1\}]=2(j-g)[u] \cdot [\Sym^{j-1}(\Sigma) \times \{\sigma_1\}]
\end{align}
The adjusted Maslov index $\tilde{\mu}(u_{\infty}^i)$ is the sum of the adjusted Maslov indices $\tilde{\mu}(u_{\infty}^{\Sigma,i})+\tilde{\mu}(u_{\infty}^{E,i})$ where the two terms in the sum are relative to $D_{\Sigma,i}\times\Sym^i(E)$ and $\Sym^{k+1-i}(\Sigma)\times D_{E,i}$, respectively. For spherical components, they can be computed by the formula \eqref{eq:MaslovN}.

For $i \neq 1$, we define 
\[
N_{\Sigma,i}:=[u_{\infty}^{\Sigma,i}] \cdot D_{\Sigma,i}, \quad N_{E,i}:=[u_{\infty}^{E,i}] \cdot D_{E,i}
\]
For $i=1$, we define
\[
N_{\Sigma,1}:=[u_{\infty,S}^{\Sigma,1}] \cdot D_{\Sigma,1}, \quad N_{E,1}:=[u_{\infty,S}^{E,1}] \cdot D_{E,1}
\]
and 
\[
P_{\Sigma,1}:=[u_{\infty,D}^{\Sigma,1}] \cdot D_{\Sigma,1}, \quad P_{E,1}:=[u_{\infty,D}^{E,1}] \cdot D_{E,1}
\]
The terms $P_{\Sigma,1}$ and $P_{E,1}$ make sense because the Lagrangian boundary condition splits as a product, and they are disjoint from the divisor $D_{\Sigma,1}$, $D_{E,1}$.
Clearly, $N_{E,0}=N_{E,1}=0$ and $N_{\Sigma,k+1}=0$ because the spherical class is trivial.
Note that $D_{\Sigma,i} \times \Sym^i(E) \subset \Sym^{k+1-i}(\Sigma)\times\Sym^i(E)$ and $\Sym^{k-i}(\Sigma) \times D_{E,i+1} \subset \Sym^{k-i}(\Sigma)\times\Sym^{i+1}(E)$ are precisely the locus where these two components of $\Sym^{k+1}(\Sigma\vee E)$ meet each other.
Therefore, we have
\begin{align}
    N_{\Sigma,i}=N_{E,i+1}
\end{align}
for $i \ge 2$, and  
\[
N_{\Sigma,1}+P_{\Sigma,1}=N_{E,2}, \quad N_{\Sigma,0}=N_{E,1}+P_{E,1}=P_{E,1}
\]

Recall that $\alpha'$ is a Hamiltonian push-off of $\alpha$ such that the Hamiltonian isotopy from $\alpha$ to $\alpha'$ does not pass through $\sigma_2$.
Therefore, any Floer trajectory between two intersection points of $\alpha$ and $\alpha'$ does not pass through $\sigma_2$ in $E$.
Also, there is no non-constant disk bubble in $E$ so $P_{E,1}=0$.
Therefore, $N_{\Sigma,0}=0$ and $u_{\infty}$ does not intersect the component $\Sym^{k+1}(\Sigma) \subset \Sym^{k+1}(\Sigma\vee E)$ at all.

The total sum of the adjusted Maslov indices of the components of $u_{\infty}$ is given by
\begin{align*}
&\tilde{\mu}(u_{\infty}^1)+\sum_{i=2}^{k+1} \tilde{\mu}(u_{\infty}^i) \\
=&\tilde{\mu}(u_{\infty,D}^1)+\tilde{\mu}(u_{\infty,S}^{\Sigma,1})+ \tilde{\mu}(u_{\infty,S}^{E,1}) +\sum_{i=2}^{k+1} \tilde{\mu}(u_{\infty}^{\Sigma,i}) +\sum_{i=2}^{k+1} \tilde{\mu}(u_{\infty}^{E,i}) \\
=&(\mu(u_{\infty,D}^1)-2P_{\Sigma,1})+2(k-g)N_{\Sigma,1}+0+\sum_{i=2}^{k+1} 2(k+1-i-g)N_{\Sigma,i} +\sum_{i=2}^{k+1} 2(i-1)N_{E,i}\\
=&\mu(u_{\infty,D}^1)-2P_{\Sigma,1}+2(k-g)N_{\Sigma,1}+\sum_{i=2}^{k} 2(k+1-i-g)N_{\Sigma,i} +2N_{E,2}+\sum_{i=2}^{k} 2iN_{E,i+1}\\
=&\mu(u_{\infty,D}^1)-2P_{\Sigma,1}+2(k-g)N_{\Sigma,1}+2(N_{\Sigma,1}+P_{\Sigma,1})+\sum_{i=2}^{k} 2(k+1-g)N_{\Sigma,i}\\
=&\mu(u_{\infty,D}^1)+2(k-g+1)N_{\Sigma,1}+\sum_{i=2}^{k} 2(k+1-g)N_{\Sigma,i}
\end{align*}
As we said earlier, this total sum has to be $1$.
By regularity, each Floer trajectory component of $u_{\infty,D}^1$ contributes at least $1$ to the Maslov index.
Any non-constant disk bubble in $u_{\infty,D}^1$ contributes at least $2$ to the Maslov index.
Since $k \ge g$, we have $(k-g+1)>0$.
Therefore, the sum is $1$ only if $N_{\Sigma,i}=0$ for all $i$, and $u_{\infty,D}^1$ consists of a single component.
It implies that $P_{\Sigma,1}=N_{E,2}$.

By genericity, we can assume that the component $u_{\infty,D}^1$ intersects $D_{\Sigma,1} \times E$ transversely.
Therefore, $u_{\infty}^2$ intersects $\Sym^{k-1}(\Sigma) \times D_{E,1}$ transversely.
Note that every component of $u_{\infty}^2$ projects to a constant in $\Sym^{k-1}(\Sigma)$ because $N_{\Sigma,2}=0$.
Since the domain has genus $0$, the bubbling is modeled on a tree and hence no component of $u_{\infty}^2$ can be a multiple cover of an underlying holomorphic sphere.
It implies that the sphere components of $u_{\infty}$ are of the form $\{w\}\times v$, where $w \in \Sym^{k-1}(\Sigma)$ and $v$ is a holomorphic sphere in $\Sym^2(E)$ with Chern number $2$.
The Floer trajectory component of $u_{\infty}$ is $u_{\infty}^1$, which goes between $x\times\{c\}$ and $y\times\{c\})$.
Therefore, it is of the form $u\times \{c\}$, where $u$ is a Maslov index 1 trajectory from $x$ to $y$ in $\Sym^k(\Sigma)$.

\end{proof}

\begin{remark}
The bubbling analysis in the proof of Lemma \ref{l:case2} provides the details of the phrase `by a dimension count' in the proof of \cite[Proposition 10.16]{OS} and at the same time confirms that we can generalize it to all $k \ge g$.
\end{remark}

\begin{lemma}\label{l:case1}
If $(x',y')$ is of the form $(x\times\{c\},x\times\{c'\})$, then $u_{\infty}$ consists of a single component and it is of a product form $\{x\}\times u$ where $u$ is a Maslov index 1 trajectory from $c$ to $c'$ in $E$.
\end{lemma}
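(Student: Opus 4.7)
The plan is to adapt the bubbling analysis of Lemma \ref{l:case2} to this endpoint configuration and then exploit the product structure on $\Sym^k(\Sigma)\times E$ together with $\pi_2(E)=0$ to further constrain the limit.

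First, I would run the same adjusted Maslov index bookkeeping as in Lemma \ref{l:case2}. Its two key inputs still hold here: $\pi_2(E)=0$ eliminates any spherical $E$-component in stratum $i=1$, and because the Hamiltonian push-off from $\alpha$ to $\alpha'$ avoids $\sigma_2$, neither the Floer strip nor its disk bubbles meet $\{\sigma_2\}$ in $E$, giving $P_{E,1}=0$. The identical algebraic manipulation yields
\[
1 = \mu(u_{\infty,D}^1) + 2(k-g+1)N_{\Sigma,1} + \sum_{i=2}^{k} 2(k+1-g)N_{\Sigma,i},
\]
which, since $k\geq g$ makes every coefficient positive and $\mu(u_{\infty,D}^1)\geq 1$, forces $N_{\Sigma,i}=0$ for all $i\geq 1$ and $u_{\infty,D}^1$ to be a single Floer strip of Maslov index $1$ with no disk bubbles attached.

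Next, since $J'_t(T)$ restricts to the product $J_t\times j_E$ on the image of $u_{\infty,D}^1\subset\Sym^k(\Sigma)\times E$, I project onto a $J_t$-holomorphic strip $u_\Sigma:\R\times[0,1]\to\Sym^k(\Sigma)$ from $x$ to $x$ and a $j_E$-holomorphic strip $u_E:\R\times[0,1]\to E$ from $c$ to $c'$. By construction, $\varphi'_{E,x}$ is the canonical lift of a class in $\pi_2(c,c')$, so it has trivial image in the $\pi_2(x,x)$-factor of the local splitting $\pi_2(x\times\{c\},x\times\{c'\})\cong\pi_2(x,x)\oplus\pi_2(c,c')$; since the surviving sphere components all have product form $\{w\}\times v$ and hence trivial $\Sigma$-class, the class $[u_\Sigma]\in\pi_2(x,x)$ vanishes. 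Because the symplectic area of a holomorphic strip depends only on its homotopy class (and is zero for the trivial class), $u_\Sigma$ has zero area and must be constant equal to $x$, hence $\mu(u_E)=1$. Finally, because $u_\Sigma\equiv x$ avoids $\Sym^{k-1}(\Sigma)\times\{\sigma_1\}$ (as $\sigma_1\notin\underline L\cup\underline K$), $P_{\Sigma,1}=0$; the relation $N_{E,2}=N_{\Sigma,1}+P_{\Sigma,1}=0$ rules out the sphere components $\{w\}\times v\subset\Sym^{k-1}(\Sigma)\times\Sym^2(E)$ permitted in Lemma \ref{l:case2}, and together with $N_{E,i}=N_{\Sigma,i-1}=0$ for $i\geq 3$, no sphere bubbles survive. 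Thus $u_\infty$ is the single component $\{x\}\times u_E$ with $u_E$ a Maslov index $1$ trajectory from $c$ to $c'$.

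The main subtlety is establishing the triviality of $[u_\Sigma]$ in $\pi_2(x,x)$. This requires a careful reading of the definition of $\varphi'_{E,x}$: one must verify that its canonical lift has vanishing $\Sigma$-intersection number $n'_z$ for every $z\in\Sigma\setminus(\underline L\cup\underline K)$, and that this vanishing is inherited by the $\Sigma$-projection of the main Gromov component, using both the splitting $\pi_2(x,x)\cong\pi_2(\Sym^k(\Sigma))\oplus\Pi_x(\sigma_1)$ and the already-established product form of the sphere components.
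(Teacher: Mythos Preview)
Your argument is correct but takes a more circuitous route than the paper's, and in doing so restricts the scope of the lemma. The paper's proof is two lines: run the analysis of Lemma~\ref{l:case2} to obtain a single Floer strip $u_{\infty,D}^1$ of Maslov index $1$, then observe that since the strip projects to a $J_t$-holomorphic strip $u_\Sigma\colon x\to x$ and a $j_E$-holomorphic strip $u_E\colon c\to c'$ with $\mu(u_\Sigma)+\mu(u_E)=1$, regularity forces $u_\Sigma$ to be constant (the $E$-factor is non-constant since $c\neq c'$, so $\mu(u_E)\geq 1$, hence $\mu(u_\Sigma)\leq 0$, hence $u_\Sigma$ is constant). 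From this $P_{\Sigma,1}=0$ and the sphere bubbles vanish exactly as you conclude. This Maslov-additivity argument uses nothing about the homotopy class $\varphi'$ beyond its index being $1$.

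Your approach instead deduces $[u_\Sigma]=0$ from the \emph{assumption} that $\varphi'=\varphi'_{E,x}$, invoking its trivial $\Sigma$-projection and the already-trivial $\Sigma$-class of the sphere components. This is valid for that specific class, but the lemma is stated (and is needed) for an arbitrary Maslov-$1$ class $\varphi'\in\pi_2(x\times\{c\},x\times\{c'\})$: to match the differentials in Theorem~\ref{thm:stabilization} one must rule out contributions from any other class with these endpoints, i.e.\ show a posteriori that every such $\varphi'$ with non-empty moduli space is some $\varphi'_{E,x}$. Your argument assumes this rather than proving it. The fix is simple: replace your homotopy-class step with the Maslov-index observation above, which gives $u_\Sigma\equiv x$ directly and then your remaining steps go through unchanged for general $\varphi'$.
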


\begin{proof}
The proof is the same as Lemma \ref{l:case2}. The only difference is that when $(x',y')$ is of the form $(x\times\{c\},x\times\{c'\})$, the Floer trajectory $u_{\infty,D}^1$ is of the form $\{x\}\times u$ where $u$ is a Maslov index 1 trajectory from $c$ to $c'$.
Therefore, we have $P_{\Sigma,1}=0$ and hence there is no sphere bubbles.




\end{proof}


Now we show that for sufficiently large $T$, any map in $\mathcal M_{J'_t(T),\varphi'}(x\times\{c\},y\times\{c\})$ is attained by the construction of the previous section.

We proceed by contradiction: suppose that there is a sequence $(T_m)$ going to infinity, and a sequence of disks $u_{T_m}\in\mathcal M_{J'_s(T_m),\varphi'}(x\times \{c\},y\times\{c\})$ that are not attained by the gluing construction.

By what precedes, we can extract a subsequence converging to a bubbletree $u_\infty$, consisting of a disk $u\times\{c\}$ and $n$ spheres $\{w_i\}\times v_i$.

Then, the authors of \cite{OS} show that for sufficiently large $m$, $u_{T_m}$ is in an $\epsilon$-neighborhood of the nearly holomorphic map $\hat\gamma_c(u,S,T_m)$ for some $0<S<T_m-t$ (for the suitable Sobolev distance).

But we showed in the previous section that there was a single $J'_t(T)$ holomorphic curve in such a neighborhood, namely the curve $\gamma_c(u,T_m)$. Therefore, for large $m$, $u_{T_m} = \gamma_c(u,T_m)$, which contradicts our assumption.

Hence by contradiction for large $T$, we have $\mathcal M_{J_s,\varphi}(x,y)\simeq \mathcal M_{J'_t(T),\varphi'_c}(x\times\{c\},y\times\{c\})$.

This concludes the proof of Theorem \ref{thm:moduli}.



\newcommand{\etalchar}[1]{$^{#1}$}
\def\cprime{$'$}

\end{document}